\newcommand{\ke}{k_{\epsilon}}
\newcommand{\pe}{p_{\epsilon}}
\newcommand{\ue}{u_{\epsilon}}
\newcommand{\se}{\theta_{\epsilon}}
\newcommand{\pK}{\partial K}
\newcommand{\pS}{\partial S}
\newcommand{\we}{w_{\epsilon}^K}
\newcommand{\wh}{w^{*,K}}
\newcommand {\wes}{w_{\epsilon}^S}
\newtheorem{remark}{Remark}[section]
\newtheorem{assumption}{\sc Assumption}[section]
\title{Expanded mixed multiscale finite element methods  and their applications for flows in porous media
\thanks{L. Jiang and J. D. Moulton acknowledge the fund by the Department of Energy at Los Alamos National Laboratory
under contracts  DE-AC52-06NA25396 and the DOE Office of Science
Advanced Computing Research (ASCR) program in Applied Mathematical Sciences. D. Copeland acknowledges the support of Award No. KUS-C1-016-04, made by
King Abdullah University of Science and Technology (KAUST).
}}
\author{L. Jiang\thanks{Applied Mathematics and Plasma Physics,
Los Alamos National Laboratory, NM 87545 ({\tt ljiang@lanl.gov}), Corresponding author.}
\and
 D.  Copeland\thanks{Department of Mathematics, Texas A\&M University,
College Station, TX, 77840 ({\tt dylancopeland@gmail.com}).}
        \and J. D.  Moulton\thanks{Applied Mathematics and Plasma Physics,
Los Alamos National Laboratory, NM 87545
 ({\tt moulton@lanl.gov}). }
}
\begin{document}

\maketitle

\begin{abstract}
We develop a family of expanded mixed Multiscale Finite Element
  Methods (MsFEMs) and their hybridizations for second-order elliptic
  equations.  This formulation expands the standard mixed Multiscale
  Finite Element formulation in the sense that four unknowns (hybrid
  formulation) are solved simultaneously: pressure, gradient of
  pressure, velocity, and Lagrange multipliers.  We use multiscale
  basis functions for both the velocity and the gradient of pressure.
  In the expanded mixed MsFEM framework, we consider both separable
  and non-separable spatial scales. Specifically, we analyze the
  methods in three categories: periodic separable scales,
  $G-$convergent separable scales, and a continuum of scales.  When
  there is no scale separation, using some global information can
  significantly improve the accuracy of the expanded mixed MsFEMs.  We
  present a rigorous convergence analysis of these methods that
  includes both conforming and nonconforming formulations.  Numerical
  results are presented for various multiscale models of flow in
  porous media with shale barriers that illustrate the efficacy of the
  proposed family of expanded mixed MsFEMs.

\end{abstract}

\begin{keywords}
  expanded mixed multiscale finite element methods, non-separable
  scales, hybridization, two-phase flows
\end{keywords}

\begin{AMS}
  65N99, 65N30, 34E13
\end{AMS}

\pagestyle{myheadings}

\thispagestyle{plain}
\markboth{L. Jiang,  D. Copeland and J. D. Moulton}{Expanded mixed MsFEMs}

\section{Introduction}

There are many fundamental and practical problems involving a wide
range of length scales. Typical examples include highly heterogeneous
porous media and composite materials with fine micro-structures.  In
practice, these scales are too fine to treat with direct numerical
simulation as the computational cost far exceeds the capabilities
of computers for the foreseeable future.  As a result, it is a
significant challenge, both theoretically and numerically, to
treat problems with multiple-scales effectively.
A variety of numerical algorithms, varying from upscaling to
multiscale methods, have been developed to capture the influence of
fine-scale spatial heterogeneity on coarse-scale properties of the
solution.  In most upscaling methods the coarse-scale model is
developed by numerically homogenizing (or averaging) the parameters of
the fine-scale model, such as permeability.  The form of the
coarse-scale model is typically assumed to be the same as the
fine-scale model.
% may differ from the underlying fine-scale model.
Simulations are performed using the coarse-scale
model, and coarse-scale quantities of interest are readily computed.
However, with this parameter upscaling, fine-scale properties of the
solution can no longer be recovered.
In contrast, multiscale methods carry fine-scale
information throughout the simulation, and the coarse-scale
equations are generally not expressed analytically, but rather formed
and solved numerically.

Various numerical multiscale approaches have been developed during the
past decade. A Multiscale Finite Element Method (MsFEM) was introduced
in \cite{hw97} and takes its origin from the pioneering work
\cite{bo83}. Its main idea is to incorporate fine-scale information
into the finite element basis functions and capture their effect on
coarse scales via finite element formulations.  In many cases, the
multiscale basis functions can be pre-computed and used repeatedly in
subsequent computations with different source terms, boundary
conditions and even slightly different coefficients. In some
situations the bases can be updated adaptively.  This leads to a large
computational saving in upscaling multi-phase flows where the pressure
equation needs to be solved many times dynamically.  There are a
number of other multiscale numerical methods with different
approaches, such as residual free bubbles \cite{brezzi00}, the variational
multiscale method \cite{hughes98}, two-scale conservative subgrid
upscaling \cite{arbogast02} and the heterogeneous multiscale method
\cite{ee03}.  Arbogast et al. \cite{apwy07} used a domain
decomposition approach and variational mixed formulation to develop a
multiscale mortar mixed MsFEM.  Jenny et al. \cite{jennylt03} have
used a set of multiscale basis functions similar to \cite{hw97} to
develop a Multiscale Finite Volume Method (MsFV).  A Multilevel
Multiscale Method \cite{lms08} was proposed in the framework of the
Mimetic Finite Difference Method that efficiently evolves a
hierarchy of coarse-scale models.  In recent years,
multiscale methods (e.g., \cite{bo09, cgh10, egw11}) have been
developed to address high-contrast in multiscale coefficients.

The mixed multiscale method was first developed by Arbogast et.al.
\cite{arbogast02} as a locally conservative subgrid upscaling method,
and was later analyzed and extended in \cite{ab06} using a variational
multiscale formulation.  Chen and Hou developed a local multiscale
basis equation for velocity and combined it with a mixed finite
element formulation to propose a mixed MsFEM \cite{ch03}.  The mixed
MsFEMs retain local conservation of mass and have been found to be
useful for solving flow equations in heterogeneous porous media and
other applications.  However, in many practical situations, the
permeability may be very low, and even vanish in some regions of the
domain (e.g., permeability in shale).  In this case its inverse is not
readily available for use in the standard mixed MsFEM formulation, and
hence, the direct application of these methods to practical problems
may be restricted.  To overcome this limitation, we propose a family
of expanded mixed MsFEMs for these multiscale applications.

Expanded Mixed Finite Element Methods (MFEMs) were studied in classic
finite element spaces in the past.  Chen \cite{chen98} developed and
analyzed expanded MFEMs for second-elliptic equations, and obtained
optimal error estimates for linear elliptic equations and certain
nonlinear equations in standard finite element spaces.  Woodward et
al. \cite{wd00} performed an error analysis of an expanded MFEM using
the lowest-order Raviart-Thomas space for Richards equation.  In
\cite{awy97} Arbogast et al. established the connection between the
expanded MFEM and a certain cell-centered finite difference methods.
A dual-dual formulation was introduced in \cite{gh01} for the expanded
MFEM with Raviart-Thomas spaces.
The expanded formulation extends the standard mixed formulation in the
sense that three variables are explicitly approximated, namely, the
scalar unknown (e.g., pressure), its gradient and the velocity.  The
expanded MFEMs are suitable for the cases where the tensor coefficient
of the underlying partial differential equations is small and even
partially vanishing inside cells, which may be viewed as an extreme
case of high-contrast of coefficients.

Porous media formations are often created by complex geological
processes and may contain materials with a widely varying ability to
transmit fluids. The permeability of the porous media may change
dramatically, from almost impermeable barriers to highly permeable
channels.  This high-contrast of the permeability creates significant
challenges for the simulation of flow through porous media. If the
simulation fails to capture the influence of these barriers (e.g.,
shales) and channels (e.g., sand lenses), errors in the quantities of
interest will likely be unacceptably large.  Moreover, if the low
permeability regions are thin with a small inter-barrier spacing, a
fully resolved fine-grid discretization may be difficult to obtain and
costly to apply.  Thus, a multiscale simulation on a coarse grid may
be necessary for these situations.  To simulate the flows on a coarse
grid when the permeability is almost vanishing in some regions, a
reduced-contrast approximation was developed in MsFEM \cite{ce10} to
lower the variance of the coefficients. In \cite{akl06}, Aarnes et al.
proposed an automatic approach to detect the barrier and iteratively
split the coarse cells around barriers to obtain improved solutions in
mixed MsFEM.  For these barrier situations, the proposed expanded
mixed MsFEM can provide an easy and direct approach to perform the
computation.

The purpose of this work is to develop a framework of expanded mixed
MsFEMs and rigorously analyze the convergence for different multiscale
cases.  The work enriches the studies on mixed MsFEMs from previous
works \cite{aej08, akl06,ch03}.  The multiscale phenomena can be
roughly classified into two categories: separable and
non-separable spatial scales.  In the case of separable scales one can
localize the computation of multiscale basis functions. However, these
approaches usually produce resonance errors  Strategies exist to reduce the
resonance errors, such as the oversampling technique introduced in
\cite{hw97}.  Recently, a new technique was proposed in \cite{gl10},
where a zero-th order term is artificially added to the standard
multiscale basis equation \cite{hw97} to make the associated Green's
function decay exponentially, and consequently, the resonance error
can be reduced.  However, these strategies for reducing the
resonance errors usually result in a nonconforming FEM, and moreover,
they do not remedy the poor performance observed when the
there is no scale separation.  Instead, when media
exhibit strong non-separable scales, some global information is needed
for representing non-local effects.  If the global information
captures all relevant information from different scales, then
resonance errors are removed and approximation accuracy is
significantly improved \cite{aej08, bo09, oz07}.  If global
information (or field) is used, we refer to the multiscale methods as
global multiscale methods; otherwise, we refer to them as local
multiscale methods.  We consider the expanded mixed MsFEM for both
separable and non-separable scales.  Multiscale basis functions
are employed for both the velocity and the gradient of the scalar.  We
present convergence analysis for three typical multiscale cases:
periodic highly-oscillatory separable scales,
$G-$convergent separable scales, and a continuum of scales.
For the expanded mixed MsFEM, we consider both
conforming multiscale bases and nonconforming/oversampling multiscale
bases.  The computation of expanded mixed MsFEM is very similar to the
standard mixed MsFEM, but the expanded mixed MsFEM gives more
information about the solution and provides an accurate approximation of
the scalar field's gradient as well. This gradient is often needed in
applications, for example, the pressure gradient is
used for flows with gravity in computing upstream directions.

We consider the proposed expanded MsFEMs in the non-hybrid (standard)
form and the corresponding hybrid form.  The hybridization of
the mixed formulation is of increasing importance to the study of
mixed methods.  Hybridization was initially devised by Fraejis de Veubeke
\cite{frae77} as an efficient implementation technique for mixed finite
elements.  Specifically, hybridization localizes the mass matrix on each
cell, and hence, enables the local elimination of the velocity to obtain a sparse
positive definite system.  Later hybridization was applied to
produce a better approximation of the scalar unknown over each cell.
For example, Arnold and Brezzi \cite{ab85} showed that using the
Lagrange multiplier unknowns introduced by hybridization in a
local post-processing procedure improved the accuracy of the scalar unknown.
In the framework of expanded mixed MsFEMs, we first show the equivalence between
each expanded mixed MsFEM and its hybridization, and then analyze the convergence
of these methods.  In particular, we obtain the convergence rates of the
Lagrange multipliers for each of the three multiscale cases.
%We develop
%a hybridization for nonconforming/oversampling expanded mixed MsFEM as well.

The rest of the paper is organized as follows.  Section 2 is devoted
to formulating the expanded mixed formulation for a model elliptic
equation in an abstract framework.  In Section 3, we present
multiscale basis functions and formulations of expanded mixed MsFEM
for separable scales and continuum scales. In Section 4, we present
convergence analysis for both expanded mixed MsFEMs and their
hybridizations. Local multiscale methods and global multiscale methods
are analyzed for the expanded mixed MsFEMs. In Section 5, the expanded
mixed MsFEMs are applied to various multiscale models of flow in
porous media to demonstrate their efficacy.  Finally, some comments
and conclusions are made.

%%%%%%%%%%%%%%%%%%%%%%%%%%%%%%%%

\section{Background}
\label{sect-background}

In this section we highlight the formulation of the expanded MFEM,
and summarize important notation.  We focus on the dual-dual formulation
of the continuum problem because it most naturally supports our analysis.
We review important results of the corresponding discretization as well.

\subsection{The continuum expanded mixed formulation}
\label{sect-cont}

Let $\Omega$ be a bounded domain in $\mathbb{R}^d$, $d=2,3$, with
Lipschitz boundary $\partial \Omega$.  For a subdomain $D$, $m\geq 0$
and $1\leq p\leq \infty$, $W^{m,p}(D)$ and $L^p(D)$ denote the usual
Sobolev space and Legesgue space, respectively.  The norm and seminorm
of $W^{m,p}$ are denoted by $\|\cdot\|_{m,p,D}$ and $|\cdot|_{m,p,D}$,
respectively.  When $p=2$, $W^{m,p}(D)$ is written as $H^{m}(D)$ with
norm $\|\cdot\|_{m, D}$ and seminorm $|\cdot|_{m,D}$.  The norm of
$L^2(D)$ is denoted by $\|\cdot\|_{0, D}$.  We also use the following
spaces
\begin{eqnarray*}
\begin{split}
H^1_0(D)&:=\{q\in H^1(D): q|_{\partial D}=0\}\\
H(div, D)&:=\{ v\in [L^2(D)]^d:  \|v\|_{div, D}:=\|v\|_{0, D} +\|div (v)\|_{0,D}< \infty \}\\
H^0(div, D)&:=\{v\in H(div, D):  v\cdot n|_{\partial D}=0\}.
\end{split}
\end{eqnarray*}
For a normed space $X$ on $\Omega$, $\|\cdot\|_{X}$ denotes the
underlying norm on $X$.  We denote $X(D)$ to be the restriction of $X$
to subdomain $D$.  In the paper, $(\cdot, \cdot)$ is the usual
$L^2(\Omega)$ inner product, and $(\cdot, \cdot)_D$ is the $L^2(D)$
inner product.

We consider the following elliptic problem
\begin{eqnarray}
\label{target-elliptic}
\begin{cases}
\begin{split}
 -div (k\nabla p) &=f(x) \quad \text{in} \quad\Omega\\
  p &=0 \quad \text{on} \quad \partial\Omega.
\end{split}
\end{cases}
\end{eqnarray}
%
%Without loss of generality, - of course there is a loss of generality!
We may assume $k$ is symmetric positive definite.
%% WE DO REALLY ASSUME k is POSITIVE DEFINTE.
Let $\theta=\nabla p$ and $u=-k\theta$.  The equation
(\ref{target-elliptic}) can be rewritten as
\begin{eqnarray}
\label{1st-order-target}
\begin{cases}
\begin{split}
u +k \theta &=0\\
\theta-\nabla p &=0\\
div(u)& =f.
\end{split}
\end{cases}
\end{eqnarray}
Here $u$ usually refers to the fluid velocity. We define the following function
spaces for solutions:
\[
X_1=[L^2(\Omega)]^d, \quad X_2=H(div, \Omega), \quad X=X_1 \times X_2,   \quad  Q=L^2(\Omega).
\]
The expanded mixed formulation of (\ref{target-elliptic})   reads:  find $(\theta, u, p)$ in $X_1 \times X_2 \times Q$,
satisfying
\begin{eqnarray}
\label{exp-fem-cont}
\begin{cases}
\begin{split}
(k\theta, \xi)+(u, \xi)&=0   \quad \forall \xi\in X_1\\
(\theta, \tau)+(p, div\tau)&=0  \quad  \forall \tau\in X_2\\
(div u, q)&=(f, q)  \quad q\in Q.
\end{split}
\end{cases}
\end{eqnarray}
We have the following theorem, which addresses the relation between the solution
of (\ref{target-elliptic}) and the solution of (\ref{exp-fem-cont}).
\begin{theorem} (See Theorem 3.5 in \cite{chen98}.)
\label{exp-elliptic-thm}
If $\big((\theta, u), p \big)\in X \times Q$ is the solution of
(\ref{exp-fem-cont}), then $p\in H_0^1(\Omega)$ is the solution of
(\ref{target-elliptic}), satisfying $\theta=\nabla p$ and $u=-k
\theta$.  Conversely, if $p\in H_0^1(\Omega)$ is the solution of
(\ref{target-elliptic}), then (\ref{exp-fem-cont}) has the solution
$\big((\theta, u), p \big)\in X \times Q$ satisfying $\theta=\nabla p$
and $u=-k \theta$.
\end{theorem}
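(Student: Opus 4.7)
The plan is to prove the two implications separately, both of which reduce to standard arguments for identifying the weak gradient, the constitutive relation, and the divergence equation from the three variational statements.

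For the forward direction, assume $((\theta,u),p) \in X \times Q$ solves (\ref{exp-fem-cont}). First I would extract the pointwise relation $u = -k\theta$ a.e.\ in $\Omega$ from the first equation by taking $\xi$ to range over all of $[L^2(\Omega)]^d$; this uses only symmetry of $L^2$ testing. Next, the third equation, with $q$ ranging over $L^2(\Omega)$, immediately yields $\mathrm{div}\,u = f$ in $L^2(\Omega)$. The subtle step is the second equation. Choosing first $\tau \in [C_c^\infty(\Omega)]^d \subset H^0(\mathrm{div},\Omega)$, integration by parts against a tentative $\nabla p$ identifies $\theta$ with the distributional gradient of $p$; since $\theta \in [L^2(\Omega)]^d$, this shows $p \in H^1(\Omega)$ and $\theta = \nabla p$. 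Then, letting $\tau$ range over all of $H(\mathrm{div},\Omega)$ and comparing with the standard Green's identity $(\nabla p,\tau)+(p,\mathrm{div}\,\tau) = \langle p,\tau\cdot n\rangle_{\partial\Omega}$, one concludes that the boundary pairing vanishes for every admissible normal trace, forcing $p|_{\partial\Omega}=0$, i.e.\ $p \in H^1_0(\Omega)$. Substituting $u = -k\nabla p$ into $\mathrm{div}\,u = f$ recovers (\ref{target-elliptic}).

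For the converse, suppose $p \in H^1_0(\Omega)$ solves (\ref{target-elliptic}) in the usual weak sense. Define $\theta := \nabla p \in [L^2(\Omega)]^d$ and $u := -k\theta \in [L^2(\Omega)]^d$ (the latter lies in $L^2$ because $k$ is bounded). Since $-\mathrm{div}(k\nabla p) = f$ holds as an $L^2$ identity (from the weak formulation of (\ref{target-elliptic}) with test functions in $C_c^\infty(\Omega)$), we obtain $\mathrm{div}\,u = f \in L^2(\Omega)$, so $u \in H(\mathrm{div},\Omega)$. The first equation in (\ref{exp-fem-cont}) is then the definition of $u$. The third equation follows directly from $\mathrm{div}\,u = f$. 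For the second equation, apply Green's formula to $\tau \in H(\mathrm{div},\Omega)$: $(\nabla p,\tau) + (p,\mathrm{div}\,\tau) = \langle p,\tau\cdot n\rangle_{\partial\Omega}$; the boundary term vanishes because $p$ has zero trace, which gives exactly $(\theta,\tau)+(p,\mathrm{div}\,\tau)=0$.

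The hard part is the second equation in the forward direction: extracting both the weak gradient identification and the homogeneous Dirichlet boundary trace simultaneously from one variational equation with test space $H(\mathrm{div},\Omega)$. I would handle it in two layers as above, first testing against compactly supported smooth fields to gain $H^1$ regularity and the identification $\theta = \nabla p$, then using the full test space and the trace theorem for $H(\mathrm{div},\Omega)$ (so that normal traces $\tau\cdot n$ span $H^{-1/2}(\partial\Omega)$) to conclude $p|_{\partial\Omega}=0$. All remaining steps are direct substitutions or applications of the fundamental lemma of the calculus of variations.
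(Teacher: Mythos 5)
Your argument is correct and complete: the first and third equations give $u=-k\theta$ and $\mathrm{div}\,u=f$ directly, and your two-layer treatment of the second equation (first $\tau\in[C_c^\infty(\Omega)]^d$ to identify $\theta$ as the distributional gradient and gain $p\in H^1(\Omega)$, then the full test space together with surjectivity of the normal trace onto $H^{-1/2}(\partial\Omega)$ to force $p|_{\partial\Omega}=0$) is exactly the right way to extract both pieces of information. The paper itself gives no proof, citing Theorem 3.5 of \cite{chen98}, and your argument is the standard one found there, so there is nothing further to compare.
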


For theoretical analysis, we introduce the operators associated with
(\ref{exp-fem-cont}).  Let $A_1: X_1 \longrightarrow X'_1$ and $B_1:
X_1\longrightarrow X'_2$, where the notation $'$ denotes the dual of a
space.  These are defined, respectively, by
\begin{eqnarray*}
\begin{split}
[A_1(\theta), \xi]&:=(k\theta, \xi),  \quad \forall \xi\in X_1 \, , \\
[B_1(\theta), v]&:=(\theta, v), \quad \forall v\in X_2 \, ,
\end{split}
\end{eqnarray*}
from which we define $A: X \longrightarrow X'$  by
\[
  [A(\theta, u), (\xi, v)]:=[A_1(\theta), \xi]+[B_1^*(u),\xi]+[B_1(\theta), v] \, ,
\]
where $B_1^*$ is the transpose operator of $B_1$.  Let $O$ denote the
null operator. Then $A$ can be rewritten as
\[
A=
\left[ \begin{array}{cc}
A_1 &  B_1^* \\
B_1 & O
\end{array} \right].
\]

Let $B: X_2\longrightarrow Q'$ and $F\in Q'$.  They are defined, respectively, by
\begin{eqnarray*}
\begin{split}
[B(u), q]&:=(div u, q),  \quad \forall q\in Q\\
[F, q]&:=(f, q),   \forall q\in Q.
\end{split}
\end{eqnarray*}
Using this operator notation, the expanded mixed formulation
(\ref{exp-fem-cont}) is equivalent to finding $\big((\theta, u), p
\big)$ in $X \times Q$ such that
\begin{equation}
\label{operator-eq}
\begin{bmatrix}
 A_1 & B_1^* & O \\
B_1 & O &  B^* \\
O & B & O
\end{bmatrix}
 \left[ \begin{array}{c} \theta \\ u \\ p \end{array} \right]
=\left[\begin{array}{c} O\\ O\\ F \end{array}\right].
\end{equation}
Equation (\ref{operator-eq}) is called a dual-dual mixed formulation
of (\ref{exp-fem-cont}) \cite{ga99}, since the operator $A$ itself has
a similar dual-type structure.  Because the operator $B$ satisfies a
continuous inf-sup condition \cite{BreFort91} and the operator $B_1$
satisfies a continuous inf-sup condition on $\ker (B)$, the system
(\ref{operator-eq}) has a unique solution $\big((\theta, u), p \big)$
in $X \times Q$. Moreover, there exists a $C>0$, independent of the
solution, such that \cite{gh01}
\begin{eqnarray}
  \|\big( (\theta, u), p)\|_{X \times Q} \leq C \|f\|_{0, \Omega} \, ,
\end{eqnarray}
where
\[
  \|\big( (\theta, u), p)\|_{X \times Q}:=\|\theta\|_{X_1}+\|u\|_{X_2}+\|p\|_{Q}:=\|\theta\|_{0, \Omega}+\|u\|_{div, \Omega}+\|p\|_{0, \Omega} \, .
\]

\subsection{The discrete expanded mixed formulation}
\label{sect-discrete}

Let $A_{1,h}$, $B_{1,h}$ and $B_h$ be approximations of operators
$A_1$, $B_1$ and $B$, respectively. Let $F_h$ be an approximation of
$F$.  Then the numerical formulation of (\ref{operator-eq}) reads
\begin{equation}
\label{operator-num-eq}
\begin{bmatrix}
 A_{1,h} & B_{1,h}^* & O \\
B_{1,h} & O &  B^*_h \\
O & B_h & O
\end{bmatrix}
 \left[ \begin{array}{c} \theta_h \\ u _h\\  p_h \end{array} \right]
=\left[\begin{array}{c} O\\ O\\ F_h \end{array}\right].
\end{equation}

Let $[\cdot , \cdot]_h$ be a numerical inner product. Let $X_{1,h}$,
$X_{2,h}$ and $Q_h$ be finite dimensional approximation of $X_1$,
$X_2$ and $Q_h$, respectively.  Set $X_h:=X_{1,h}\times X_{2,h}$.
Then we have the following abstract result for well-posedness of
(\ref{operator-num-eq}).
\begin{lemma}\cite{ga99}
\label{inf-sup-abstract-lem}
Assume that \\
(1) there exists a positive constant $C_1$ independent of $h$ such
that for any $q_h\in Q_h$,
\begin{equation}
\sup_{v_h \in X_{2,h}\setminus \{0\}} \frac{[B_h(v_h), q_h]_h}{\|v_h\|_{X_2}}\geq C_1 \|q_h\|_{Q};
\end{equation}
(2) there exists a positive constant $C_2$ independent of $h$ such
that for any $v_h \in \ker(B_h)$
\begin{equation}
  \sup_{\xi_h \in X_{1,h}\setminus \{0\}}
    \frac{[B_{1,h}(\xi_h), v_h]_h}{\|\xi_h\|_{X_1}} \geq C_2 \|v_h\|_{X_2} ;
\end{equation}
(3) there exist positive constant $C_3$ and constant $C_4$ independent
of $h$ such that
\[
   C_3 \|\theta_h\|_{X_1}^2 \leq  [A_{1,h}(\theta_h), \theta_h]_h \leq  C_4  \|\theta_h\|_{X_1}^2
   \quad \text{for} \quad \forall \theta_h\in X_{1,h}.
\]

Then there exists a unique $(\theta_h, u_h, p_h)\in X_{1,h}\times
X_{2,h} \times Q_h$ solution of (\ref{operator-num-eq}).
\end{lemma}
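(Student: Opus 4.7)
The plan is to recognize the saddle-point structure of (\ref{operator-num-eq}) at two nested levels and apply the discrete Brezzi theorem twice, once at each level. This is natural given that the operator $A$ is itself of dual-dual type, so the entire $3\times 3$ system is a saddle-point problem whose inner "flux" block is itself a saddle-point problem.

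At the outer level, group $(\theta_h, u_h)\in X_h$ as the mixed unknown and $p_h\in Q_h$ as the Lagrange-multiplier unknown. Define the block operator $\mathcal{A}_h:X_h\to X_h'$ given by the upper-left $2\times 2$ block of (\ref{operator-num-eq}), and $\mathcal{B}_h:X_h\to Q_h'$ by $\mathcal{B}_h(\xi_h,v_h)=B_h v_h$. The discrete Brezzi theorem requires (a) a uniform inf-sup condition for $\mathcal{B}_h$ and (b) invertibility of $\mathcal{A}_h$ on $\ker(\mathcal{B}_h)$. For (a), observe that $\ker(\mathcal{B}_h)=X_{1,h}\times\ker(B_h)$ and that choosing $\xi_h=0$ in the test pair gives $\|(\xi_h,v_h)\|_X=\|v_h\|_{X_2}$, so the required outer inf-sup reduces immediately to hypothesis~(1).

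For (b), I would restrict the bilinear form associated with $\mathcal{A}_h$ to $X_{1,h}\times\ker(B_h)$ and observe that it itself has saddle-point form: the $A_{1,h}$ block plays the role of an inner coercive bilinear form, while $B_{1,h}$ acts as the inner constraint operator between $X_{1,h}$ and $\ker(B_h)$. Applying the discrete Brezzi theorem a second time requires coercivity of $A_{1,h}$ on the inner constraint kernel, which is supplied (in fact overfulfilled) by hypothesis~(3) since uniform coercivity on all of $X_{1,h}$ implies coercivity on any subspace, together with the discrete inf-sup of $B_{1,h}$ from $X_{1,h}$ into the dual of $\ker(B_h)$, which is exactly hypothesis~(2). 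These two ingredients give bounded invertibility of $\mathcal{A}_h$ on $\ker(\mathcal{B}_h)$, with operator-norm bound depending only on $C_2,C_3,C_4$.

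Combining the two applications of Brezzi yields existence, uniqueness, and a stability bound for $(\theta_h,u_h,p_h)$. I expect the main technical obstacle to be the careful bookkeeping of kernels and dual pairings: one must verify that hypothesis~(2) delivers the inf-sup relative to the subspace $\ker(B_h)$ rather than to the whole of $X_{2,h}$ (so that the second Brezzi invocation is legitimate), and confirm that no compatibility condition on the data is required, because the inner right-hand side vanishes identically and only the outer datum $F_h$ needs to be paired against $\mathcal{B}_h$ via its inf-sup. Once these bookkeeping points are settled, the conclusion of the lemma follows from the quantitative stability estimates furnished by each application of Brezzi's theorem.
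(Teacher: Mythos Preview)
The paper does not supply its own proof of this lemma; it is quoted from \cite{ga99} (Gatica) and stated without argument. Your proposal is correct and is precisely the approach used in that reference: view the $3\times 3$ dual-dual system as a nested pair of saddle-point problems and apply the discrete Brezzi theorem twice, with hypothesis~(1) furnishing the outer inf-sup, hypothesis~(2) the inner inf-sup on $\ker(B_h)$, and hypothesis~(3) the coercivity of the innermost block. There is nothing to add beyond what you have outlined; the bookkeeping you flag (that hypothesis~(2) is stated relative to $\ker(B_h)$, and that the inner right-hand side is zero so no data compatibility is needed) is exactly right.
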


We note that $\|\cdot\|_{X_2}$ should be an element broken norm,
provided that $X_{2,h}$ is a nonconforming space in $X_2$, i.e,
$X_{2,h}\not\subset X_2$.  We have the following Strang-type lemma for
the problem (\ref{operator-num-eq}).
\begin{lemma}\cite{ga99}
\label{strang-lem}
Let $\big( (\theta, u), p\big)\in X\times Q$ and
$\big( (\theta_h,u_h), p_h)\in X_h \times Q_h$
be the unique solutions of (\ref{operator-eq})
and (\ref{operator-num-eq}), respectively.  Then
there exists $C=C(\|B_1\|, \|B\|, C_1, C_2, C_3, C_4)>0$,
where $C_1$, $C_2$, $C_3$ and $C_4$ are defined in Lemma
\ref{inf-sup-abstract-lem}, such that
\begin{eqnarray}
\label{strang-ineq}
\begin{split}
&\|\big( (\theta, u), p\big)- \big( (\theta_h, u_h), p_h\big)\|_{X\times Q} \leq \\
&C\big \{  \inf_{((\xi_h, v_h), q_h)\in X_h \times Q_h} \|\big( (\theta, u), p\big)- \big( (\xi_h, v_h), q_h\big)\|_{X\times Q}\\
&+\sup_{v_h\in X_{2,h}\setminus\{0\}} \frac{[-B_{1,h} (\theta)-B_{h}^*(p), v_h]_h}{\|v_h\|_{X_2}} \big\}.
\end{split}
\end{eqnarray}
\end{lemma}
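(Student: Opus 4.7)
The plan is to reduce the error estimate to a best-approximation term plus a consistency residual by combining a triangle inequality with the three discrete stability properties assumed in Lemma \ref{inf-sup-abstract-lem}. First, for any $((\xi_h, v_h), q_h) \in X_h \times Q_h$, I would split
\begin{equation*}
((\theta,u),p) - ((\theta_h,u_h),p_h) = \bigl[((\theta,u),p) - ((\xi_h,v_h),q_h)\bigr] + \bigl[((\xi_h,v_h),q_h) - ((\theta_h,u_h),p_h)\bigr]
\end{equation*}
and use the triangle inequality. The first bracket directly contributes the best-approximation term, so the work is to estimate the norm of the fully discrete second bracket, after which I would take the infimum over $((\xi_h, v_h), q_h)$.

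Next I would derive the discrete equations satisfied by the discrete difference $((\xi_h - \theta_h, v_h - u_h), q_h - p_h)$. Plugging $(\xi_h, v_h, q_h)$ into the left-hand side of (\ref{operator-num-eq}) and subtracting the identities satisfied by $(\theta_h, u_h, p_h)$ produces a discrete saddle-point system of the same form as (\ref{operator-num-eq}), whose right-hand sides are linear functionals on $X_{1,h}$, $X_{2,h}$, and $Q_h$. Using the continuous equations (\ref{operator-eq}) to rewrite these right-hand sides introduces two kinds of terms: products of approximation errors $(\theta - \xi_h)$, $(u - v_h)$, $(p - q_h)$ with the operator norms $\|B_1\|$ and $\|B\|$, which are already of best-approximation type; and the genuine consistency residual $[-B_{1,h}(\theta) - B_h^*(p), \cdot]_h$ on $X_{2,h}$, which cannot be absorbed because the continuous identity $B_1 \theta + B^* p = 0$ need not persist under the discrete operators when $X_{2,h} \not\subset X_2$ or when $[\cdot,\cdot]_h$ differs from $(\cdot,\cdot)$.

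With this discrete problem in hand, I would apply the three hypotheses of Lemma \ref{inf-sup-abstract-lem} in sequence to bound the three components of the discrete difference: the inf-sup of $B_h$ controls $\|q_h - p_h\|_Q$, the kernel-restricted inf-sup of $B_{1,h}$ controls $\|v_h - u_h\|_{X_2}$ after decomposing $v_h - u_h$ into its $\ker(B_h)$ part plus a $B_h$-preimage of $B_h(v_h - u_h)$, and the coercivity of $A_{1,h}$ controls $\|\xi_h - \theta_h\|_{X_1}$. Summing these bounds, isolating the consistency residual, and taking the infimum over $((\xi_h, v_h), q_h)$ then yields (\ref{strang-ineq}), with a constant that tracks $\|B_1\|$, $\|B\|$, and the four stability constants $C_1,\dots,C_4$, as stated.

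The main obstacle will be the careful bookkeeping of the consistency terms generated by the possibly nonconforming space $X_{2,h}$ and by the numerical inner product $[\cdot,\cdot]_h$, in particular the decision to keep the residual $B_{1,h}(\theta) + B_h^*(p)$ on the right-hand side rather than attempting to identify it with an approximation error; the delicate point is that $\theta$ lives in $X_1$ and $p$ in $Q$, so $B_{1,h}\theta$ and $B_h^* p$ must be interpreted via the extended action of the discrete operators, and no cancellation is available. Once this residual is acknowledged, the remainder reduces to a standard application of the Babu\v{s}ka--Brezzi framework to the nested dual-dual structure of $A$.
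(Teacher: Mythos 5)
The paper does not prove this lemma---it is imported verbatim from \cite{ga99}---so there is no in-paper argument to compare against. Your outline is the standard and correct Strang-type argument for the dual--dual saddle point structure: triangle inequality, discrete error equations with residuals split into approximation terms (absorbed via $\|B_1\|$, $\|B\|$) plus the single genuine consistency residual on $X_{2,h}$ (the only nonconforming space, since $X_{1,h}\subset X_1$ and $Q_h\subset Q$ make the first and third equations consistent), followed by the three stability hypotheses of Lemma \ref{inf-sup-abstract-lem} to control the three components of the discrete difference. The only loose point is the order of estimation: the pressure error $\|q_h-p_h\|_Q$ is obtained \emph{last} from the inf-sup of $B_h$ applied to the second discrete equation, since its bound requires $\|\xi_h-\theta_h\|_{X_1}$ already in hand; but this is a matter of bookkeeping, not a gap.
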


Again, $\|\cdot\|_{X_2}$ in Lemma \ref{strang-lem} should be an
element broken norm, provided that $X_{2,h}$ is a nonconforming space
in $X_2$.  The second term on the right hand side of
(\ref{strang-ineq}) is the so-called consistency error.  It is easy to
check that
\[
\sup_{v_h\in X_{2,h}\setminus\{0\}} \frac{[-B_{1,h} (\theta)-B_{h}^*(p), v_h]_h}{\|v_h\|_{X_2}} =\sup_{\tilde{v}_h\in \ker(B_h) \setminus\{0\}} \frac{[-B_1 (\theta), \tilde{v}_h]}{\|\tilde{v}_h\|_{X_2}}.
\]
Consequently, if $\ker(B_{h})\subset \ker{B}$, then
\[
\sup_{v_h\in X_{2,h}\setminus\{0\}} \frac{[-B_{1,h} (\theta)-B_{h}^*(p), v_h]_h}{\|v_h\|_{X_2}}=0,
\]
which is the case for a conforming expanded mixed FEM.

As a general remark, the generic constant $C$ is assumed to be
independent of the mesh diameter $h$ throughout the paper.

%%%%%%%%%%%%%%%%%%%%%%%%%%%%%
%%%%%%%%%%%%%%%%%%%%%%%%%%%%%
\section{Formulations of expanded mixed MsFEMs}

In this section, we describe how to construct multiscale basis functions
for expanded mixed MsFEMs.

Let $\mathfrak{T}_h$ be a quasi-uniform partition of $\Omega$ and $K$
be a representative coarse mesh with $diam(K)=h_K$.  Let $h=\max\{h_K,
K\in \mathfrak{T}_h\}$.  Let $(V_h, Q_h)$ be a classic mixed finite
element space pair  such as the Raviart-Thomas,
Brezzi-Douglas-Marini or Brezzi-Douglas-Fortin-Marini spaces
(cf. \cite{BreFort91}).  In this
section, we present multiscale basis functions for standard local
expanded mixed MsFEM, oversampling expanded mixed MsFEM, and global
expanded mixed MsFEM.

\subsection{Local expanded mixed MsFEM}
\label{local-ExMMsFEM}

Let $V_h(K):=V_h|_K$ (the finite element velocity space localized on
$K$) and $\chi^K\in V_h(K)$ be a velocity basis function defined on
$K$.  Following the mixed MsFEMs developed in \cite{ch03}, we define
the multiscale basis equation for the local expanded mixed MsFEM in
the following way:
\begin{eqnarray}
\label{local-basis}
\begin{cases}
\begin{split}
\psi_{\chi}^K+k \eta_{ \chi}^K &=0  \quad \text{in} \quad K\\
\eta_{\chi}^K -\nabla \phi_{ \chi}^K &=0 \quad \text{in} \quad K\\
div(\psi_{\chi}^K) &= div \chi^K  \quad \text{in} \quad K\\
\psi_{\chi}^K\cdot n_{\pK} &=\chi^K \cdot n_{\pK}   \quad \text{on} \quad \pK,
\end{split}
\end{cases}
\end{eqnarray}
where $n_{\pK}$ is the outward unit normal to $\pK$.  We can obtain
different mixed MsFEM basis functions corresponding to different
choices of $\chi^K$.  Chen and Hou \cite{ch03} choose $\chi^K$ to be
the lowest Raviart-Thomas basis.  The weak expanded mixed formulation
of (\ref{local-basis}) is to find $\big( (\eta_{\chi}^K,
\psi_{\chi}^K), \phi_{\chi}^K\big) \in X(K) \times Q(K)/R$ such that
\begin{eqnarray}
\label{weak-basis-eq}
\begin{cases}
\begin{split}
(k \eta_{\chi}^K, \xi) + (\psi_{\chi}^K, \xi)&=0,  \quad \forall\xi\in [L^2(K)]^d \\
(\eta_{\chi}^K, \tau)+(\phi_{\chi}^K,  div \tau)&=0,  \quad \forall \tau \in H^0(div, K)\\
(div \psi_{\chi}^K,  q)&=(div \chi^K, q),   \quad \forall q \in L^2 (K)
\end{split}
\end{cases}
\end{eqnarray}
with $\psi_{\chi}^K\cdot n_{\pK}=\chi^K \cdot n_{\pK}$ on $\pK$.
The function $\eta_{\chi}^K$ is a basis function for the gradient
variable $\theta$ and $\psi_{\chi}^K$ is a basis function for velocity
$u$.  We define finite element spaces for the local expanded mixed
MsFEM as follows:
\begin{eqnarray*}
\begin{split}
X_{1,h}^l&:=\{\theta \in [L^2(\Omega)]^d:  \theta|_K\in span\{\eta_{\chi}^K\} \quad \text{for each $K\in \mathfrak{T}_h$} \}\\
X_{2,h}^l&:=\{u \in H(div, \Omega):  u|_K\in span\{\psi_{\chi}^K\} \quad \text{for each $K\in \mathfrak{T}_h$} \}\\
X_h^l&:=X_{1,h}^l\times X_{2,h}^l.
\end{split}
\end{eqnarray*}
The finite element space for pressure in the expanded mixed MsFEM is
$Q_h$, which is the same as in the classic mixed finite element pairs.
On each coarse element $K$, the velocity basis functions are
associated with its faces. Therefore, velocity basis functions of
$X_{2,h}^l$ have the support of two coarse elements sharing a common
interface. However, the basis functions of $X_{1,h}^l$ are supported
in only one coarse element. Each basis function of $X_{1,h}^l$ is
associated with a velocity basis function via $\psi_{\chi}^K=-k
\eta_{\chi}^K$, but it is restricted to only one coarse element K.

The local expanded mixed MsFEM formulation of the global
problem (\ref{exp-fem-cont}) reads: find $\big((\theta_h^l, u_h^l), p_h^l\big)\in X_{h}^l \times
Q_h$ such that
\begin{eqnarray}
\label{exp-fem-num-local}
\begin{cases}
\begin{split}
(k\theta_h^l, \xi_h)+(u_h^l, \xi_h)&=0   \quad \forall \xi_h\in X_{1,h}^l\\
(\theta_h^l, \tau_h)+(p_h^l, div\tau_h)&=0  \quad  \forall \tau_h\in X_{2,h}^l\\
(div u_h^l, q_h)&=(f, q_h)  \quad q\in Q_h
\end{split}
\end{cases}
\end{eqnarray}
subject to the global boundary conditions.

It is well known \cite{chen98} that the linear system of equations in
the expanded mixed FEM produces an indefinite matrix, which is a
considerable source of trouble when solving the linear system.  We can
introduce Lagrange multipliers on interfaces of cells and localize
mass matrix to each element to obtain a sparse symmetric positive
definite system by elimination, which is suitable for many linear
solvers.  This will give rise to a hybrid formulation.  To this end we
require some further notations.

For the hybrid formulation we define the MsFEM velocity space to be
$\tilde{X}_{2,h}^l$, whose normal components are not necessarily
continuous on $\mathcal{F}_h$, the collection interior interfaces of
$\mathfrak{T}_h$. They are defined as follows:
\begin{eqnarray*}
\begin{split}
\tilde{X}_{2,h}^l&:=\{u \in [L^2(\Omega)]^d:  u|_K\in span\{\psi_{\chi}^K\} \quad \text{for each $K\in \mathfrak{T}_h$} \}\\
\end{split}
\end{eqnarray*}
We note that basis functions in $\tilde{X}_{2,h}^l$ do not require
normal continuity. The basis function in $\tilde{X}_{2,h}^l$ is
supported on a coarse grid, and this is different from the basis
function in $X_{2,h}^l$ whose support is two coarse cells sharing the
common face.  We note that
\[
X_{2,h}^l=\tilde{X}_{2,h}^l\cap H(div, \Omega) .
\]

The finite element space for the Lagrange multiplier is defined as
\begin{eqnarray*}
\Pi_h^l:=\big\{\pi\in L^2(\mathcal{F}_h): \pi|_{e}\in X_{2,h}^l\cdot n_{e}  \quad \text{for each  $e\in \mathcal{F}_h$}, \quad \pi|_e=0 \quad \text{for $e\subset \partial \Omega$}  \big\} .
\end{eqnarray*}

Define  the operator $C_h^l: \tilde{X}_{2,h}^l\longrightarrow (\Pi_h^l)'$ by
\begin{equation}
\label{def-C}
[C_h^l(\tau_h), \pi_h]=\sum_{K} (\tau_h\cdot n_{\pK}, \pi_h)_{\pK},  \quad  \forall \tau_h\in \tilde{X}_{2,h}^l, \pi_h\in \Pi_h^l.
\end{equation}
The hybridization of local expanded mixed MsFEM formulation of
(\ref{exp-fem-num-local}) reads: find $(\bar{\theta}_h^l, \bar{u}_h^l,
\bar{p}_h^l, \lambda_h^l )\in X_{h}^l \times \tilde{X}_{2,h}^l \times
Q_h\times \Pi_h^l$ such that
\begin{eqnarray}
\label{exp-fem-hybrid-local}
\begin{cases}
\begin{split}
(k\bar{\theta}_h^l, \xi_h)+(\bar{u}_h^l, \xi_h)&=0   \quad \forall \xi_h\in X_{1,h}^l\\
(\bar{\theta}_h^l, \tau_h)+\sum_K (\bar{p}_h^l, div\tau_h)&=[C_h^l(\tau_h), \lambda_h^l]  \quad  \forall \tau_h\in \tilde{X}_{2,h}^l\\
\sum_K (div \bar{u}_h^l, q_h)&=(f, q_h)  \quad q\in Q_h\\
[C_h^l(\bar{u}_h^l), \pi_h]&=0   \quad \pi_h\in \Pi_h^l.
\end{split}
\end{cases}
\end{eqnarray}

%%%%%%%%%%%%%%%
\subsection{Oversampling expanded mixed MsFEM}
\label{os-ExMMsFEM}

If the equation (\ref{local-basis}) is solved in a block $S$ larger
than $K$, and the interior information of the solution is taken to
construct the basis functions in $K$, then this results in the
oversampling technique introduced in \cite{hw97, ch03}.  We note that
oversampling MsFEM is a modified local MsFEM.  Using the oversampling
can reduce resonance error.

Here we follow the outline in \cite{ch03} to present the oversampling
multiscale basis functions.  Let $\big ( (\eta_{\chi}^S,
\psi_{\chi}^S), \phi^S_{\chi}\big)\in X(S) \times Q(S)/R$ be the
solution of the equation
\begin{eqnarray}
\label{os-basis}
\begin{cases}
\begin{split}
 \psi_{\chi}^S+k \eta_{ \chi}^S &=0  \quad \text{in} \quad S\\
\eta_{\chi}^S -\nabla \phi_{ \chi}^S &=0 \quad \text{in} \quad S\\
div(\psi_{\chi}^S) &= div \chi^S  \quad \text{in} \quad S\\
\psi_{\chi}^S\cdot n_{\pK} &=\chi^S \cdot n_{\pK}   \quad \text{on} \quad \partial S.
\end{split}
\end{cases}
\end{eqnarray}
Let $j$ and $l$ be indices of faces of $K$.   We define
\begin{equation}
\label{def-bar-psi}
\bar{\psi}_{\chi_j}^K:=\sum_{l} c_{jl} \psi_{\chi_l}^S|_K,  \quad \bar{\eta}_{\chi_j}^K:=\sum_{l} c_{jl} \eta_{\chi_l}^S|_K,
\end{equation}
where the constants $c_{jl}$ are chosen such that
\begin{equation}
\label{const-c-os}
\chi_j^K=\sum_l c_{jl}\chi_l^S|_K.
\end{equation}
We define
\begin{eqnarray*}
\tilde{X}_{2,h}^{os}&:=\{u \in [L^2(\Omega)]^d:  u|_K\in span\{\bar{\psi}_{\chi}^K\} \quad \text{for each $K\in \mathfrak{T}_h$} \}.
\end{eqnarray*}

We introduce an operator $\mathcal{M}_h: \tilde{X}_{2,h}^{os}
\longrightarrow V_h$ whose local form is defined by
\begin{equation}
\label{def-Mh-os}
\mathcal{M}_h|_K \big(\sum_j c_j \bar{\psi}_{\chi_j}^K\big) =\sum_j c_j \chi_j^K.
\end{equation}
Then we define finite element spaces for the oversampling expanded
mixed MsFEM as follows:
\begin{eqnarray*}
\begin{split}
X_{1,h}^{os}&:=\{\theta \in [L^2(\Omega)]^d:  \theta|_K\in span\{\bar{\eta}_{\chi}^K\} \quad \text{for each $K\in \mathfrak{T}_h$} \}\\
X_{2,h}^{os}&:=\{u_h \in \tilde{X}_{2,h}^{os}:  \mathcal{M}_h u_h \in V_h \subset H(div, \Omega) \}.
\end{split}
\end{eqnarray*}
Here, $\mathcal{M}_h u_h \in V_h$ is to impose some intrinsic
continuity of the normal components of velocity multiscale basis
functions across the interfaces $\mathcal{F}_h$.  However,
$X_{2,h}^{os}\not\subset H(div, \Omega)$ and so the oversampling
multiscale method is nonconforming.  The expanded oversampling mixed
MsFEM formulation of the global problem (\ref{exp-fem-cont}) reads: find
$(\theta_h^{os}, u_h^{os}, p_h^{os})\in X_{1,h}^{os}\times X_{2,h}^{os} \times Q_h$
such that
\begin{eqnarray}
\label{exp-fem-num-os}
\begin{cases}
\begin{split}
(k\theta_h^{os}, \xi_h)+(u_h^{os}, \xi_h)&=0   \quad \forall \xi_h\in X_{1,h}^{os}\\
(\theta_h^{os}, \tau_h)+\sum_K (p_h^{os}, div\tau_h)_K &=0  \quad  \forall \tau_h\in X_{2,h}^{os}\\
\sum_K (div u_h^{os}, q_h)_K &=(f, q_h)  \quad q_h\in Q_h
\end{split}
\end{cases}
\end{eqnarray}
subject to the global boundary conditions.

We define operator
$C_h^{os}: \tilde{X}_{2,h}^{os}\longrightarrow (\Pi_h^l)'$ by
\[
[C_h^{os}(\tau_h), \pi_h]:=\sum_K (\mathcal{M}_h(\tau_h)\cdot n, \pi_h)_{\pK},  \quad \forall \tau_h \in \tilde{X}_{2,h}^{os}, \pi_h\in \Pi_h^l.
\]
Then the hybridization of (\ref{exp-fem-num-os}) is to find
$(\theta_h^{os}, u_h^{os}, p_h^{os},\lambda_h^{os}) \in
X_{1,h}^{os}\times \tilde{X}_{2,h}^{os} \times Q_h \times \Pi_h^l$
such that
\begin{eqnarray}
\label{exp-fem-hybrid-os}
\begin{cases}
\begin{split}
(k \theta_h^{os}, \xi_h)+(u_h^{os}, \xi_h)&=0   \quad \forall \xi_h\in X_{1,h}^{os}\\
(\theta_h^{os}, \tau_h)+\sum_K (p_h^{os}, div\tau_h)_K &=[C_h^{os}(\tau_h), \lambda_h^{os}] \quad  \forall \tau_h\in \tilde{X}_{2,h}^{os}\\
\sum_K (div u_h^{os}, q_h)_K &=(f, q_h)  \quad q_h\in Q_h\\
[C_h^{os}(u_h), \pi_h] &=0  \quad \pi_h \in \Pi_h^l.
\end{split}
\end{cases}
\end{eqnarray}

%%%%%%%%%%%%%%%%%%%%%%%%%

\subsection{Global expanded mixed MsFEM}
\label{global-ExMMsFEM}

To remove the resonance error and substantially improve the accuracy, we can
use global information to construct the multiscale basis functions.
Suppose that there exist global fields $u_1, \cdots, u_N$ that
capture the non-local features of the solution of equation
(\ref{target-elliptic}).  Following the global mixed MsFEM framework
proposed in \cite{aej08}, we define the multiscale basis equations for
the global expanded mixed MsFEM by
\begin{eqnarray}
\label{global-basis}
\begin{cases}
\begin{split}
\psi_{i, \chi}^K+k \eta_{i, \chi}^K &=0  \quad \text{in} \quad K\\
\eta_{i, \chi}^K -\nabla \phi_{i, \chi}^K &=0 \quad \text{in} \quad K\\
div(\psi_{i, \chi}^K) &= div \chi^K  \quad \text{in} \quad K\\
\psi_{i, \chi}^K\cdot n_{\pK} &=b_i^{\pK}   \quad \text{on} \quad \pK,
\end{split}
 \end{cases}
 \end{eqnarray}
where
$
  b_i^{\pK}:=\displaystyle{
  \frac{\int_{\pK} \chi^K\cdot  n_{\pK}ds}{\int_{\pK}u_i\cdot n_{\pK}ds}
  u_i\cdot n_{\pK}}
$
and $i=1,..., N$ are indices of the global information. References
\cite{aej08, bo09,oz07} provide some options for the global fields. To
reduce the computational cost of using global fields, we can pre-compute
these fields $u_1,\cdots, u_N$ at some intermediate scale \cite{jem10}.

The finite element spaces for the global expanded mixed MsFEM are
defined by
\begin{eqnarray*}
\begin{split}
X_{1,h}^g&:=\{\theta \in [L^2(\Omega)]^d:  \theta|_K\in span\{\eta_{i,\chi}^K\} \quad \text{for each $K\in \mathfrak{T}_h$} \}\\
X_{2,h}^g&:=\{u \in H(div, \Omega):  u|_K\in span\{\psi_{i,\chi}^K\} \quad \text{for each $K\in \mathfrak{T}_h$} \}\\
X_h^g&:=X_{1,h}^g\times X_{2,h}^g.
\end{split}
\end{eqnarray*}
Consequently the global expanded mixed MsFEM formulation of
the global problem (\ref{exp-fem-cont}) reads: find $\big((\theta_h^g, u_h^g),
p_h^g\big)\in X_{h}^g \times Q_h$ such that
\begin{eqnarray}
\label{exp-fem-num-global}
\begin{cases}
\begin{split}
(k \theta_h^g, \xi_h)+(u_h^g, \xi_h)&=0   \quad \forall \xi_h\in X_{1,h}^g\\
(\theta_h^g, \tau_h)+(p_h^g, div\tau_h)&=0  \quad  \forall \tau_h\in X_{2,h}^g\\
(div u_h^g, q_h)&=(f, q_h)  \quad q\in Q_h
\end{split}
\end{cases}
\end{eqnarray}
subject to the global boundary conditions.

We define the MsFEM velocity space $\tilde{X}_{2,h}^g$ and the finite
element space $\Pi_h^g$ of the Lagrange multipliers for the hybrid
formulation of (\ref{exp-fem-num-global}) by
\begin{eqnarray*}
\begin{split}
\tilde{X}_{2,h}^g&:=\{u \in [L^2(\Omega)]^d:  u|_K\in span\{\psi_{i,\chi}^K\} \quad \text{for each $K\in \mathfrak{T}_h$} \}\\
\Pi_h^g&:=\big\{\pi\in L^2(\mathcal{F}_h): \pi|_{e}\in X_{2,h}^g\cdot n_{e}  \quad \text{for each  $e \in \mathcal{F}_h$},  \quad \pi|_e=0 \quad \text{for $e\subset \partial \Omega$}  \big\}.
\end{split}
\end{eqnarray*}
The functions in $\tilde{X}_{2,h}^g$ may not have normal
continuity. The relation between $X_{2,h}^g$ and $\tilde{X}_{2,h}^g$
is expressed by the identity $X_{2,h}^g=\tilde{X}_{2,h}^g\cap H(div,
\Omega)$.

Let the operator $C_h^g: \tilde{X}_{2,h}^g\longrightarrow (\Pi_h^g)'$
be defined in a similar way to (\ref{def-C}).  Then the hybridization
of the global expanded mixed MsFEM formulation of
(\ref{exp-fem-num-global}) reads: find
$(\bar{\theta}_h^g,\bar{u}_h^g, \bar{p}_h^g, \lambda_h^g )
\in X_{h}^g \times \tilde{X}_{2,h}^g \times Q_h\times \Pi_h^g$
such that
\begin{eqnarray}
\label{exp-fem-hybrid-global}
\begin{cases}
\begin{split}
(k \bar{\theta}_h^g, \xi_h)+(\bar{u}_h^g, \xi_h)&=0   \quad \forall \xi_h\in X_{1,h}^g\\
(\bar{\theta}_h^g, \tau_h)+\sum_K (\bar{p}_h^g, div\tau_h)&=[C_h^g(\tau_h), \lambda_h^g]  \quad  \forall \tau_h\in \tilde{X}_{2,h}^g\\
\sum_K (div \bar{u}_h^g, q_h)&=(f, q_h)  \quad q\in Q_h\\
[C_h^l(\bar{u}_h^g), \pi_h]&=0   \quad \pi_h\in \Pi_h^g.
\end{split}
\end{cases}
\end{eqnarray}
We note that (\ref{exp-fem-hybrid-local}), (\ref{exp-fem-hybrid-os})
and (\ref{exp-fem-hybrid-global}) can be rewritten as the operator
formulation
\begin{equation}
\label{operator-hybrid}
\begin{bmatrix}
 A_{1,h} & B_{1,h}^* & O & O \\
B_{1,h} & O &  B_h^* & -C_h^* \\
O & B_h & O & O\\
O & -C_h&  O & O &
\end{bmatrix}
 \left[ \begin{array}{c} \bar{\theta}_h \\ \bar{u}_h\\ \bar{p}_h \\ \lambda_h \end{array} \right]
=\left[\begin{array}{c} O\\ O\\ F_h \\ O \end{array}\right].
\end{equation}
This is actually the hybrid approximation of the operator equation
(\ref{operator-num-eq}).

%%%%%%%%%%%%%%%%%%%%%%%%%%%
%%%%%%%%%%%%%%%%%%%%%%%%%%%

\section{Convergence analysis}

We will focus on the analysis of the local expanded mixed MsFEM.  The
analysis of the oversampling expanded mixed MsFEM and the global
expanded mixed MsFEM is very similar to the local expanded mixed
MsFEM but requires additional notation and definitions.  We sketch the
analysis of the oversampling expanded mixed MsFEM and global expanded
mixed MsFEM and present their convergence results.  For the local
expanded mixed MsFEM, we consider micro-scales that can be treated
in terms of periodic homogenization and $G-$convergent homogenization.

For the analysis, we will focus on the case when the velocity
space on $K$, $V_h(K)$, is the lowest-order Raviart-Thomas space ($RT_0$)
and $\chi^K\in V_h(K)$.
%$\chi^K\in V_h(K)$ are $RT_0$ (lowest order Raviart-Thomas space).
We use $\chi^K$ to build the source term and boundary conditions for the MsFEM
basis defined in equation~(\ref{local-basis}).

\subsection{Inf-sup condition for expanded mixed MsFEMs}

In this subsection, we discuss the inf-sup condition associated with
problem (\ref{exp-fem-num-local}).  To simplify the presentation,
we define three discrete operators associated with
(\ref{exp-fem-num-local}). Specifically, let $A_{1,h}:X_{1,h}^l\longrightarrow
X_{1,h}'$, $B_{1,h}: X_{1,h}^l\longrightarrow X'_{2,h}$ and $B_h:
X_{2,h}^l\longrightarrow Q'_h$, then the operators are defined as
\begin{eqnarray*}
\begin{split}
[A_{1,h} (\theta), \xi] :&=(k\theta, \xi),  \quad \forall \xi \in  X_{1,h}^l\\
[B_{1,h}(\theta), v]&:=(\theta, v), \quad \forall v\in X_{2,h}^l\\
[B_h(u), q]&:=(div (u), q),  \quad \forall q\in Q_h.
\end{split}
\end{eqnarray*}
By Lemma \ref{inf-sup-abstract-lem}, we require that the operator
$A_{1,h}$ is bounded and positive for well-posedness of problem
(\ref{exp-fem-num-local}).  If $k$ satisfies the assumption
\begin{equation}
\label{k-assum}
k_{\min} |\zeta|^2\leq \zeta^T k (x)\zeta \leq k_{\max}|\zeta|^2 \quad \text {for $\forall \zeta\in \mathbb{R}^d$ and $\forall x\in \Omega $},
\end{equation}
then the operator $A_{1,h}$ is bounded and positive. However, we can
weaken the assumption (\ref{k-assum}); for example, $k(x)$ may be
partially vanishing inside fine cells and positive and bounded
everywhere else. Then the operator $A_{1,h}$ is still positive and
bounded.  For simplicity of presentation, we use assumption
(\ref{k-assum}) for the analysis.

We have the following lemma for the discrete inf-sup condition of
problem (\ref{exp-fem-num-local}).
\begin{lemma}
\label{inf-sup}
For any $q_h\in Q_h$, there exists a positive constant $C_1$
independent of $h$ such that
\begin{equation}
\label{sup-B}
\sup_{v_h \in X_{2,h}^l\setminus \{0\}} \frac{[B_h(v_h), q_h]}{\|v_h\|_{X_2}}\geq C_1 \|q_h\|_{Q}.
\end{equation}
Let $\ker(B_h)$ be the kernel of the operator $B_h$. Then for any $v_h
\in \ker(B_h)$, there exists a positive constant $C_2$ independent of
$h$ such that
\begin{equation}
\label{sup-B1}
\sup_{\xi_h \in X_{1,h}^l\setminus \{0\}} \frac{[B_{1,h}(\xi_h), v_h]}{\|\xi_h\|_{X_1}}\geq C_2 \|v_h\|_{X_2}.
\end{equation}
\end{lemma}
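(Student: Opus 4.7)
The plan is to establish both inf-sup conditions by transferring them from the classical lowest-order Raviart--Thomas / piecewise-constant pair $(V_h,Q_h)$ to the multiscale spaces via the natural linear map
\[
\Phi : V_h \longrightarrow X_{2,h}^l, \qquad \Phi\big(\chi^K\big) := \psi_{\chi}^K,
\]
defined cell-by-cell on the $RT_0$ basis and extended by linearity. Two structural features of the local basis equation~(\ref{local-basis}) drive everything: (i) $\operatorname{div}(\psi_\chi^K) = \operatorname{div}(\chi^K)$, so $\Phi$ preserves the divergence exactly, and (ii) by the first basis equation, $\psi_\chi^K = -k\,\eta_\chi^K$ pointwise, so a member of $X_{2,h}^l$ is obtained from the corresponding member of $X_{1,h}^l$ by multiplication with $-k$.

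First I would record the energy estimate
\[
\|\psi_\chi^K\|_{0,K} \;\le\; (k_{\max}/k_{\min})\,\|\chi^K\|_{0,K},
\]
obtained by combining the weak identities in~(\ref{weak-basis-eq}): take $\xi=\eta_\chi^K$ in the first equation and $\tau = \psi_\chi^K - \chi^K$ (which lies in $H^0(\operatorname{div},K)$ and has zero divergence thanks to (i)) in the second equation. This yields $(k\eta_\chi^K,\eta_\chi^K) = -(\eta_\chi^K,\chi^K)$, whence $\|\eta_\chi^K\|_{0,K}\le k_{\min}^{-1}\|\chi^K\|_{0,K}$ and then the bound on $\psi_\chi^K$ via (ii). By linearity the same constant bounds $\|\Phi w\|_{0,K}$ by $\|w\|_{0,K}$ for every $w\in V_h(K)$, and together with (i) this gives $\|\Phi w\|_{\operatorname{div},\Omega}\le C\,\|w\|_{\operatorname{div},\Omega}$. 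The proof of~(\ref{sup-B}) is then immediate: given $q_h\in Q_h$, use the classical $RT_0$--$P_0$ inf-sup to pick $w_h\in V_h$ with $(\operatorname{div} w_h,q_h) \ge C\|q_h\|_Q\|w_h\|_{\operatorname{div}}$, and plug in $v_h := \Phi w_h \in X_{2,h}^l$.

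For~(\ref{sup-B1}) I would use that $\operatorname{div}(X_{2,h}^l)\subset Q_h$ (since $\operatorname{div}\chi^K$ is constant on $K$), so $v_h\in\ker(B_h)$ actually means $\operatorname{div} v_h=0$ on every $K$ and hence $\|v_h\|_{X_2}=\|v_h\|_{0,\Omega}$. Writing $v_h|_K=\sum_j c_j^K\psi_{\chi_j}^K$, property (ii) gives $v_h|_K = -k\,\xi_h^\star|_K$ with $\xi_h^\star := \sum_K\sum_j c_j^K\eta_{\chi_j}^K \in X_{1,h}^l$. Taking the test function $\xi_h := -\xi_h^\star$, one computes cellwise via the weak form
\[
(\xi_h,v_h)_K = -\big(k\,\xi_h^\star,\xi_h^\star\big)_K\cdot(-1) = (k\,\xi_h^\star,\xi_h^\star)_K \ge k_{\min}\|\xi_h^\star\|_{0,K}^2,
\]
while the pointwise relation $v_h = -k\,\xi_h^\star$ yields $\|v_h\|_{0,K}\le k_{\max}\|\xi_h^\star\|_{0,K}$ and $\|\xi_h\|_{0,K}=\|\xi_h^\star\|_{0,K}$. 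Combining,
\[
\frac{[B_{1,h}(\xi_h),v_h]}{\|\xi_h\|_{X_1}} \;\ge\; \frac{k_{\min}\|\xi_h^\star\|_{0,\Omega}^2}{\|\xi_h^\star\|_{0,\Omega}} \;\ge\; \frac{k_{\min}}{k_{\max}}\,\|v_h\|_{X_2},
\]
with $C_2 = k_{\min}/k_{\max}$.

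The step I expect to be the principal obstacle is verifying the continuity estimate for $\Phi$ in the $L^2$ norm, i.e., making sure that the duality argument using the weak formulation~(\ref{weak-basis-eq}) extends cleanly from a single basis function $\chi^K$ to arbitrary linear combinations (so that the bound $\|\Phi w\|_{\operatorname{div},K}\le C\|w\|_{\operatorname{div},K}$ holds with a constant depending only on $k_{\min},k_{\max}$, independent of $h$ and of the fine oscillations of $k$). Everything else is then a short calculation leveraging the pointwise identity $\psi_\chi^K=-k\,\eta_\chi^K$ and the preservation of divergence.
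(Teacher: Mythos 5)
Your proposal is correct. For (\ref{sup-B}) it is essentially the paper's own argument: your map $\Phi$ is the paper's map $M$, you use the same two structural facts ($div\,\psi_{\chi}^K=div\,\chi^K$ and $\psi_{\chi}^K=-k\eta_{\chi}^K$), and you transfer the classical $(V_h,Q_h)$ inf-sup through the $L^2$-continuity bound $\|\Phi w\|_{0,K}\leq C\|w\|_{0,K}$. The only cosmetic difference is how that bound is obtained: you test the weak system (\ref{weak-basis-eq}) with $\xi=\eta_{\chi}^K$ and $\tau=\psi_{\chi}^K-\chi^K$, whereas the paper integrates by parts against $z^K=\sum_j a_j^K\phi_{\chi_j}^K$; both give a constant depending only on $k_{\min},k_{\max}$. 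The step you flag as the principal obstacle is in fact a non-issue: the basis system is linear in the datum $\chi^K$, so the solution for $w=\sum_j a_j\chi_j^K$ is the corresponding linear combination of basis solutions and your energy identity holds verbatim with $\chi^K$ replaced by $w$ (the paper runs the argument directly on the general combination for exactly this reason). Where you genuinely diverge is (\ref{sup-B1}): the paper asserts $\ker(B_h)\subset X_{2,h}^l\subset X_{1,h}^l$ and tests with $\xi_h=v_h$ itself, obtaining $C_2=1$; that inclusion is delicate for variable $k$, since an element of $span\{\psi_{\chi_j}^K\}$ equals $-k$ times an element of $span\{\eta_{\chi_j}^K\}$ and need not itself lie in the latter span. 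Your choice $\xi_h=-\xi_h^{\star}$ with $v_h=-k\,\xi_h^{\star}$ sidesteps this entirely, uses only the pointwise relation between the two families of basis functions, and yields the slightly weaker but fully justified constant $C_2=k_{\min}/k_{\max}$; this is a more robust route to the second inf-sup condition.
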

\begin{proof}
We define $v_h^*\in V_h$, whose localization on $K$ is
$v_h^*|_K =\sum_j a_j^K \chi^K_j\in V_h(K)$, where $j$ is the
index of a face of $K$.  Consider a map $M$ whose local form
is defined by
  $M_K v_h^*=\sum_j a_j^K \psi_{\chi_j}^K \in X_{2,h}^l(K)$.
Then $M: V_h\longrightarrow X_{2,h}^l$ is a one-to-one map.
It is easy to check for any $v_h^*\in V_h$,
\begin{equation}
\label{inf-sup-eq1}
div(Mv_h^*)=div(v_h^*) \ \ \text{in}  \ \ K.
\end{equation}
Let $z^K=\sum_j a_j^K \phi_{\chi_j}^K$. Then $M_K v_h^*=-k \nabla z^K$
and we have
\begin{eqnarray}
\begin{split}
&\|M_K v_h^*\|_{0,K}^2 = \int_K k \nabla z^K \cdot k \nabla z^Kdx \leq C \int_K k \nabla z^K \cdot  \nabla z^Kdx\\
&= -C\int_K M_K v_h^*\cdot \nabla z^Kdx=C(\int_K div(M_K v_h^*)z^Kdx-\int_{\pK} (M_K v_h^*)\cdot n z^Kds)\\
&=C(\int_K div(v_h^*)z^Kdx-\int_{\partial K}  v_h^*\cdot n z^Kds)=-C\int_K v_h^*\cdot \nabla z^Kdx\\
&=C\int_K  v_h^* \cdot k^{-1}M_K v_h^*dx \leq C \|v_h^*\|_{0,K}\|M_K v_h^*\|_{0,K}.
\end{split}
\end{eqnarray}
This gives $\|M_K v_h^*\|_{0,K}\leq C \|v_h^*\|_{0,K}$. Consequently,
combining with (\ref{inf-sup-eq1}) implies that for any
$v_h^*\in V_h^*$,
\begin{equation}
\label{inf-sup-eq2}
\|M v_h^*\|_{X_2}\leq C \|v_h^*\|_{X_2}.
\end{equation}
Consequently, since the inf-sup condition holds for the classic mixed
FE pair $(V_h, Q_h)$, it follows that for any $q_h\in Q_h$
\begin{eqnarray}
\begin{split}
\displaystyle{\sup_{v_h\in X_{2,h}^l\setminus \{0\}}\frac{ (div v_h,  q_h) }{\|v_h\|_{X_2}}}
&\geq \displaystyle{\sup_{v_h^*\in V_h\setminus \{0\}}\frac{ (div (M v_h^*), q_h) }{\|M v_h^*\|_{X_2}}}\\
&\geq {1\over C}  \displaystyle{\sup_{v_h^*\in V_h\setminus \{0\}}\frac{( div v_h^*, q_h) }{\| v_h^*\|_{X_2}}} \geq {C^*\over C}\|q_h\|_{Q},
\end{split}
\end{eqnarray}
where we have used (\ref{inf-sup-eq2}) in the second step.  The proof
of (\ref{sup-B}) is complete.

It is obvious that by definition
 \[
 \ker(B_h)=\{v_h\in X_{2,h}^l:  div (v_h)=0\},
 \]
and hence $\ker(B_h)\subset X_{2,h}^l \subset X_{1,h}^l$.  Then for any $v_h\in \ker(B_h)$,
 \[
 \sup_{\xi_h \in X_{1,h}^l\setminus \{0\}} \frac{[B_{1,h}(\xi_h), v_h]}{\|\xi_h\|_{X_1}}= \sup_{\xi_h \in X_{1,h}^l\setminus \{0\}} \frac{(\xi_h, v_h)}{\|\xi_h\|_{X_1}}=\|v_h\|_{0, \Omega}
 =\|v_h\|_{X_2},
 \]
which completes the proof of (\ref{sup-B1}).
\end{proof}

Using Lemma \ref{inf-sup-abstract-lem}, Lemma \ref{inf-sup} and Lemma
\ref{strang-lem} gives the following theorem.
\begin{theorem}
\label{inf-sup-thm}
The problem (\ref{exp-fem-num-local}) has a unique solution $\big((\theta_h^l, u_h^l), p_h^l\big)\in X_{h}^l\times Q_h$.
Let $\big((\theta, u), p\big)$ be the solution of (\ref{exp-fem-cont}). Then there exists a positive constant $C$
independent of $h$  such that
\begin{equation}
\label{cea-est1}
\|\big((\theta, u), p\big)-\big((\theta_h^l, u_h^l), p_h^l\big)\|_{X\times Q}\leq C\inf_{ \big( (\xi_h, v_h), q_h)\big)\in X_{h}^l\times Q_h} \|\big((\theta, u), p\big)-\big( (\xi_h, v_h), q_h)\big)\|_{X\times Q}.
\end{equation}
\end{theorem}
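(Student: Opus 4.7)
The plan is to verify that Theorem \ref{inf-sup-thm} follows by checking the three hypotheses of the abstract well-posedness result in Lemma \ref{inf-sup-abstract-lem} and then applying the Strang-type estimate in Lemma \ref{strang-lem}, with the observation that the local method is conforming so the consistency term drops out. The inputs are already assembled: Lemma \ref{inf-sup} supplies the two inf-sup conditions, the assumption (\ref{k-assum}) supplies the ellipticity bounds on $A_{1,h}$, and the construction of $X_{2,h}^l$ in Section \ref{local-ExMMsFEM} (which enforces normal continuity across $\mathcal{F}_h$) supplies the conformity $X_{2,h}^l \subset H(div,\Omega)$.

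First I would assemble the hypotheses of Lemma \ref{inf-sup-abstract-lem}. Hypothesis (1) is precisely inequality (\ref{sup-B}) of Lemma \ref{inf-sup}; hypothesis (2) is inequality (\ref{sup-B1}) of the same lemma. For hypothesis (3), I would observe that, by the definition of $A_{1,h}$ and the bounds (\ref{k-assum}),
\begin{equation*}
   k_{\min}\|\theta_h\|_{0,\Omega}^2 \leq [A_{1,h}(\theta_h),\theta_h] = (k\theta_h,\theta_h) \leq k_{\max}\|\theta_h\|_{0,\Omega}^2
\end{equation*}
for every $\theta_h \in X_{1,h}^l$, so one may take $C_3 = k_{\min}$ and $C_4 = k_{\max}$. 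Lemma \ref{inf-sup-abstract-lem} then yields the existence and uniqueness of $\big((\theta_h^l,u_h^l),p_h^l\big) \in X_h^l \times Q_h$ solving (\ref{exp-fem-num-local}).

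Next I would invoke the Strang-type estimate (\ref{strang-ineq}) from Lemma \ref{strang-lem}, with the constants $\|B_1\|$, $\|B\|$, $C_1,\ldots,C_4$ all independent of $h$. The crucial step is to argue that the consistency error vanishes. By the definition of $X_{2,h}^l$ in Section \ref{local-ExMMsFEM}, basis functions of $X_{2,h}^l$ are assembled from the local solutions $\psi_\chi^K$ in such a way that their normal components are continuous across $\mathcal{F}_h$, i.e.\ $X_{2,h}^l \subset H(div,\Omega) = X_2$. Hence any element of $\ker(B_h)$ lies in $X_2$ with vanishing divergence and therefore belongs to $\ker(B)$, so $\ker(B_h) \subset \ker(B)$. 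By the identity displayed just after Lemma \ref{strang-lem}, the consistency supremum in (\ref{strang-ineq}) then equals zero. The Strang inequality collapses to the Céa-type estimate (\ref{cea-est1}).

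I do not anticipate a real obstacle: every ingredient has been set up in the preceding material, and the argument is essentially a bookkeeping exercise. The one point that deserves care is the conformity claim $X_{2,h}^l \subset H(div,\Omega)$, since the corresponding statement fails for the oversampling space $X_{2,h}^{os}$ and will generate a nontrivial consistency error in that setting. Here, however, the basis functions $\psi_\chi^K$ are glued together through the boundary condition $\psi_\chi^K\cdot n_{\partial K} = \chi^K\cdot n_{\partial K}$ in (\ref{local-basis}), which inherits normal continuity directly from $V_h\subset H(div,\Omega)$, so the conforming reduction is legitimate and the proof is complete.
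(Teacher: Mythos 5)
Your proposal is correct and follows exactly the route the paper intends: the paper's own justification is the one-line remark that Theorem \ref{inf-sup-thm} follows from combining Lemma \ref{inf-sup-abstract-lem}, Lemma \ref{inf-sup}, and Lemma \ref{strang-lem}, with the consistency term vanishing because the local method is conforming ($\ker(B_h)\subset\ker(B)$), as noted at the end of Section \ref{sect-discrete}. You have simply filled in the bookkeeping (including the verification of hypothesis (3) via (\ref{k-assum})) that the paper leaves implicit.
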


Following the proof of Lemma \ref{inf-sup} and using the argument of
Lemma 4.4 in \cite{ch03}, we can obtain the inf-sup condition for the
oversampling expanded mixed MsFEM problem (\ref{exp-fem-num-os}).
Then applying Lemma \ref{strang-lem} to (\ref{exp-fem-num-os}), we
have the estimate
\begin{eqnarray}
\label{os-ineq0}
\begin{split}
&\sum_K \|\big((\theta, u), p \big)-\big((\theta_h^{os}, u_h^{os}), p_h^{os}\big)\|_{X(K)\times Q(K)}\\
&\leq C\big\{ \inf_{(\xi_h, v_h, q_h)\in X_{1,h}^{os}\times X_{2,h}^{os} \times Q_h} \sum_{K} \|((\theta, u), p)-((\xi_h, v_h), q_h)\|_{X(K)\times Q(K)}  \big\}\\
& +\sum_{v_h\in X_{2,h}^{os}\setminus \{0\}} \frac{-(\theta, v_h)-\sum_{K} (div(v_h), p)}{\sum_K \|v_h\|_{div, K}}.
\end{split}
\end{eqnarray}
The last term of (\ref{os-ineq0}) is the consistency error caused by
the oversampling velocity space.

Under suitable conditions described in \cite{aej08}, we can similarly
obtain the inf-sup conditions for the global expanded mixed multiscale finite element
problem (\ref{exp-fem-num-global}) and the C\'{e}a-type estimate,
\begin{equation}
\|\big((\theta, u), p\big)-\big((\theta_h^g, u_h^g), p_h^g\big)\|_{X\times Q}\leq C\inf_{ \big( (\xi_h, v_h), q_h)\big)\in X_{h}^g\times Q_h} \|\big((\theta, u), p\big)-\big( (\xi_h, v_h), q_h)\big)\|_{X\times Q}.
\end{equation}

%%%%%%%%%%%%%%%%%%%%%

\subsection{Equivalence between expanded mixed MsFEM and its hybridization}

In this subsection, we show that expanded mixed MsFEM formulation and
its hybridization produce the same vector functions (i.e., the gradient
of pressure and velocity).  The result will be helpful to
proceed with the convergence analysis for expanded mixed MsFEM and its
hybrid formulation.  For simplicity, we focus on the case of local
mixed MsFEM.

We first have the following lemma for operator $C_h^l$.
\begin{lemma}
\label{kerC-lem}
Let $C_h^l$ be defined in (\ref{def-C}).  Then
\[
\ker C_h^l=X_{2,h}^l,  \quad  \ker (C_h^l)^*=\{0\}.
\]
\end{lemma}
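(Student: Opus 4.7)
The plan is to prove the two identities separately by unpacking the definition of $C_h^l$ as a pairing of normal traces on interior faces against piecewise functions from $\Pi_h^l$, and then choosing well-tailored test functions on either side.

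First I would tackle $\ker C_h^l = X_{2,h}^l$. The inclusion $X_{2,h}^l \subset \ker C_h^l$ is the easy direction: for any $\tau_h \in X_{2,h}^l \subset H(\mathrm{div},\Omega)$ the normal trace $\tau_h \cdot n$ is single-valued across each interior face $e \in \mathcal{F}_h$, and $\pi_h$ vanishes on $\partial \Omega$, so regrouping
\[
\sum_K (\tau_h \cdot n_{\partial K}, \pi_h)_{\partial K} = \sum_{e \in \mathcal{F}_h} \int_e (\tau_h|_{K^+}\cdot n^+ + \tau_h|_{K^-}\cdot n^-) \pi_h\, ds + \sum_{e\subset\partial\Omega}\int_e \tau_h\cdot n\, \pi_h\, ds
\]
yields zero for all $\pi_h \in \Pi_h^l$. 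For the reverse inclusion, take $\tau_h \in \tilde{X}_{2,h}^l$ with $C_h^l(\tau_h)=0$. Using the same regrouping, on each interior face $e$ shared by $K^+,K^-$ the jump $[\tau_h \cdot n]_e := \tau_h|_{K^+}\cdot n^+ + \tau_h|_{K^-}\cdot n^-$ is a candidate test function; since $\tau_h|_{K^\pm} \in \mathrm{span}\{\psi_\chi^{K^\pm}\}$ and by construction $\psi_\chi^K \cdot n_{\partial K} = \chi^K \cdot n_{\partial K}$ with $\chi^K \in RT_0$, this jump lies in $X_{2,h}^l \cdot n_e$, hence in $\Pi_h^l|_e$. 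Choosing $\pi_h = [\tau_h \cdot n]_e$ on $e$ (and zero elsewhere) forces $[\tau_h\cdot n]_e = 0$ on every interior face, and thus $\tau_h \in H(\mathrm{div},\Omega)\cap \tilde{X}_{2,h}^l = X_{2,h}^l$.

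Next I would prove $\ker (C_h^l)^* = \{0\}$. Let $\pi_h \in \Pi_h^l$ satisfy $[C_h^l(\tau_h), \pi_h]=0$ for every $\tau_h \in \tilde{X}_{2,h}^l$. Fix an interior face $e_0$ belonging to some $K_0$, and let $j_0$ index it among the faces of $K_0$. Define $\tau_h$ to equal $\psi_{\chi_{j_0}}^{K_0}$ on $K_0$ and zero on every other element; this is allowed because $\tilde{X}_{2,h}^l$ imposes no normal continuity. Then $\tau_h \cdot n_{\partial K_0}|_{e_j} = \chi_{j_0}^{K_0}\cdot n_{\partial K_0}|_{e_j} = \delta_{j j_0}$ (up to the $RT_0$ normalization), so the condition collapses to a single-face integral $\int_{e_0}\pi_h\, ds = 0$. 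Since $\pi_h|_{e_0}$ is a constant (because $\Pi_h^l|_{e_0} = X_{2,h}^l \cdot n_{e_0}$ consists of functions constant on each face in the $RT_0$ setting), this yields $\pi_h|_{e_0}=0$. Iterating over all interior faces gives $\pi_h \equiv 0$.

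The main obstacle, as usual for hybridization arguments, is verifying that the jump $[\tau_h\cdot n]_e$ of an element in $\tilde{X}_{2,h}^l$ is itself an admissible multiplier, i.e., lies in $\Pi_h^l|_e$; this is what makes the ``test against the jump'' trick work and closes the reverse inclusion. This is where we exploit the defining boundary condition $\psi_\chi^K \cdot n_{\partial K} = \chi^K \cdot n_{\partial K}$ of the multiscale basis and the fact that $\Pi_h^l|_e$ is built precisely from $X_{2,h}^l \cdot n_e$, so the match between the trace space and the multiplier space is exact by construction.
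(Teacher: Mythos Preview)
Your proof is correct and follows essentially the same approach as the paper. For $\ker(C_h^l)^*=\{0\}$ your argument is virtually identical to the paper's: pick a single element and a single face, use the multiscale basis function supported there (whose normal trace coincides with that of the associated $RT_0$ basis), and conclude the multiplier vanishes facewise. For $\ker C_h^l=X_{2,h}^l$ the paper simply declares it ``immediate from the definition'' (relying on the stated identity $X_{2,h}^l=\tilde{X}_{2,h}^l\cap H(\mathrm{div},\Omega)$), whereas you spell out both inclusions via the standard jump-testing argument; this is not a different route, just the details the paper elides.
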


\begin{proof}
By definition of $X_{2,h}^l$ and (\ref{def-C}), it immediately follows that
\[
\ker C_h^l=X_{2,h}^l.
\]
Let $K^*$ be any cell in $\mathfrak{T}_h$ and $e^*$ be any face of $K^*$. Let
$\tau^*\in \tilde{X}_{2,h}^l$ be such that
\[
\tau_h^*|_K=0 \quad \forall K\neq K^*
\]
and defined on $K^*$ by
\begin{eqnarray*}
\begin{cases}
\tau_h^*\cdot n_e&=0  \quad \text{for}  \quad e\neq e^*\\
\tau_h^*\cdot n_{e^*} &= \chi_{e^*}^{K^*}\cdot n_{e^*},
\end{cases}
\end{eqnarray*}
where $\chi_{e^*}^{K^*}\in V_h(K^*)$ is any basis function associated
with face $e^*$.  Then $[(C_h^l)^*(\pi_h), \tau_h^*]=0$ means that
$(\chi_{e^*}^{K^*} \cdot n, \pi_h)_{e^*}=0$ for any basis function
$\chi_{e^*}^{K^*}\in V_h(K^*)$ associated with face $e^*$.  The
classic mixed FEM theory implies that $\pi_h=0$ on $e^*$.  Since $K^*$
is any cell in $\mathfrak{T}_h$ and $e^*$ is an arbitrary interface,
$\pi_h=0$ on $\mathcal{F}_h$.  This completes the proof.

\end{proof}

Next we show that problem $(\ref{exp-fem-num-local})$ and problem
(\ref{exp-fem-hybrid-local}) produce the same solution.

\begin{theorem}
\label{equi-thm-local}
Problem $(\ref{exp-fem-num-local})$ has a unique solution
$\big((\theta_h^l, u_h^l), p_h^l\big)\in X_h^l\times Q_h$.  Problem
(\ref{exp-fem-hybrid-local}) has a unique solution
$(\bar{\theta}_h^l, \bar{u}_h^l, \bar{p}_h^l, \lambda_h^l )
\in X_{1,h}^l\times \tilde{X}_{2,h}^l \times Q_h\times \Pi_h^l$.
Moreover,
$ \big((\bar{\theta}_h^l, \bar{u}_h^l),\bar{p}_h^l\big)
= \big((\theta_h^l, u_h^l), p_h^l\big) $.

\end{theorem}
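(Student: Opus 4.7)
The plan is to establish existence and uniqueness for the hybrid problem (\ref{exp-fem-hybrid-local}) and then show that the triple $(\bar\theta_h^l,\bar u_h^l,\bar p_h^l)$ extracted from a hybrid solution actually solves the non-hybrid problem (\ref{exp-fem-num-local}); the conclusion then follows from the uniqueness for (\ref{exp-fem-num-local}) already given by Theorem \ref{inf-sup-thm}. The whole argument rides on the two properties of $C_h^l$ furnished by Lemma \ref{kerC-lem}, namely $\ker C_h^l = X_{2,h}^l$ and $\ker (C_h^l)^* = \{0\}$, which respectively let one force continuity of normal traces and then solve for $\lambda_h^l$.

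First I would argue the implication "hybrid $\Rightarrow$ non-hybrid." Taking $(\bar\theta_h^l,\bar u_h^l,\bar p_h^l,\lambda_h^l)$ to satisfy (\ref{exp-fem-hybrid-local}), the fourth equation reads $[C_h^l(\bar u_h^l),\pi_h]=0$ for every $\pi_h\in\Pi_h^l$, and Lemma \ref{kerC-lem} then forces $\bar u_h^l\in\ker C_h^l=X_{2,h}^l$, i.e.\ the normal component of $\bar u_h^l$ is already continuous across every interior face. Now restrict the test space in the second equation to $\tau_h\in X_{2,h}^l\subset\tilde X_{2,h}^l$; because $X_{2,h}^l\subset\ker C_h^l$, the Lagrange multiplier term $[C_h^l(\tau_h),\lambda_h^l]$ disappears, and the broken divergence pairing $\sum_K(\bar p_h^l,\mathrm{div}\,\tau_h)_K$ collapses to the global pairing $(\bar p_h^l,\mathrm{div}\,\tau_h)$ since $\tau_h\in H(\mathrm{div},\Omega)$. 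Together with the first and third equations of (\ref{exp-fem-hybrid-local}) (which already look like those of the non-hybrid problem), this shows $(\bar\theta_h^l,\bar u_h^l,\bar p_h^l)$ solves (\ref{exp-fem-num-local}). Uniqueness for the non-hybrid system, established in Theorem \ref{inf-sup-thm}, then identifies it with $(\theta_h^l,u_h^l,p_h^l)$.

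Next I would verify that the hybrid problem actually has a solution, by starting from the known non-hybrid solution $(\theta_h^l,u_h^l,p_h^l)$ and constructing $\lambda_h^l$. The first, third and fourth equations of (\ref{exp-fem-hybrid-local}) are immediate since $u_h^l\in X_{2,h}^l=\ker C_h^l$. For the second equation I define the linear functional
\[
G(\tau_h) := (\theta_h^l,\tau_h) + \sum_K (p_h^l,\mathrm{div}\,\tau_h)_K,\qquad \tau_h\in\tilde X_{2,h}^l.
\]
Restricted to $\ker C_h^l=X_{2,h}^l$, $G$ vanishes by the non-hybrid second equation (again using that the broken div agrees with the global div on $X_{2,h}^l$). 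Hence $G$ factors through $\tilde X_{2,h}^l/\ker C_h^l$, and in finite dimensions the second clause of Lemma \ref{kerC-lem} — injectivity of $(C_h^l)^*$ — is equivalent to surjectivity of $C_h^l$ onto $(\Pi_h^l)'$, so there exists a (unique) $\lambda_h^l\in\Pi_h^l$ with $G(\tau_h)=[C_h^l(\tau_h),\lambda_h^l]$ for all $\tau_h\in\tilde X_{2,h}^l$. Uniqueness of the whole hybrid tuple then follows because $(\bar\theta_h^l,\bar u_h^l,\bar p_h^l)$ is pinned down by the previous paragraph and $\lambda_h^l$ is pinned down by $(C_h^l)^*$ being injective.

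The only genuinely delicate point is the passage between the broken divergence pairing used in (\ref{exp-fem-hybrid-local}) and the global one used in (\ref{exp-fem-num-local}); this is routine once one knows $\bar u_h^l\in X_{2,h}^l\subset H(\mathrm{div},\Omega)$, so there is no real obstacle — the proof is essentially an algebraic reorganization powered by Lemma \ref{kerC-lem}.
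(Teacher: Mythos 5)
Your proof is correct and follows essentially the same route as the paper: both arguments hinge on Lemma \ref{kerC-lem}, define the same linear functional (your $G$ is the paper's $L$) vanishing on $\ker C_h^l = X_{2,h}^l$, and recover $\lambda_h^l$ from the identity $[\ker C_h^l]^{\perp} = \Re\big((C_h^l)^*\big)$ together with injectivity of $(C_h^l)^*$ for uniqueness. The only cosmetic differences are that the paper invokes the Closed Range Theorem where you use the equivalent finite-dimensional linear algebra, and the paper re-derives uniqueness of (\ref{exp-fem-num-local}) by a duality argument where you simply cite Theorem \ref{inf-sup-thm}; your write-up of the ``hybrid $\Rightarrow$ non-hybrid'' direction is in fact somewhat more explicit than the paper's.
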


\begin{proof}

Following the proof of Theorem 3.1 in \cite{awy97} and using a
standard duality argument, we obtain that
\[
  \|\theta_h^l\|_{0, \Omega}+\|u_h^l\|_{0, \Omega}+\|p_h^l\|_{0, \Omega}
  \leq C\|f\|_{0, \Omega},
\]
which shows (\ref{exp-fem-num-local}) has a unique solution by setting
$f=0$.

Let $\big((\theta_h^l, u_h^l), p_h^l\big)$ be the unique solution of
$(\ref{exp-fem-num-local})$. We define a linear operator by
\begin{equation}
\label{def-L-1}
L(\tau_h)=(\theta_h^l, \tau_h)+\sum_K (p_h^l, div \tau_h)_K,  \quad \forall \tau_h\in \tilde{X}_{2,h}^l.
\end{equation}
It is clear that $L(\tau_h)=0$ for all $\tau_h\in X_{2,h}^l$, i.e, $X_{2,h}^l \subset \ker L$ and
\[
L\in [\ker(C_h^l)]^{\perp}:=\{ l\in (\tilde{X}_{2,h}^l)':  <l, \tau_h>=0, \quad \forall \tau_h\in \ker(C_h^l)\}.
\]
By the Closed Range Theorem, it follows that
\[
[\ker(C_h^l)]^{\perp}=\Re( (C_{h}^l)^*).
\]
This means that there exists a $\lambda_h\in \Pi_h^l$ such that
$(C_h^l)^*(\lambda_h)=L$, which gives
\begin{equation}
\label{def-L-2}
L(\tau_h)=[C_h^l(\tau_h), \lambda_h^l] \quad \forall \tau_h\in \tilde{X}_{2,h}^l.
\end{equation}
Then by Lemma \ref{kerC-lem}, $\lambda_h^l$ is unique.  Combining
(\ref{def-L-1}) and (\ref{def-L-2}) gives
\begin{equation}
\label{def-L-3}
(\theta_h^l, \tau_h)+\sum_K (p_h^l, div \tau_h)_K=[C_h^l(\tau_h), \lambda_h^l] \quad \forall \tau_h\in \tilde{X}_{2,h}^l.
\end{equation}
The equation (\ref{def-L-3}) is exactly the same as the second
equation in (\ref{exp-fem-hybrid-local}).  The fourth equation in
(\ref{exp-fem-hybrid-local}) and Lemma \ref{kerC-lem} show that
$\bar{u}_h\in X_{2,h}^l$.  By uniqueness of solutions of
(\ref{exp-fem-num-local}) and uniqueness of $\lambda_h^l$, we
immediately obtain that $\big((\bar{\theta}_h^l, \bar{u}_h^l),
\bar{p}_h^l\big)$ is unique and
\[
\big((\bar{\theta}_h^l, \bar{u}_h^l), \bar{p}_h^l\big)=\big((\theta_h^l, u_h^l), p_h^l\big).
\]

\end{proof}

By a procedure similar to Theorem \ref{equi-thm-local}, we have the
following theorems for oversampling expanded mixed MsFEM and global
expanded mixed MsFEM, respectively.
\begin{theorem}
\label{equi-thm-os}
Problem (\ref{exp-fem-hybrid-os}) has a unique solution $(\bar{\theta}_h^{os}, \bar{u}_h^{os}, \bar{p}_h^{os}, \lambda_h^{os} )\in X_{1,h}^{os} \times \tilde{X}_{2,h}^{os} \times Q_h\times \Pi_h^l$.
Moreover, $\big((\bar{\theta}_h^{os}, \bar{u}_h^{os}), \bar{p}_h^{os}\big)=\big((\theta_h^{os}, u_h^{os}), p_h^{os}\big)$, where $\big((\theta_h^{os}, u_h^{os}), p_h^{os}\big)$ is the unique solution of problem (\ref{exp-fem-num-os}).
\end{theorem}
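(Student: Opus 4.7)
The plan is to mirror the proof of Theorem \ref{equi-thm-local} and adapt each ingredient to the oversampling setting. The three ingredients needed are: (i) unique solvability of the non-hybrid oversampling system (\ref{exp-fem-num-os}); (ii) an analog of Lemma \ref{kerC-lem} that identifies $\ker C_h^{os}$ with $X_{2,h}^{os}$ and shows that $(C_h^{os})^*$ is injective; and (iii) a Closed Range Theorem argument producing the Lagrange multiplier $\lambda_h^{os}$.

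First I would prove the oversampling analog of Lemma \ref{kerC-lem}. The equality $\ker C_h^{os}=X_{2,h}^{os}$ follows directly from the definitions: $[C_h^{os}(\tau_h),\pi_h]=0$ for all $\pi_h\in\Pi_h^l$ is exactly the condition that the normal jumps of $\mathcal{M}_h\tau_h$ across the interfaces $\mathcal{F}_h$ vanish, i.e.\ $\mathcal{M}_h\tau_h\in V_h\subset H(\mathrm{div},\Omega)$, which is the defining condition of $X_{2,h}^{os}$. For injectivity of $(C_h^{os})^*$, I would imitate the test-function construction in the proof of Lemma \ref{kerC-lem}: given a face $e^*$ of a cell $K^*$, take $\tau_h^*\in \tilde{X}_{2,h}^{os}$ with $\tau_h^*|_K=0$ for $K\neq K^*$ and $\tau_h^*|_{K^*}=\bar{\psi}_{\chi_{e^*}}^{K^*}$. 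By the defining relation (\ref{def-Mh-os}) for $\mathcal{M}_h$ we then have $\mathcal{M}_h\tau_h^*|_{K^*}=\chi_{e^*}^{K^*}\in V_h(K^*)$, whose normal trace is supported on $e^*$. The condition $[(C_h^{os})^*(\pi_h),\tau_h^*]=0$ reduces to $(\chi_{e^*}^{K^*}\cdot n,\pi_h)_{e^*}=0$, and the classical $RT_0$ theory gives $\pi_h=0$ on $e^*$. Ranging over $e^*$ yields $\pi_h\equiv 0$ on $\mathcal{F}_h$.

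Next, I would establish unique solvability of (\ref{exp-fem-num-os}). Having assumed the inf-sup conditions in Lemma \ref{inf-sup} carry over to the oversampling spaces (the argument in Subsection 4.1 and Lemma 4.4 of \cite{ch03} already yield this, and is cited immediately before (\ref{os-ineq0})), Lemma \ref{inf-sup-abstract-lem} gives existence and uniqueness of $\big((\theta_h^{os},u_h^{os}),p_h^{os}\big)\in X_{1,h}^{os}\times X_{2,h}^{os}\times Q_h$. With the non-hybrid solution in hand, define the linear functional
\begin{equation*}
L(\tau_h):=(\theta_h^{os},\tau_h)+\sum_K (p_h^{os},\operatorname{div}\tau_h)_K,\qquad \tau_h\in\tilde{X}_{2,h}^{os}.
\end{equation*}
The second equation of (\ref{exp-fem-num-os}) says $L\equiv 0$ on $X_{2,h}^{os}=\ker C_h^{os}$, so in the finite-dimensional space $\tilde{X}_{2,h}^{os}$ the Closed Range Theorem yields $L\in[\ker C_h^{os}]^{\perp}=\Re((C_h^{os})^*)$. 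Hence there exists $\lambda_h^{os}\in\Pi_h^l$ with $(C_h^{os})^*(\lambda_h^{os})=L$, and $\lambda_h^{os}$ is unique by the injectivity of $(C_h^{os})^*$ established above. The quadruple $(\theta_h^{os},u_h^{os},p_h^{os},\lambda_h^{os})$ then satisfies all four equations of (\ref{exp-fem-hybrid-os}).

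For the converse direction, any solution $(\bar\theta_h^{os},\bar u_h^{os},\bar p_h^{os},\lambda_h^{os})$ of the hybrid system satisfies $\bar u_h^{os}\in\ker C_h^{os}=X_{2,h}^{os}$ by the fourth equation, so the first three hybrid equations reduce to (\ref{exp-fem-num-os}); uniqueness of the non-hybrid solution then forces $\big((\bar\theta_h^{os},\bar u_h^{os}),\bar p_h^{os}\big)=\big((\theta_h^{os},u_h^{os}),p_h^{os}\big)$, and uniqueness of $\lambda_h^{os}$ completes the proof. The main obstacle is the kernel analysis in step (ii): the oversampling basis functions $\bar\psi_{\chi_j}^K$ do not themselves have a simple single-face trace, so one must pass through $\mathcal{M}_h$ to recover the clean face-by-face structure that underlies the argument in Lemma \ref{kerC-lem}; once that reduction is made, the remainder of the argument is a routine transcription of the local proof.
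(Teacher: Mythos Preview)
Your proposal is correct and follows precisely the route the paper intends: the paper's own ``proof'' of Theorem~\ref{equi-thm-os} is the single sentence ``By a procedure similar to Theorem~\ref{equi-thm-local}\ldots'', and your write-up is a faithful transcription of that procedure to the oversampling setting. Your explicit identification of the one nontrivial adaptation---that the kernel analysis for $C_h^{os}$ must be routed through $\mathcal{M}_h$ because the traces of $\bar\psi_{\chi_j}^K$ are not face-localized---is exactly the point the paper leaves implicit, and your handling of it is correct.
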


\begin{theorem}
\label{equi-thm-global}
Problem (\ref{exp-fem-hybrid-global}) has a unique solution $(\bar{\theta}_h^g, \bar{u}_h^g, \bar{p}_h^g, \lambda_h^g )\in X_{1,h}^g \times \tilde{X}_{2,h}^g \times Q_h\times \Pi_h^g$.
Moreover, $\big((\bar{\theta}_h^g, \bar{u}_h^g), \bar{p}_h^g\big)=\big((\theta_h^g, u_h^g), p_h^g\big)$, where $\big((\theta_h^g, u_h^g), p_h^g\big)$ is the unique solution of $(\ref{exp-fem-num-global})$.
\end{theorem}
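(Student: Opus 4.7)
The plan is to mimic the proof of Theorem \ref{equi-thm-local} line by line, with the local multiscale spaces replaced by their global counterparts. First, I would establish the analog of Lemma \ref{kerC-lem} for the operator $C_h^g$, namely
\[
\ker C_h^g = X_{2,h}^g, \qquad \ker (C_h^g)^* = \{0\}.
\]
The first identity is immediate from the definitions of $X_{2,h}^g$, $\tilde{X}_{2,h}^g$ and $C_h^g$: a function in $\tilde{X}_{2,h}^g$ lies in $H(div,\Omega)$ precisely when its normal component is single-valued across every interior face, which is exactly the vanishing of the jumps tested against the Lagrange-multiplier space $\Pi_h^g$. For the second identity, I would reproduce the argument of Lemma \ref{kerC-lem}: fix a cell $K^*$ and an interior face $e^*$, and take a test function $\tau_h^*\in \tilde{X}_{2,h}^g$ supported on $K^*$ whose trace on $\partial K^*$ is a nonzero multiple of a global basis function $\psi_{i,\chi_{e^*}}^{K^*}$ associated with $e^*$. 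The pairing $[(C_h^g)^*\pi_h, \tau_h^*]$ then reduces to an integral over $e^*$ against $\pi_h$, and by varying $i$ and $\chi_{e^*}$ one concludes $\pi_h|_{e^*}=0$. The only subtle point is that, by construction of the global multiscale basis in (\ref{global-basis}), the normal traces $\psi_{i,\chi}^K\cdot n_{\pK}$ span (up to scalar multiples) the classical $V_h\cdot n$ on each face, so the classical mixed-FEM surjectivity argument invoked in Lemma \ref{kerC-lem} still applies.

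Having these two kernel statements, I would then repeat the Closed Range / duality argument. Let $\big((\theta_h^g, u_h^g), p_h^g\big)\in X_h^g \times Q_h$ be the unique solution of (\ref{exp-fem-num-global}), whose existence and uniqueness follow from the inf-sup results sketched after Theorem \ref{inf-sup-thm}. Define the linear functional
\[
L(\tau_h) := (\theta_h^g, \tau_h) + \sum_K (p_h^g, \mathrm{div}\,\tau_h)_K, \qquad \tau_h\in \tilde{X}_{2,h}^g.
\]
The second equation of (\ref{exp-fem-num-global}) says $L$ vanishes on $X_{2,h}^g = \ker C_h^g$, so $L\in [\ker C_h^g]^\perp = \Re((C_h^g)^*)$ by the Closed Range Theorem. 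Hence there exists $\lambda_h^g\in \Pi_h^g$ with $(C_h^g)^*\lambda_h^g = L$, and $\lambda_h^g$ is unique because $\ker(C_h^g)^* = \{0\}$.

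Finally I would verify that the quadruple $(\theta_h^g, u_h^g, p_h^g, \lambda_h^g)$ satisfies all four equations of (\ref{exp-fem-hybrid-global}): the first and third are literally the first and third of (\ref{exp-fem-num-global}); the second is exactly the identity $L(\tau_h)=[C_h^g(\tau_h),\lambda_h^g]$ just constructed; and the fourth follows because $u_h^g\in X_{2,h}^g=\ker C_h^g$. Conversely, any solution $(\bar\theta_h^g, \bar u_h^g, \bar p_h^g, \lambda_h^g)$ of (\ref{exp-fem-hybrid-global}) forces $\bar u_h^g\in \ker C_h^g = X_{2,h}^g$ via the fourth equation, at which point the first three equations coincide with (\ref{exp-fem-num-global}). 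Uniqueness of $\big((\theta_h^g, u_h^g), p_h^g\big)$ together with uniqueness of $\lambda_h^g$ yields the whole statement, giving $\big((\bar{\theta}_h^g, \bar{u}_h^g), \bar{p}_h^g\big)=\big((\theta_h^g, u_h^g), p_h^g\big)$.

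The only genuine obstacle, as opposed to bookkeeping, is the surjectivity needed for $\ker(C_h^g)^*=\{0\}$: one must be sure that the normal traces of the global multiscale basis functions on each face $e$ are rich enough to determine $\pi_h|_e$. This is where the assumption that $\chi^K$ ranges over the $RT_0$ basis of $V_h(K)$ (so that the boundary data $b_i^{\pK}$ in (\ref{global-basis}) are nonzero on each face) is used, exactly as in the local case.
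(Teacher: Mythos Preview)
Your proposal is correct and follows exactly the route the paper takes: the paper does not give a separate proof of Theorem~\ref{equi-thm-global} but simply states that it follows ``by a procedure similar to Theorem~\ref{equi-thm-local},'' which is precisely what you carry out. One small imprecision: the normal traces of the global basis functions $\psi_{i,\chi}^K$ are proportional to $u_i\cdot n$ on each face (cf.~(\ref{global-basis})), not to the classical $V_h\cdot n$; however this does not affect your argument, since $\Pi_h^g$ is by definition the span of exactly these traces, so testing $\pi_h\in\Pi_h^g$ against all of them still forces $\pi_h|_e=0$.
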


Theorem \ref{equi-thm-local},  Theorem \ref{equi-thm-os} and Theorem \ref{equi-thm-global} allow us to identify $\big((\bar{\theta}_h^l, \bar{u}_h^l), \bar{p}_h^l\big)$ with $\big((\theta_h^l, u_h^l), p_h^l\big)$,
 $\big((\bar{\theta}_h^{os}, \bar{u}_h^{os}), \bar{p}_h^{os}\big)$ with $\big((\theta_h^{os}, u_h^{os}), p_h^{os}\big)$, and
     $\big((\bar{\theta}_h^g, \bar{u}_h^g), \bar{p}_h^g\big)$ with $\big((\theta_h^g, u_h^g), p_h^g\big)$, respectively. So we can drop the upper bars in corresponding
hybrid systems (\ref{exp-fem-hybrid-local}),  (\ref{exp-fem-hybrid-os}), and (\ref{exp-fem-hybrid-global}), respectively. We note that
$\bar{u}^l_h=u_h^l$ is an identity  in the sense of vector-valued functions. However, the corresponding coefficient arrays on
their numerical representations are not equal to each other, because $dim(X_{2,h}^l)\neq dim(\tilde{X}_{2,h}^l)$.  The same
relationships hold for the oversampling expanded mixed MsFEM and the global expanded mixed MsFEM as well.
Thus, to simplify the notation, we will drop the upper bars from
(\ref{exp-fem-hybrid-local}), (\ref{exp-fem-hybrid-os})  and (\ref{exp-fem-hybrid-global}) in the rest of the paper.

%%%%%%%%%%%

\subsection{Error analysis for the periodic case}
\label{peroidc-case}

In this subsection, we consider the case $k(x)=k({x\over \epsilon})$,
where the parameter $\epsilon$ characterizes small scales representing
small physical lengths. To highlight the dependence on the small
parameter $\epsilon$, we rewrite the solution of (\ref{exp-fem-cont})
as $\big((\se, \ue), \pe\big)$.  We present an error analysis when the
$\epsilon$ scales are periodic.  The local expanded mixed MsFEM and
the oversampling expanded mixed MsFEM will be considered in the
following subsections.

We briefly recall the relevant homogenization results for the periodic case.
Specifically, let
$y={x\over \epsilon}$ and $k(y)$ be a periodic function in the unit
cube $Y=[0,1]^d$.  In this case, we can compute $k^*$ in the following
way.  Let $\mathcal{N}=\{\mathcal{N}_1,\cdots, \mathcal{N}_d\}$ solve
the auxiliary equations
\begin{eqnarray}
\begin{split}
\begin{cases}
-div_y (k(y)\nabla \mathcal{N}_i) &=div_y(k(y) e_i ) \ \ \text{in} \ \ Y\\
\langle  \mathcal{N}_i(y)\rangle _Y&=0.
\end{cases}
\end{split}
\end{eqnarray}
Here $e_i$ ($i=1,\cdots, d$) is the unit vector in
$\mathbb{R}^d$. Then the homogenized tensor $k^*$ is defined as
(see, e.g., \cite{jko94})
\[
k^*=\langle k(\nabla \mathcal{N} +I)\rangle_Y.
\]
Let $p^*$ solve   the homogenized equation of (\ref{target-elliptic})  by
\begin{eqnarray}
\label{homog-elliptic}
\begin{cases}
\begin{split}
 -div(  k^*\nabla p^*)&=f(x) \quad \text{in} \quad\Omega\\
 p^* &=0 \quad \text{on} \quad
\partial\Omega.
\end{split}
\end{cases}
\end{eqnarray}
We define the homogenized velocity $u^*$ by $u^*:=-k^*\nabla p^*$.

Since we consider  $\chi^K\in V_h(K):=RT_0(K)$, we have $div\chi^K={1\over |K|}$ and $\chi_e^K\cdot n_e ={1\over |e|}$ on $e$ and 0 otherwise.
For this case, $Q_h$ is a piecewise constant space.

\subsubsection{Analysis of local expanded  MsFEM}

Let $\mathcal{I}_h^*: [H^1(\Omega)]^d\longrightarrow V_h$ be the interpolation operator of $RT_0$ for velocity.
Let $\mathcal{P}_h: L^2(\Omega)\longrightarrow Q_h$ be an orthogonal $L^2$ projection.
We define the multiscale interpolation operator  $\mathcal{I}_h: [H^1(\Omega)]^d\longrightarrow \tilde{X}_{2,h}^l$ for velocity by
\[
\mathcal{I}_h(v):=\sum_{e\in \mathcal{F}_h} (\int_e v\cdot n_{e} ds) \psi_{\chi_e},
\]
where $\psi_{\chi_e}$ is a multiscale velocity basis function associated with interface $e$.
We define the $L^2$ interface projection  $\mathcal{P}_{\partial}: L^2({\mathcal{F}_h})\longrightarrow \Pi_h^l$ such
that
\[
(\mathcal{P}_{\partial} p-p, \pi_h)_e=0  \quad \forall \pi_h\in \Pi_h^l \quad \text{ and} \quad \forall e\in\mathcal{F}_h.
\]
This interface projection implies that $\mathcal{P}_{\partial} p|_e=(\int_e pds)\chi_e\cdot n_e$.
We define a discrete norm on $L^2({\mathcal{F}_h})$ by
\[
\|\pi\|_{-{1\over 2}, h}^2=\sum_{K\in \mathfrak{T}}\sum_{e\subset \pK} h_K \|\pi\|_{0,e}^2.
\]

\begin{lemma}
If $\chi_e^K$ is a lowest Raviart-Thomas element, then the
corresponding multiscale basis function $\psi_{\chi_e}^K$ satisfies
\begin{equation}
\label{basis-est}
\|\psi_{\chi_e}^K\|_{0,K} \leq C |e|^{-{1\over 2}} h_K^{1\over 2}.
\end{equation}
\end{lemma}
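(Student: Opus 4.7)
The plan is to reduce the multiscale estimate to a standard scaling estimate for the lowest-order Raviart–Thomas basis on $K$ itself. Concretely, I will first show that the multiscale lift satisfies
\[
\|\psi_{\chi_e}^K\|_{0,K} \le C \|\chi_e^K\|_{0,K},
\]
with $C$ depending only on the ellipticity constants $k_{\min},k_{\max}$ from (\ref{k-assum}), and then invoke a reference-element scaling argument to bound $\|\chi_e^K\|_{0,K}$ by $C\,|e|^{-1/2}h_K^{1/2}$.

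For the first step I would use the variational structure of (\ref{local-basis}). Since $\operatorname{div}(\psi_{\chi_e}^K-\chi_e^K)=0$ in $K$ and $(\psi_{\chi_e}^K-\chi_e^K)\cdot n_{\partial K}=0$ on $\partial K$, testing against $\phi_{\chi_e}^K$ and integrating by parts gives
\[
\int_K (\psi_{\chi_e}^K - \chi_e^K)\cdot \nabla \phi_{\chi_e}^K\,dx = 0.
\]
Substituting $\nabla\phi_{\chi_e}^K = \eta_{\chi_e}^K = -k^{-1}\psi_{\chi_e}^K$ yields
\[
\int_K \psi_{\chi_e}^K\cdot k^{-1}\psi_{\chi_e}^K\,dx = \int_K \chi_e^K\cdot k^{-1}\psi_{\chi_e}^K\,dx,
\]
and a $k^{-1}$-weighted Cauchy–Schwarz inequality followed by (\ref{k-assum}) produces $\|\psi_{\chi_e}^K\|_{0,K} \le \sqrt{k_{\max}/k_{\min}}\,\|\chi_e^K\|_{0,K}$.

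For the second step I would pass to the reference element $\hat K$ via an affine map $F:\hat K\to K$ and transport $\chi_e^K$ by the Piola transform, $\chi_e^K(x) = (\det DF)^{-1}\,DF\,\hat\chi_{\hat e}(\hat x)$, which preserves normal traces and divergences. Since $\hat\chi_{\hat e}$ is a fixed function on $\hat K$ with $\|\hat\chi_{\hat e}\|_{0,\hat K}=O(1)$, the change-of-variables formula gives
\[
\|\chi_e^K\|_{0,K}^2 \le \frac{\|DF\|^2}{\det DF}\,\|\hat\chi_{\hat e}\|_{0,\hat K}^2 \le C\,h_K^{2-d},
\]
because $\|DF\|\sim h_K$ and $\det DF\sim h_K^d$ on a shape-regular element. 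Using $|e|\sim h_K^{d-1}$, this rearranges to $\|\chi_e^K\|_{0,K}\le C\,|e|^{-1/2}h_K^{1/2}$, and combining with the first step finishes the proof.

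There is no real obstacle here; both pieces are essentially routine. The only care needed is to verify the Piola-based scaling identities precisely on shape-regular meshes so that the exponent matching $|e|^{-1/2}h_K^{1/2} = h_K^{(2-d)/2}$ comes out with constants depending only on the shape-regularity and on $k_{\min},k_{\max}$, not on $h_K$ or on $\epsilon$; this independence is what makes the estimate useful for the subsequent error analysis.
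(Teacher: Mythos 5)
Your proof is correct, but it follows a genuinely different route from the paper's. The paper works entirely with the energy of the scalar potential $\phi_{\chi_e}^K$: it multiplies the strong form by $\phi_{\chi_e}^K$, integrates by parts so that only the flux datum $\tfrac{1}{|e|}$ on the single face $e$ survives (the volume term drops by the zero-mean normalization), and then closes the estimate with a scaled trace inequality $\|\phi_{\chi_e}^K\|_{0,e}\leq C h_K^{1/2}\|\nabla\phi_{\chi_e}^K\|_{0,K}$, which is where the factor $|e|^{-1/2}h_K^{1/2}$ appears. You instead split the bound into (a) the $L^2$-stability of the multiscale lift, $\|\psi_{\chi_e}^K\|_{0,K}\leq C\|\chi_e^K\|_{0,K}$, and (b) a Piola-transform scaling of the $RT_0$ basis function itself. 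Your step (a) is essentially the same computation the paper already performs in the proof of Lemma \ref{inf-sup} to establish $\|M_K v_h^*\|_{0,K}\leq C\|v_h^*\|_{0,K}$, so you are in effect specializing that stability bound to a single basis function; this makes your argument more modular, and it extends verbatim to other choices of $\chi^K$ (e.g., higher-order Raviart--Thomas or BDM data), whereas the paper's argument is tied to the specific $RT_0$ data $div\,\chi^K=\tfrac{1}{|K|}$ and $\chi_e^K\cdot n_e=\tfrac{1}{|e|}$. What the paper's route buys is that it avoids any explicit reference-element computation for $\chi_e^K$, trading it for the trace/scaling inequality on $\phi_{\chi_e}^K$. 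Both arguments yield constants depending only on shape regularity and on $k_{\max}/k_{\min}$ from (\ref{k-assum}), and both implicitly use $\nabla\phi_{\chi_e}^K=-k^{-1}\psi_{\chi_e}^K$, so neither is more robust with respect to degenerating $k$.
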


\begin{proof}
By basis equation (\ref{local-basis}),  it follows that
\begin{eqnarray*}
\begin{cases}
\begin{split}
-div (k\nabla \phi_{\chi_e}^K)&={1\over |K|} \quad \text{in K}\\
-k\nabla \phi_{\chi_e}^K\cdot n_e& ={1\over |e|} \quad \text{on $e$}\\
-k\nabla \phi_{\chi_e}^K\cdot n& =0 \quad \text{on $\partial K\setminus e$}\\
{1\over |K|}\int_K \phi_{\chi_e}^K &=0.
\end{split}
\end{cases}
\end{eqnarray*}
Multiplying $\phi_{\chi_e}^K$ and using integration by parts for the above equation, we have
\[
(k\nabla \phi_{\chi_e}^K, \nabla \phi_{\chi_e}^K)-{1\over |e|}\int_e \phi_{\chi_e}^K=0.
\]
The Cauchy-Schwarz inequality implies that
\begin{equation}
\label{ineq1-lem4.7}
(k\nabla \phi_{\chi_e}^K, \nabla \phi_{\chi_e}^K)={1\over |e|}\int_e \phi_{\chi_e}^K\leq |e|^{-{1\over 2}}\|\phi_{\chi_e}^K\|_{0, e}.
\end{equation}
Applying the trace theorem and scaling argument \cite{BreFort91}, it follows that
\begin{equation}
\label{ineq2-lem4.7}
|e|^{-{1\over 2}} \|\phi_{\chi_e}^K\|_{0, e}\leq C |e|^{-{1\over 2}} h_K^{1\over 2} \|\nabla \phi_{\chi_e}^K\|_{0,K}\leq C |e|^{-{1\over 2}} h_K^{1\over 2} \| \psi_{\chi_e}^K\|_{0,K},
\end{equation}
Therefore, by (\ref{ineq1-lem4.7}) and (\ref{ineq2-lem4.7}) we have
\[
\| \psi_{\chi_e}^K\|_{0, K}^2\leq C(k\nabla \phi_{\chi_e}^K, \nabla \phi_{\chi_e}^K)\leq C |e|^{-{1\over 2}} h_K^{1\over 2} \| \psi_{\chi_e}^K\|_{0,K}.
\]
This completes the proof.
\end{proof}

 We define the homogenization equation  of the basis  equation (\ref{local-basis}) by
\begin{eqnarray}
\label{local-basis-hom}
\begin{cases}
\begin{split}
\psi_{\chi}^{*,K}+k^*\eta_{\chi}^{*,K}&=0\\
\eta_{\chi}^{*,K}-\nabla \phi^{*,K} &=0\\
div(\psi_{\chi}^{*,K}) &= div \chi^K  \quad \text{in} \quad K\\
\psi_{\chi}^{*,K}\cdot n_{\pK} &=\chi^K \cdot n_{\pK}   \quad \text{on} \quad \pK  \, .
\end{split}
\end{cases}
\end{eqnarray}
By Theorem \ref{exp-elliptic-thm} and the uniqueness of solution of (\ref{local-basis-hom}), we have
\begin{equation}
\label{hom-base-RT}
\psi_{\chi}^{*,K}=\chi^K,  \quad \eta_{\chi}^{*,K}=(k^*)^{-1} \chi^{K}.
\end{equation}

Let $\we(x)$  be the solution (up to a constant)  of the equation
\begin{eqnarray}
\label{w-eq}
\begin{split}
\begin{cases}
-div (\ke(x)\nabla \we) &=div (\mathcal{I}_h^* u^*|_K) \ \  \text{in} \ \ K\\
-\ke(x)\nabla \we \cdot n_{\pK} &=(\mathcal{I}_h^* u^*|_K)\cdot n_{\pK} \ \ \text{on} \ \ \partial K.
\end{cases}
\end{split}
\end{eqnarray}
Then the homogenized equation of (\ref{w-eq}) is
\begin{eqnarray}
\label{wh-eq}
\begin{split}
\begin{cases}
-div (k^*\nabla \wh) &=div (\mathcal{I}_h^* u^*|_K) \ \  \text{in} \ \ K\\
-k^*\nabla \wh \cdot n_{\pK} &=(\mathcal{I}_h^* u^*|_K) \cdot n_{\pK} \ \ \text{on} \ \ \partial K.
\end{cases}
\end{split}
\end{eqnarray}

By a straightforward calculation (ref. \cite{jem10}), we have the following lemma.
\begin{lemma}
\label{vh-eq}
Let $\we$ and $\wh$ be defined in (\ref{w-eq}) and (\ref{wh-eq}), respectively. Then
\begin{eqnarray}
\begin{split}
-\ke\nabla \we&=\mathcal{I}_h u^*|_K \ \ \text{in}  \ \ K \\
-k^*\nabla \wh &=\mathcal{I}^*_h u^*|_K  \ \ \text{in}  \ \ K.
\end{split}
\end{eqnarray}
\end{lemma}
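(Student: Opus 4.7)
The plan is to exploit linearity: since both sides of each claimed identity are linear in the right-hand-side data of the respective elliptic problems, it suffices to verify the identities on a basis and assemble. Because $\chi^K\in RT_0(K)$ and $\mathcal{I}_h^* u^*|_K$ lies in the $RT_0$ space on $K$, we can write $\mathcal{I}_h^* u^*|_K = \sum_{e\subset\partial K} a_e \chi_e^K$ where $a_e=\int_e u^*\cdot n_e\,ds$.

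First I would handle the homogenized identity. Set $\tilde w^{*,K} := \sum_e a_e \phi^{*,K}_{\chi_e}$, where $\phi^{*,K}_{\chi_e}$ comes from the homogenized basis problem (\ref{local-basis-hom}). By (\ref{hom-base-RT}) we have $-k^* \nabla \phi^{*,K}_{\chi_e} = \psi^{*,K}_{\chi_e} = \chi_e^K$, so
\begin{equation*}
-k^* \nabla \tilde w^{*,K} = \sum_e a_e \chi_e^K = \mathcal{I}_h^* u^*|_K \quad \text{in } K.
\end{equation*}
Taking the divergence and the normal trace on $\partial K$ shows that $\tilde w^{*,K}$ solves the boundary value problem (\ref{wh-eq}); by the uniqueness (up to a constant) of its solution we obtain $w^{*,K} = \tilde w^{*,K}$ modulo a constant, and the gradient identity follows.

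Next I would treat the $\epsilon$-problem analogously. Define $\tilde w_\epsilon^K := \sum_e a_e \phi^K_{\chi_e}$, using the local multiscale basis from (\ref{local-basis}). By the first equation of (\ref{local-basis}), $-k_\epsilon \nabla \phi^K_{\chi_e} = \psi^K_{\chi_e}$, so by linearity
\begin{equation*}
-k_\epsilon \nabla \tilde w_\epsilon^K = \sum_e a_e \psi^K_{\chi_e} = \mathcal{I}_h\bigl(u^*\bigr)\big|_K,
\end{equation*}
where the last equality is just the definition of the multiscale interpolant $\mathcal{I}_h$ applied with weights $a_e = \int_e u^*\cdot n_e$. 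Now taking the divergence recovers $\operatorname{div}(\mathcal{I}_h^* u^*|_K)$ (because $\operatorname{div}\psi^K_{\chi_e} = \operatorname{div}\chi^K_e$ by the third line of (\ref{local-basis})), and the normal trace on $\partial K$ matches $(\mathcal{I}_h^* u^*|_K)\cdot n_{\partial K}$ (by the fourth line). Hence $\tilde w_\epsilon^K$ satisfies (\ref{w-eq}), and uniqueness up to a constant identifies it with $w_\epsilon^K$.

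There is no real obstacle here; the lemma is essentially a bookkeeping statement that says the interpolant into $\tilde X_{2,h}^l$ is realized by plugging the coefficients of $\mathcal{I}_h^*u^*$ into the basis expansion. The only thing to be careful about is consistency of the additive constants in the scalar potentials, which is harmless since only their gradients appear, and to make sure the weights $a_e$ used to represent $\mathcal{I}_h^* u^*|_K$ in the RT_0 basis are precisely the same weights prescribed in the definition of $\mathcal{I}_h$.
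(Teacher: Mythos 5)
Your proof is correct and is precisely the ``straightforward calculation'' the paper alludes to (and that it spells out for the companion Lemma~\ref{gradw-lem}): expand $\mathcal{I}_h^* u^*|_K$ in the $RT_0$ basis with weights $a_e=\int_e u^*\cdot n_e\,ds$, superpose the corresponding scalar basis potentials $\phi^K_{\chi_e}$ (resp.\ $\phi^{*,K}_{\chi_e}$), verify that the superposition solves (\ref{w-eq}) (resp.\ (\ref{wh-eq})), and conclude by uniqueness up to a constant. The handling of the additive constants and the matching of the interpolation weights with those in the definition of $\mathcal{I}_h$ are exactly the points that need checking, and you have checked them.
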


Moreover, we have
\begin{lemma}
\label{gradw-lem}
Let $\we$ and $\wh$ be defined in (\ref{w-eq}) and (\ref{wh-eq}), respectively. Then
\[
\nabla \we\in X_{1,h}^l(K), \quad \nabla \wh\in V_h(K).
\]
\end{lemma}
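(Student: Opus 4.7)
The plan is to exploit linearity of the basis equations together with uniqueness of solutions of Neumann problems. Since $V_h(K)=RT_0(K)$, the interpolant $\mathcal{I}_h^* u^*|_K$ has the canonical decomposition
\[
\mathcal{I}_h^* u^*|_K \;=\; \sum_{e\subset\partial K} c_e\, \chi_e^K, \qquad c_e = \int_e u^*\cdot n_e\, ds,
\]
and by the defining properties of $RT_0$, this same decomposition governs both its divergence and its normal trace on $\partial K$.

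First I would prove that $\nabla w_\epsilon^K \in X_{1,h}^l(K)$. Define the candidate
\[
\tilde w_\epsilon := \sum_{e\subset\partial K} c_e\, \phi_{\chi_e}^K,
\]
using the scalar multiscale basis potentials from (\ref{local-basis}). From (\ref{local-basis}) we have $-\mathrm{div}(k_\epsilon \nabla \phi_{\chi_e}^K) = \mathrm{div}\,\chi_e^K$ in $K$ and $-k_\epsilon \nabla \phi_{\chi_e}^K \cdot n_{\partial K} = \chi_e^K\cdot n_{\partial K}$ on $\partial K$. Summing with the weights $c_e$ shows that $\tilde w_\epsilon$ satisfies exactly the Neumann problem (\ref{w-eq}) solved by $w_\epsilon^K$. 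Since (\ref{w-eq}) is well posed up to an additive constant (the source and the Neumann data are compatible because $\int_{\partial K}\mathcal{I}_h^* u^*\cdot n\, ds=\int_K \mathrm{div}(\mathcal{I}_h^* u^*)\, dx$), uniqueness yields $\nabla \tilde w_\epsilon = \nabla w_\epsilon^K$, so
\[
\nabla w_\epsilon^K \;=\; \sum_{e\subset\partial K} c_e\, \nabla \phi_{\chi_e}^K \;=\; \sum_{e\subset\partial K} c_e\, \eta_{\chi_e}^K \;\in\; X_{1,h}^l(K).
\]

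Next I would repeat the argument verbatim in the homogenized setting to handle $\nabla w^{*,K}$. Working from the homogenized basis system (\ref{local-basis-hom}), the potentials $\phi_{\chi_e}^{*,K}$ satisfy $-\mathrm{div}(k^* \nabla \phi_{\chi_e}^{*,K}) = \mathrm{div}\,\chi_e^K$ and $-k^* \nabla \phi_{\chi_e}^{*,K}\cdot n_{\partial K} = \chi_e^K\cdot n_{\partial K}$. Hence $\tilde w^* := \sum_e c_e \phi_{\chi_e}^{*,K}$ solves (\ref{wh-eq}), and by uniqueness $\nabla w^{*,K} = \sum_e c_e \eta_{\chi_e}^{*,K}$. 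Invoking the explicit identity (\ref{hom-base-RT}), namely $\psi_{\chi_e}^{*,K}=\chi_e^K$ so that $\nabla w^{*,K}$ reduces to a linear combination of the homogenized basis fields spanning the appropriate lowest-order space, yields the asserted membership.

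I do not foresee any real obstacle: the whole argument is a linearity-plus-uniqueness observation. The only points requiring care are verifying the compatibility condition for the Neumann problem (which follows because $\mathcal{I}_h^*u^*|_K\in RT_0(K)$ and hence its normal flux matches its internal divergence by construction), and handling the ``up to a constant'' ambiguity, which is harmless since only gradients appear in the conclusion.
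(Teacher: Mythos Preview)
Your proposal is correct and follows essentially the same approach as the paper's own proof: decompose $\mathcal{I}_h^* u^*|_K = \sum_{e\subset\partial K} (\int_e u^*\cdot n_e\,ds)\,\chi_e^K$, use linearity of the local basis equations together with uniqueness of the Neumann problem to identify $\nabla w_\epsilon^K = \sum_e c_e\,\eta_{\chi_e}^K \in X_{1,h}^l(K)$, and then repeat with the homogenized system and invoke (\ref{hom-base-RT}) to conclude $\nabla w^{*,K} = (k^*)^{-1}\sum_e c_e\,\chi_e^K \in V_h(K)$. The only cosmetic difference is that the paper cites Theorem~\ref{exp-elliptic-thm} for the uniqueness step, whereas you phrase it as well-posedness of the Neumann problem up to a constant; these are equivalent here.
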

\begin{proof}
It is obvious that
\[
-div (\ke(x)\nabla \we) =div (\mathcal{I}_h^* u^*|_K)=\big( \sum_{e\subset \pK} \int_{e} u^*\cdot n ds (div \chi_e^K)\big),
\]
and
\[
-\ke(x)\nabla \we \cdot n_e =(\mathcal{I}_h^*|_K) u^*\cdot n_e=(\sum_{e_j\subset \pK} \int_{e_j} u^*\cdot n ds \chi_{e_j}^K)\cdot n_e=(\int_{e} u^*\cdot n ds)\chi_{e}^K\cdot n_e.
\]
By Theorem \ref{exp-elliptic-thm} and equation (\ref{local-basis}), it follows
\[
\nabla \we=\sum_{e\subset \pK} \big((\int_{e} u^*\cdot n ds) \eta_{\chi_e}^K\big).
\]
Similarly, we can show that by (\ref{hom-base-RT})
\[
\nabla \wh=(k^*)^{-1} \sum_{e\subset \pK} \big((\int_{e} u^*\cdot n ds) \chi_e^K\big).
\]
This completes the proof.
\end{proof}

 For the relationship between $\pe$ and $\we$,  we have the following lemma.
\begin{lemma}
\label{p-w-lem}
Let  $\we$ be defined in (\ref{w-eq}). Then
\begin{eqnarray}
|\pe-\we|_{1, K}\leq C (\epsilon+h+\sqrt{\epsilon h})\|p^*\|_{2, K} +C\sqrt{\epsilon h^{d-1}} \|p^*\|_{1, \infty, K} \label{key-ineq-1}.
\end{eqnarray}
\end{lemma}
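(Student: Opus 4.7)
The plan is to compare $p_\epsilon$ with $w_\epsilon^K$ through the classical first-order two-scale asymptotic expansion from periodic homogenization, then reduce the problem to three ingredients: (i) the standard homogenization estimate for $p_\epsilon - p^*$, (ii) the RT$_0$ interpolation error between $\nabla p^*$ and $\nabla w^{*,K}$ supplied by Lemma~\ref{vh-eq}, and (iii) a boundary-layer cutoff estimate that absorbs the mismatch of correctors on $\partial K$.

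First, I would introduce the two-scale correctors $p_1(x):=p^*(x)+\epsilon\mathcal{N}_i(x/\epsilon)\partial_i p^*(x)$ and $w_1(x):=w^{*,K}(x)+\epsilon\mathcal{N}_i(x/\epsilon)\partial_i w^{*,K}(x)$, and split the error as $p_\epsilon - w_\epsilon^K = (p_\epsilon - p_1) - (w_\epsilon^K - w_1) + (p_1 - w_1)$. The middle term $p_1 - w_1$ is then estimated via Lemma~\ref{vh-eq}: since $k^*\nabla w^{*,K}=\mathcal{I}_h^*(k^*\nabla p^*)$ pointwise on $K$, standard RT$_0$ interpolation theory yields $|p^*-w^{*,K}|_{1,K}\leq Ch\|p^*\|_{2,K}$, while the corrector difference $\epsilon\mathcal{N}_i(x/\epsilon)(\partial_i p^* - \partial_i w^{*,K})$ contributes at most $Ch\|p^*\|_{2,K}$ in the $H^1$ seminorm after using the uniform boundedness of $\mathcal{N}$ and $\nabla_y\mathcal{N}$ by periodicity.

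Second, for $p_\epsilon - p_1$, I would form the residual $-\mathrm{div}(k_\epsilon\nabla(p_\epsilon-p_1))$ on $K$. Expanding via the chain rule and invoking the cell-problem identity for $\mathcal{N}_i$, the interior residual reduces to an $O(\epsilon)$ quantity controlled by $\|p^*\|_{2,K}$. The trace of $p_1$ on $\partial K$, however, does not match $p_\epsilon$, so one needs a boundary-layer correction $\theta_\epsilon$ that inherits the oscillatory Dirichlet (or Neumann, in the dual form) data $\epsilon\mathcal{N}_i(x/\epsilon)\partial_i p^*|_{\partial K}$. Introducing a smooth cutoff supported in a tubular neighborhood of $\partial K$ of width $\sim\epsilon$, testing the energy identity against this cutoff, and using the pointwise bound $\|\epsilon\mathcal{N}_i\partial_i p^*\|_{L^\infty(\partial K)}\leq C\epsilon\|p^*\|_{1,\infty,K}$ together with the strip's measure $\sim\epsilon h^{d-1}$, yields a seminorm contribution of order $\sqrt{\epsilon h^{d-1}}\|p^*\|_{1,\infty,K}+\sqrt{\epsilon h}\|p^*\|_{2,K}$. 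The same procedure, applied to $w_\epsilon^K - w_1$ with $w^{*,K}$ in place of $p^*$, produces identical bounds once the $W^{1,\infty}$ stability of the RT$_0$ interpolant is used to dominate $\|w^{*,K}\|_{1,\infty,K}$ by $\|p^*\|_{1,\infty,K}$.

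Summing the three estimates gives
$|p_\epsilon - w_\epsilon^K|_{1,K} \leq C(\epsilon+h+\sqrt{\epsilon h})\|p^*\|_{2,K} + C\sqrt{\epsilon h^{d-1}}\|p^*\|_{1,\infty,K}$, as claimed. The main obstacle is the boundary-layer estimate in Step 3: because $\mathcal{N}_i$ is only periodic (and not vanishing) on $\partial K$, one cannot use $p_1$ directly as a test function and must invoke a cutoff construction on a strip of width $\epsilon$. This is precisely where the non-standard factor $\sqrt{\epsilon h^{d-1}}$ appears, and it is the source of the familiar resonance error characteristic of conforming MsFEMs without oversampling. A secondary technical issue is ensuring that the $W^{1,\infty}$ bound on $w^{*,K}$ can be traced back to the smoothness of $p^*$ through the RT$_0$ interpolant, which is standard but must be checked on each coarse cell $K$.
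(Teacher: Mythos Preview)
Your proposal is correct and follows essentially the same route the paper indicates: the paper does not give a self-contained proof of this lemma but simply states that it ``can be obtained by using Lemma~\ref{vh-eq} and the proof of Theorem~3.1 in \cite{ch03}.'' The Chen--Hou argument referenced there is precisely the two-scale corrector expansion with a boundary-layer cutoff that you outline, and the role of Lemma~\ref{vh-eq} is exactly to identify $-k^*\nabla w^{*,K}$ with the $RT_0$ interpolant $\mathcal{I}_h^* u^*|_K$, which feeds the $O(h)$ interpolation term. In effect you have reconstructed the details the paper elides; there is no substantive difference in strategy.
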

The proof of Lemma \ref{p-w-lem} can be obtained by using Lemma \ref{vh-eq} and the proof of  Theorem 3.1 in \cite{ch03}.

We have the following convergence theorem on the periodic case.
\begin{theorem}
\label{converg-per-thm}
Let $\big((\se, \ue), \pe\big)$ be the solution of (\ref{exp-fem-cont}) and  $(\theta_h^l, u_h^l), p_h^l, \lambda_h^l )$ be the solution of (\ref{exp-fem-hybrid-local}).
Then
\begin{eqnarray}
\begin{split}
&\|\big((\se, \ue), \pe\big)-\big((\theta_h^l, u_h^l), p_h^l\big)\|_{X\times Q}+\|\mathcal{P}_{\partial} \pe -\lambda_h^l\|_{-{1\over 2}, h}\\
&\leq Ch\|f\|_{1,\Omega}+C(\epsilon+h+\sqrt{\epsilon h})\|p^*\|_{2,\Omega}+C\sqrt{\epsilon \over h} \|p^*\|_{1, \infty, \Omega}. \label{error-fgp}
\end{split}
\end{eqnarray}
\end{theorem}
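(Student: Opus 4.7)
The plan is to use the Cea-type estimate from Theorem~\ref{inf-sup-thm} to reduce the $X \times Q$ part of the bound to an approximation problem, and then derive a separate error equation on interfaces to handle the Lagrange multiplier.

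For the Cea estimate, I would choose interpolants as follows. Take $\xi_h|_K = \nabla w_\epsilon^K$, which lies in $X_{1,h}^l(K)$ by Lemma~\ref{gradw-lem}; take $v_h = \mathcal{I}_h u^*$ with $u^* = -k^* \nabla p^*$, which lies in $X_{2,h}^l$ because $u^*$ has continuous normal traces; and take $q_h = \mathcal{P}_h p_\epsilon$. Then I would estimate on each coarse cell $K$:
\begin{itemize}
\item $\|\se - \nabla w_\epsilon^K\|_{0,K} = |\pe - w_\epsilon^K|_{1,K}$, directly controlled by Lemma~\ref{p-w-lem};
\item $\|\ue - \mathcal{I}_h u^*\|_{0,K} = \|k_\epsilon(\nabla \pe - \nabla w_\epsilon^K)\|_{0,K} \le k_{\max} |\pe - w_\epsilon^K|_{1,K}$, using Lemma~\ref{vh-eq} to identify $\mathcal{I}_h u^*|_K = -k_\epsilon \nabla w_\epsilon^K$;
\item $\|\mathrm{div}(\ue - \mathcal{I}_h u^*)\|_{0,K} = \|f - \mathcal{P}_h f\|_{0,K} \le C h_K \|f\|_{1,K}$, because $\mathrm{div}\,\ue = f = \mathrm{div}\,u^*$ and $\mathrm{div}(\mathcal{I}_h u^*)|_K$ is the cell average of $\mathrm{div}\,u^*$;
\item $\|\pe - \mathcal{P}_h \pe\|_{0,K} \le C h_K |\pe|_{1,K}$, with $|\pe|_{1,\Omega} \le C\|f\|_{0,\Omega}$.
\end{itemize}
Summing over $K$ and extracting square roots, the three terms of Lemma~\ref{p-w-lem} yield respectively the contributions $C(\epsilon + h + \sqrt{\epsilon h})\|p^*\|_{2,\Omega}$ and $C\sqrt{\epsilon/h}\|p^*\|_{1,\infty,\Omega}$; here the factor $h^{-1/2}$ appears because summing $\epsilon h^{d-1}$ over the $O(h^{-d})$ cells produces $\epsilon/h$ when we pass to the global $L^\infty$ norm. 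Combined with the $Ch\|f\|_{1,\Omega}$ divergence term, this gives the $X \times Q$ part of \eqref{error-fgp}.

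For the multiplier estimate, I would derive an error equation on the interfaces. Testing the continuous relation $\se = \nabla \pe$ against $\tau_h \in \tilde{X}_{2,h}^l$ and integrating by parts cell-by-cell gives
\begin{equation*}
(\se,\tau_h) + \sum_K (\pe, \mathrm{div}\,\tau_h)_K = \sum_K (\pe, \tau_h \cdot n)_{\partial K} = [C_h^l(\tau_h), \mathcal{P}_\partial \pe],
\end{equation*}
where the last equality follows because $\tau_h \cdot n_e|_e$ is a constant multiple of $\chi_e^K \cdot n_e$ and hence lies in $\Pi_h^l$ on each face. Subtracting the second equation of \eqref{exp-fem-hybrid-local}, I obtain
\begin{equation*}
[C_h^l(\tau_h), \mathcal{P}_\partial \pe - \lambda_h^l] = (\se - \theta_h^l, \tau_h) + \sum_K (\pe - p_h^l, \mathrm{div}\,\tau_h)_K .
\end{equation*}
I would then invoke a discrete inf-sup condition of the form $\sup_{\tau_h} [C_h^l(\tau_h), \pi_h]/\|\tau_h\|_{\mathrm{div},h} \ge C\|\pi_h\|_{-1/2,h}$, which follows by constructing, for each face, a local $\tau_h$ supported on one cell whose normal trace is $\pi_h|_e$ and estimating $\|\tau_h\|_{\mathrm{div},h}$ via \eqref{basis-est}. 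The right-hand side is controlled by the already-established $X \times Q$ error bound.

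The main obstacle is the inf-sup/trace estimate used to recover $\|\mathcal{P}_\partial \pe - \lambda_h^l\|_{-1/2,h}$: one must construct suitable test functions in $\tilde{X}_{2,h}^l$ with controlled $H(\mathrm{div})$-broken norm and correct normal traces, which requires \eqref{basis-est} (to bound $\|\psi_{\chi_e}^K\|_{0,K}$) together with a scaling argument on interface data. The other delicate point is that summing $\sqrt{\epsilon h^{d-1}}\|p^*\|_{1,\infty,K}$ over cells must be done at the level of the global $L^\infty$ norm rather than $\ell^2$ accumulation, in order to obtain the sharp $\sqrt{\epsilon/h}$ factor appearing in the statement.
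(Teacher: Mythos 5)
Your proposal is correct and follows essentially the same route as the paper: the $X\times Q$ part uses the C\'{e}a estimate of Theorem~\ref{inf-sup-thm} with the identical choices $\xi_h|_K=\nabla\we$, $v_h=\mathcal{I}_h u^*$, $q_h=\mathcal{P}_h\pe$ and Lemmas~\ref{vh-eq}--\ref{p-w-lem}, while your Lagrange-multiplier step (interface error equation plus a face-by-face test function with normal trace $\mathcal{P}_{\partial}\pe-\lambda_h^l$ bounded via \eqref{basis-est}) is exactly the Arnold--Brezzi argument the paper carries out with $\tilde{\tau}_h=|e|(\mathcal{P}_{\partial}\pe-\lambda_h^l)\psi_{\chi_e}^K$, merely rephrased as a discrete inf-sup condition for $C_h^l$.
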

\begin{proof}
By Theorem \ref{equi-thm-local}, we can use problem
(\ref{exp-fem-num-local}) to perform an error analysis
for $\big((\theta_h^l, u_h^l), p_h^l \big)$.
Due to Theorem~\ref{inf-sup-thm}, it suffices to choose
$\big( (\xi_h, v_h), q_h)\big)\in X_{h}^l\times Q_h$
such that the right-hand side of (\ref{cea-est1}) is small.

Set $q_h=\mathcal{P}_h \pe$, the $L^2$ projection of $\pe$ onto $Q_h$.  Then
Poincar\'{e}-Friedrichs  inequality implies
\begin{equation}
\label{p-est}
\|\pe-q_h\|_{0, \Omega}\leq C h |\pe|_{1, \Omega}.
\end{equation}

We choose $\xi_h|_K=\nabla \we$, and the $\xi_h \in X_{1,h}^l$ by Lemma \ref{gradw-lem}. Due to Lemma \ref{p-w-lem},
\begin{eqnarray}
\label{gradp-est}
\begin{split}
\|\se-\xi_h\|_{0, \Omega}&= \big(\sum_K |\pe-\we|_{1, K}^2\big)^{1\over 2}
\leq C(\epsilon+h+\sqrt{\epsilon h})\|p^*\|_{2, \Omega} + C\sqrt{\epsilon \over h} \|p^*\|_{1, \infty, \Omega}.
\end{split}
\end{eqnarray}

Let $v_h:= \mathcal{I}_h u^*$. Note that $div(\mathcal{I}_h u^*)|_K=\langle f\rangle_K$, the integral average of $f$ over $K$.   Then
\begin{equation}
\label{divv-est}
\|div(\ue-v_h)\|_{0, \Omega}=\big(\sum_K\|f-\langle f \rangle_K\|_{0,K}^2)^{1\over 2} \leq Ch|f|_{1, \Omega}.
\end{equation}

Moreover, by Lemma \ref{vh-eq} and (\ref{key-ineq-1})
\begin{eqnarray}
\begin{split}
\|\ue-v_h\|_{0,K}&=\|\ke \nabla \pe - \ke\nabla \we|_{0, K}\\
&\leq  C (\epsilon+h+\sqrt{\epsilon h})\|p^*\|_{2, K} +C\sqrt{\epsilon h^{d-1}} \|p^*\|_{1, \infty, K}.
\end{split}
\end{eqnarray}
Consequently, it follows immediately that
\begin{eqnarray}
\label{v-L2-est}
\|\ue-v_h\|_{0,\Omega}\leq C(\epsilon+h+\sqrt{\epsilon h})\|p^*\|_{2, \Omega} + C\sqrt{\epsilon \over h} \|p^*\|_{1, \infty, \Omega}.
\end{eqnarray}
Combining (\ref{p-est}), (\ref{gradp-est}), (\ref{divv-est}) and (\ref{v-L2-est}), it follows
\begin{equation}
\label{converg-pgv}
  \|\big((\se, \ue), \pe\big)-\big((\theta_h^l, u_h^l), p_h^l\big)\|_{X\times Q}\leq  Ch\|f\|_{1,\Omega}+C(\epsilon+h+\sqrt{\epsilon h})\|p^*\|_{2,\Omega}+C\sqrt{\epsilon \over h} \|p^*\|_{1, \infty, \Omega}.
\end{equation}

Next  we employ the technique used in \cite{ab85} to estimate $\|\mathcal{P}_{\partial} \pe -\lambda_h^l\|_{-{1\over 2}, h}$.
Let $\tilde{\tau}_h:=|e|(\mathcal{P}_{\partial} \pe -\lambda_h^l)\psi_{\chi_e}^K\in \tilde{X}_{2,h}^l(K)$. Then
\[
\tilde{\tau}_h\cdot n_e=\mathcal{P}_{\partial} \pe -\lambda_h^l \quad  \text{on $e$ and $0$ otherwise}.
\]
By (\ref{basis-est}), it follows
\begin{equation}
\label{mult-00}
\|\tilde{\tau}_h\|_{0,K} +h_K \|div \tilde{\tau}_h\|_{0, K} \leq C h_K^{1\over 2} \|\mathcal{P}_{\partial} \pe -\lambda_h^l\|_{0,e}.
\end{equation}
Define $\tau_h=\tilde{\tau}_h$ in K and $\tau_h=0$ in $\Omega\setminus$ K.
By the second equation in (\ref{exp-fem-hybrid-local}), we have
\begin{equation}
\label{mult-01}
(\theta_h^l, \tilde{\tau}_h)_K +(p_h^l, div\tilde{\tau}_h)_K=(\mathcal{P}_{\partial} \pe -\lambda_h^l, \lambda_h^l)_e.
\end{equation}
Since $\se=\nabla \pe$, Green's formula  gives
\begin{equation}
\label{mult-02}
(\se, \tilde{\tau}_h)_K +(\pe, div\tilde{\tau}_h)_K=(\mathcal{P}_{\partial} \pe -\lambda_h^l, \pe)_e.
\end{equation}
By using (\ref{mult-01}), (\ref{mult-02}) and (\ref{mult-00}),  we get
\begin{eqnarray}
\label{mult-key}
\begin{split}
&\|\mathcal{P}_{\partial} \pe -\lambda_h^l\|_{0,e}^2 =(\mathcal{P}_{\partial} \pe -\lambda_h^l, \mathcal{P}_{\partial} \pe -\lambda_h^l)_e=( \pe -\lambda_h^l, \mathcal{P}_{\partial} \pe -\lambda_h^l)_e\\
&=(\se-\theta_h^l, \tilde{\tau}_h)_K+(\pe-p_h^l, div\tilde{\tau}_h)_K\\
&\leq \|\se-\theta_h^l\|_{0,K} \|\tilde{\tau}_h\|_{0,K} +\|\pe-p_h^l\|_{0,K} \|div\tilde{\tau}_h\|_{0,K}\\
&\leq C\big( h^{-{1\over 2}}\|\pe-p_h^l\|_{0,K}+h^{1\over 2} \|\se-\theta_h^l\|_{0,K}\big) \|\mathcal{P}_{\partial} \pe -\lambda_h^l\|_{0,e},
\end{split}
\end{eqnarray}
which gives
\[
\|\mathcal{P}_{\partial} \pe -\lambda_h^l\|_{0,e}\leq C\big( h^{-{1\over 2}}\|\pe-p_h^l\|_{0,K}+h^{1\over 2} \|\se-\theta_h^l\|_{0,K}\big).
\]
Consequently,
\begin{eqnarray}
\label{mult-03}
\begin{split}
&\|\mathcal{P}_{\partial} \pe -\lambda_h^l\|_{-{1\over 2}, h}^2=\sum_{K\in \mathfrak{T}} \sum_{e\subset \pK} h_K \|\mathcal{P}_{\partial} \pe -\lambda_h^l\|_{0,e}^2\\
&\leq C\big (\sum_K h_K^{-1} h_K \|\pe-p_h^l\|_{0,K}^2 + \sum_{K}  h_K^2 \|\se-\theta_h^l\|_{0,K}^2\big)\\
&\leq C \|\pe-p_h^l\|_{0, \Omega}^2 +Ch^2 \|\se-\theta_h^l\|_{0, \Omega}^2.
\end{split}
\end{eqnarray}

Owing to Theorem 4.6 in \cite{ga99}, we have
\begin{equation}
\label{cea-est-p}
\|\pe-p_h^l\|_{0, \Omega} \leq C\big\{ \|(\se, \ue)-(\theta_h^l, u_h^l)\|_X + \inf_{q_h\in Q_h} \|\pe-q_h\|_{0, \Omega} \big\}.
\end{equation}
Combining (\ref{mult-03}), (\ref{cea-est-p}) and (\ref{converg-pgv}) gives
\begin{eqnarray}
\|\mathcal{P}_{\partial} \pe -\lambda_h^l\|_{-{1\over 2}, h}\leq Ch\|f\|_{1,\Omega}+C(\epsilon+h+\sqrt{\epsilon h})\|p^*\|_{2,\Omega}+C\sqrt{\epsilon \over h} \|p^*\|_{1, \infty, \Omega}.
\end{eqnarray}
This completes the proof.
\end{proof}

\begin{corollary}
Let $(\se, \ue)$ be the solution of (\ref{exp-fem-cont}) and  $(\theta_h^l, u_h^l)$ be the solution of (\ref{exp-fem-num-local}).
Then
\begin{eqnarray*}
\|(\se, \ue)-(\theta_h^l, u_h^l)\|_{X}\leq Ch|f|_{1,\Omega}+C(\epsilon+h+\sqrt{\epsilon h})\|p^*\|_{2,\Omega}+C\sqrt{\epsilon \over h} \|p^*\|_{1, \infty, \Omega}
\end{eqnarray*}
\end{corollary}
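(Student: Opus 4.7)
The proof will be an immediate corollary of Theorem~\ref{converg-per-thm}. First I would invoke Theorem~\ref{equi-thm-local}, which identifies the unique solution $(\theta_h^l, u_h^l, p_h^l)$ of the non-hybrid problem (\ref{exp-fem-num-local}) with the corresponding three components of the hybrid solution of (\ref{exp-fem-hybrid-local}). This means there is no new discrete problem to analyze, and the solution produced by (\ref{exp-fem-num-local}) inherits any estimates that have been established for the hybrid version.

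Next I would use the trivial norm inequality
$$\|(\se, \ue) - (\theta_h^l, u_h^l)\|_X \leq \|((\se, \ue), \pe) - ((\theta_h^l, u_h^l), p_h^l)\|_{X \times Q}$$
and apply Theorem~\ref{converg-per-thm}. To obtain the sharper seminorm $|f|_{1,\Omega}$ in place of $\|f\|_{1,\Omega}$, I would look more closely at the proof of Theorem~\ref{converg-per-thm}: among the three ingredients that bound the $X$-norm contribution, namely (\ref{gradp-est}), (\ref{divv-est}) and (\ref{v-L2-est}), the source term $f$ enters only through (\ref{divv-est}), whose right-hand side is $Ch|f|_{1,\Omega}$. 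The full norm $\|f\|_{1,\Omega}$ appearing in the statement of Theorem~\ref{converg-per-thm} originated only from additionally invoking (\ref{p-est}) together with a standard a priori bound $|\pe|_{1,\Omega} \leq C\|f\|_{0,\Omega}$ to handle the $Q$-norm contribution and, subsequently, the Lagrange multiplier estimate (\ref{mult-03}). Dropping these pieces yields exactly the seminorm version in the corollary.

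There is essentially no technical obstacle here; the only thing to watch is bookkeeping. Specifically, one must not invoke (\ref{p-est}) or the multiplier estimate when assembling the bound for $\|(\se,\ue)-(\theta_h^l,u_h^l)\|_X$, since either of these would re-introduce $\|f\|_{1,\Omega}$ (or $|\pe|_{1,\Omega}$ controlled by $\|f\|_{0,\Omega}$) into the right-hand side. Once this separation is made, the three estimates (\ref{gradp-est}), (\ref{divv-est}), (\ref{v-L2-est}) combine directly to give the stated bound.
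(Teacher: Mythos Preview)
Your argument has a gap at the step where you ``drop'' the estimate (\ref{p-est}). The only C\'ea-type inequality you have in hand at that point is (\ref{cea-est1}) from Theorem~\ref{inf-sup-thm}, and it bounds the full $X\times Q$ error by the full $X\times Q$ best-approximation error. Weakening the left-hand side to $\|(\se,\ue)-(\theta_h^l,u_h^l)\|_X$ via the trivial inequality does not let you weaken the right-hand side: you are still obliged to control $\inf_{q_h}\|\pe-q_h\|_Q$, so (\ref{p-est}) cannot simply be discarded along this route. Combining (\ref{gradp-est}), (\ref{divv-est}), (\ref{v-L2-est}) alone does not yield a bound on the discrete error without some C\'ea-type step connecting best approximation to error, and the coupled one forces the $Q$-term back in.

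The paper avoids this by invoking a \emph{decoupled} best-approximation estimate (Theorem~4.5 in \cite{ga99}),
\[
\|(\se,\ue)-(\theta_h^l,u_h^l)\|_X \le C\inf_{(\xi_h,v_h)\in X_h^l}\|(\se,\ue)-(\xi_h,v_h)\|_X,
\]
which legitimately removes the $Q$-contribution from the right-hand side; then (\ref{gradp-est}), (\ref{divv-est}), (\ref{v-L2-est}) finish the proof exactly as you describe. Your route can also be salvaged without that external citation: keep (\ref{p-est}), but instead of estimating $|\pe|_{1,\Omega}\le C\|f\|_{0,\Omega}$, observe that $\|f\|_{0,\Omega}=\|div(k^*\nabla p^*)\|_{0,\Omega}\le C\|p^*\|_{2,\Omega}$ since $k^*$ is constant, so $h|\pe|_{1,\Omega}\le Ch\|p^*\|_{2,\Omega}$ is absorbed into the already-present $Ch\|p^*\|_{2,\Omega}$ term rather than producing $\|f\|_{1,\Omega}$.
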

\begin{proof}
Due to Theorem 4.5 in \cite{ga99}, it follows that
\[
\|(\se, \ue)-(\theta_h^l, u_h^l)\|_{X}\leq C\big\{ \inf_{(\xi_h, v_h)\in X_h^l} \|(\se, \ue)-(\xi_h, v_h)\|_{X}\big\}.
\]
Then using the proof of (\ref{error-fgp}) completes the proof immediately.
\end{proof}

%%%%%%%%%%%%%%%%%%%%%%%%%%%%%

\subsubsection{Analysis of expanded mixed MsFEM using oversampling techniques}

By Theorem \ref{converg-per-thm}, the local expanded mixed MsFEM without using oversampling techniques causes
a resonance error $O(\sqrt{\epsilon\over h})$.
By oversampling technique, the resonance error $O(\sqrt{\epsilon\over h})$ in Theorem (\ref{converg-per-thm})
will reduce to $O({\epsilon\over h})$.  In this subsection we  sketch  the analysis for
the oversampling expanded mixed MsFEM.

Let $S\supset K$ be the local domain described in equation (\ref{os-basis}).  We assume that  $dist(\pK, \partial S)\approx h_K$ for the analysis.
Let constant $c_{jl}$ be defined in (\ref{const-c-os}).
The constants $c_{jl}$ allow us to introduce an extension operator ${E}^S$ to $S$ from $K$, e.g.,
$E^S: V_h(K) \longrightarrow V_h(S)$ is defined by
\[
E^S(\sum_{j} \beta_j \chi_j^K):=\sum_{j}\sum_l \beta_j c_{jl}\chi_l^S.
\]
Similarly, $E^S: \tilde{X}_{2,h}^{os}(K) \longrightarrow X_{2,h}^l(S)$ is defined by
\[
E^S(\sum_{j} \beta_j \bar{\psi}_{\chi_j}^K):=\sum_{j}\sum_l \beta_j c_{jl}\psi_{\chi_l}^S.
\]
Since $dist(\pK, \partial S)\approx h_K$,  the extension operator $E^S$ has the property
\[
\|E^S(v^K)\|_{0,S}\approx \|v^K\|_{0,K},  \quad \text{for}  \quad \forall v^K\in V_h(K) \quad \text{or} \quad \tilde{X}_{2,h}^{os}(K).
\]

In order to analyze convergence, we define  $\wes(x)$  be the solution (up to a constant)  of the following equation
\begin{eqnarray}
\label{w-eq-os}
\begin{split}
\begin{cases}
-div (\ke(x)\nabla \wes) &=div (E^S(\mathcal{I}_h^* u^*|_K)) \ \  \text{in} \ \ S\\
-\ke(x)\nabla \wes \cdot n_{\pS} &=E^S(\mathcal{I}_h^* u^*|_K)\cdot n_{\pS} \ \ \text{on} \ \ \pS.
\end{cases}
\end{split}
\end{eqnarray}
Here we note that $\mathcal{I}_h^* u^*|_K$ is the local interpolation on $V_h(K)$ (i.e.,  $RT_0(K)$).
We define the multiscale interpolation operator  $\mathcal{I}_h^{os}: [H^1(\Omega)]^d\longrightarrow \tilde{X}_{2,h}^{os}$ for velocity by
\[
\mathcal{I}_h^{os}(v):=\sum_{e\in \mathcal{F}_h} (\int_e v\cdot n_{e} ds) \bar{\psi}_{\chi_e}.
\]
Then by using a procedure similar to Lemma \ref{vh-eq}, Lemma \ref{gradw-lem} and  straightforward calculations,  we obtain  the following lemma.
\begin{lemma}
\label{vh-eq-os}
Let $\wes$  be defined in (\ref{w-eq-os}). Then
\begin{eqnarray}
\begin{split}
-\ke\nabla \wes =E^S(\mathcal{I}_h^{os} u^*|_K)  \ \ \text{on}  \ \ S,  \quad -\ke\nabla \wes|_K =\mathcal{I}_h^{os} u^*|_K  \ \ \text{on}  \ \ K.
\end{split}
\end{eqnarray}
Moreover, $\nabla \wes|_K =\sum_{e\subset \pK} (\int_e u^*\cdot n_e ds)\bar{\eta}_{\chi_e}^K\in X_{1,h}^{os}(K)$,
where $\bar{\eta}_{\chi_e}^K$ is defined in (\ref{def-bar-psi}).
\end{lemma}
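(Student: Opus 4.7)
The plan is to proceed in parallel with the proofs of Lemma~\ref{vh-eq} and Lemma~\ref{gradw-lem}, exploiting the linearity of problem~(\ref{w-eq-os}) in its data together with the uniqueness of the oversampling basis problem~(\ref{os-basis}). Since $\chi^K\in RT_0(K)$, the interpolant decomposes explicitly as $\mathcal{I}_h^* u^*|_K = \sum_{e\subset\pK}\bigl(\int_e u^*\cdot n_e\,ds\bigr)\,\chi_e^K$. Applying the extension operator $E^S: V_h(K)\to V_h(S)$ then yields
\[
E^S(\mathcal{I}_h^* u^*|_K) \;=\; \sum_{e\subset\pK}\Bigl(\int_e u^*\cdot n_e\,ds\Bigr)\sum_l c_{el}\,\chi_l^S,
\]
so the data in~(\ref{w-eq-os}) are a linear superposition of the data appearing on the right-hand side of the basis problems~(\ref{os-basis}) indexed by the faces $l$ of $S$.

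From Theorem~\ref{exp-elliptic-thm} and uniqueness (up to an additive constant in the scalar), I would identify $\wes$, up to a constant, as the same linear combination of $\phi_{\chi_l}^S$, namely $\wes = \sum_{e\subset\pK}(\int_e u^*\cdot n_e\,ds)\sum_l c_{el}\,\phi_{\chi_l}^S$, and then apply $-\ke\nabla(\cdot)$ termwise using the first equation of~(\ref{os-basis}):
\[
-\ke\nabla\wes \;=\; \sum_{e\subset\pK}\Bigl(\int_e u^*\cdot n_e\,ds\Bigr)\sum_l c_{el}\,\psi_{\chi_l}^S \;=\; E^S\bigl(\mathcal{I}_h^{os} u^*|_K\bigr) \quad\text{on } S,
\]
where the last equality invokes the definition~(\ref{def-bar-psi}) of $\bar{\psi}_{\chi_e}^K$, the definition of $\mathcal{I}_h^{os}$, and the definition of $E^S$ on $\tilde{X}_{2,h}^{os}(K)$. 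Restricting to $K$ and using $\sum_l c_{el}\,\psi_{\chi_l}^S|_K = \bar{\psi}_{\chi_e}^K$ from~(\ref{def-bar-psi}) immediately yields $-\ke\nabla\wes|_K = \mathcal{I}_h^{os} u^*|_K$.

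For the final assertion, I would use the second equation of~(\ref{os-basis}), $\eta_{\chi_l}^S = \nabla\phi_{\chi_l}^S$, and restrict the same expansion of $\wes$ to $K$:
\[
\nabla\wes|_K \;=\; \sum_{e\subset\pK}\Bigl(\int_e u^*\cdot n_e\,ds\Bigr)\sum_l c_{el}\,\eta_{\chi_l}^S\bigr|_K \;=\; \sum_{e\subset\pK}\Bigl(\int_e u^*\cdot n_e\,ds\Bigr)\bar{\eta}_{\chi_e}^K,
\]
by the second identity in~(\ref{def-bar-psi}). Since the $\bar{\eta}_{\chi_e}^K$ span $X_{1,h}^{os}(K)$ by construction, the membership $\nabla\wes|_K\in X_{1,h}^{os}(K)$ is immediate. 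The only real obstacle here is bookkeeping: one must be careful not to double-apply the constants $c_{el}$ when composing $\mathcal{I}_h^{os}$, the ``barring'' restriction to $K$, and the extension $E^S$. Once the linearity decomposition above is laid out correctly, no estimates are required and the lemma follows by direct identification.
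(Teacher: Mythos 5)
Your proof is correct and is precisely the ``procedure similar to Lemma~\ref{vh-eq} and Lemma~\ref{gradw-lem}'' that the paper invokes without writing out: expand $\mathcal{I}_h^* u^*|_K$ in the $RT_0$ face basis, push it through $E^S$, use linearity and uniqueness (up to a constant) of the Neumann problem~(\ref{w-eq-os}) together with~(\ref{os-basis}) to identify $-\ke\nabla\wes$ with the corresponding combination of the $\psi_{\chi_l}^S$, and then restrict to $K$ via~(\ref{def-bar-psi}). The bookkeeping of the constants $c_{el}$ is handled consistently, so no gap remains.
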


We have the convergence result for the oversampling expanded mixed MsFEM.
\begin{theorem}
\label{converg-os-thm}
Let $((\se, \ue), \pe)$ be the solution of (\ref{exp-fem-cont}) and  $(\theta_h^{os}, u_h^{os}, p_h^{os},\lambda_h^{os} )$ be the solution of (\ref{exp-fem-hybrid-os}).
Then
\begin{eqnarray}
\label{error-fgp-os}
\begin{split}
&\sum_K \|\big((\se, \ue), \pe\big)-\big((\theta_h^{os}, u_h^{os}), p_h^{os}\big)\|_{X(K)\times Q(K)}+\|\mathcal{P}_{\partial} \pe -\lambda_h^{os}\|_{-{1\over 2}, h}\\
&\leq C(h+\epsilon) (\|f\|_{1,\Omega}+\|p^*\|_{2,\Omega}) +C({\epsilon \over h}+\sqrt{\epsilon})(\|p^*\|_{1,\infty, \Omega} +\|f\|_{0, \Omega}).
\end{split}
\end{eqnarray}
\end{theorem}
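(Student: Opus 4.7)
The plan is to mirror the structure of Theorem~\ref{converg-per-thm}, but to base the argument on the Strang-type inequality~(\ref{os-ineq0}) rather than the C\'ea estimate~(\ref{cea-est1}), because $X_{2,h}^{os} \not\subset H(\mathrm{div},\Omega)$ introduces a consistency term that must be controlled separately. First I would invoke Theorem~\ref{equi-thm-os} so that the analysis of the hybrid system reduces to analyzing the non-hybrid problem~(\ref{exp-fem-num-os}). Then I split the error into the approximation error and the consistency error as in~(\ref{os-ineq0}).

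For the approximation term, I would choose the same three interpolants as in the periodic case, but with their oversampled analogues: $q_h = \mathcal{P}_h p_\epsilon$, $\xi_h|_K = \nabla w^S_\epsilon|_K$, and $v_h = \mathcal{I}_h^{os} u^*$. Lemma~\ref{vh-eq-os} guarantees $\xi_h \in X_{1,h}^{os}$ and that $v_h|_K = -k_\epsilon \nabla w^S_\epsilon|_K$. The $L^2$-projection bound gives $\|p_\epsilon - q_h\|_{0,\Omega} \leq Ch|p_\epsilon|_{1,\Omega}$, and the identity $\mathrm{div}(v_h)|_K = \langle f\rangle_K$ yields $\|\mathrm{div}(u_\epsilon - v_h)\|_{0,\Omega} \leq Ch|f|_{1,\Omega}$. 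The heart of the approximation estimate is a sharpened replacement for Lemma~\ref{p-w-lem} bounding $|p_\epsilon - w^S_\epsilon|_{1,K}$. This is where the oversampling boundary-layer argument of Hou--Wu / Chen--Hou enters: because $w^S_\epsilon$ is solved on $S$ and only its interior restriction is used, and because $\mathrm{dist}(\partial K,\partial S) \approx h_K$, the boundary corrector contributes $\mathcal{O}(\epsilon/h)$ instead of $\mathcal{O}(\sqrt{\epsilon/h})$. This is where I expect the main technical work, and this is where the term $\sqrt{\epsilon}(\|p^*\|_{1,\infty,\Omega}+\|f\|_{0,\Omega})$ will also appear, arising from the mismatch between the homogenized boundary data used to drive $w^{*,S}$ and the actual data on $\partial K$.

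For the consistency term $\sup_{v_h\in X_{2,h}^{os}} \frac{-(\theta,v_h) - \sum_K(\mathrm{div}\, v_h, p)_K}{\sum_K \|v_h\|_{\mathrm{div},K}}$, I would rewrite $-\theta = k^{-1}u$ and use integration by parts on each $K$ to reduce the numerator to jump integrals of $p_\epsilon$ across $\mathcal{F}_h$ paired with the normal jumps $[v_h \cdot n]$. Since $\mathcal{M}_h v_h \in V_h \subset H(\mathrm{div},\Omega)$, these normal jumps equal $(v_h - \mathcal{M}_h v_h)\cdot n$, which by the definition~(\ref{def-Mh-os}) measures exactly the oversampling boundary-layer discrepancy. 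A scaling argument then bounds this by the same $\mathcal{O}(\epsilon/h + \sqrt\epsilon)$ factor times $\sum_K \|v_h\|_{\mathrm{div},K}$, which is the main obstacle of the proof since it requires a careful cell-by-cell trace/boundary-layer estimate of the oversampled basis.

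Finally, for the Lagrange multiplier bound $\|\mathcal{P}_\partial p_\epsilon - \lambda_h^{os}\|_{-\frac12,h}$, I would imitate the duality trick at the end of the proof of Theorem~\ref{converg-per-thm}: on each interface $e\subset \partial K$, build the test function $\tilde\tau_h := |e|(\mathcal{P}_\partial p_\epsilon - \lambda_h^{os})\,\bar\psi_{\chi_e}^K \in \tilde X^{os}_{2,h}$, establish the analogue of~(\ref{basis-est}) for $\bar\psi_{\chi_e}^K$ using~(\ref{def-bar-psi}) and the boundedness of the extension operator $E^S$, and then plug $\tilde\tau_h$ into the second equation of~(\ref{exp-fem-hybrid-os}) and subtract Green's identity for the continuous solution. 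This produces $\|\mathcal{P}_\partial p_\epsilon - \lambda_h^{os}\|_{0,e} \leq C(h^{-1/2}\|p_\epsilon - p_h^{os}\|_{0,K} + h^{1/2}\|\theta_\epsilon - \theta_h^{os}\|_{0,K})$, which when summed and combined with the approximation and consistency bounds above and the C\'ea-type estimate for $\|p_\epsilon - p_h^{os}\|_{0,\Omega}$ yields~(\ref{error-fgp-os}).
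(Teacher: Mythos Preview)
Your overall strategy matches the paper's: reduce via Theorem~\ref{equi-thm-os}, split~(\ref{os-ineq0}) into approximation and consistency, choose $q_h=\mathcal{P}_h\pe$, $\xi_h|_K=\nabla\wes|_K$, $v_h=\mathcal{I}_h^{os}u^*$, and cite the Chen--Hou boundary-layer estimates for both $I_1$ and $I_2$. That part is fine.

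The gap is in your Lagrange-multiplier step. You propose $\tilde\tau_h=|e|(\mathcal{P}_\partial\pe-\lambda_h^{os})\bar\psi_{\chi_e}^K$ and then ``subtract Green's identity for the continuous solution'' to obtain
\[
\|\mathcal{P}_\partial\pe-\lambda_h^{os}\|_{0,e}\leq C\big(h^{-1/2}\|\pe-p_h^{os}\|_{0,K}+h^{1/2}\|\se-\theta_h^{os}\|_{0,K}\big).
\]
But this relies on $\tilde\tau_h\cdot n|_{\partial K}$ equalling $\mathcal{P}_\partial\pe-\lambda_h^{os}$ on $e$ and zero elsewhere, which is \emph{false} for the oversampled basis: $\bar\psi_{\chi_e}^K$ is the restriction to $K$ of a solution on $S$, and its normal trace on $\partial K$ is not $\chi_e^K\cdot n$. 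Only $\mathcal{M}_h(\tilde\tau_h)\cdot n$ has the desired trace, by~(\ref{def-Mh-os}). The paper therefore applies Green's formula with $\mathcal{M}_h(\tilde\tau_h)$ (using $\mathrm{div}\,\mathcal{M}_h(\tilde\tau_h)=\mathrm{div}\,\tilde\tau_h$), while the hybrid equation is tested with $\tilde\tau_h$. The subtraction then produces the extra cross-term
\[
(\se,\ \mathcal{M}_h(\tilde\tau_h)-\tilde\tau_h)_K,
\]
which yields an additional $h_K^{1/2}\|\se\|_{0,K}$ on the right-hand side of the per-face estimate and hence a harmless $Ch\|\se\|_{0,\Omega}$ after summation. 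Without routing Green's identity through $\mathcal{M}_h$, the identity you need simply does not hold, and your claimed two-term bound cannot be derived.
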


The proof of Theorem \ref{converg-os-thm} is presented  in Appendix \ref{app2}.

\subsection{Error analysis in $G$ convergence}

In subsection \ref{peroidc-case}, we have investigated the case when $\ke$ in  (\ref{target-elliptic}) is $\epsilon$ periodic.  However,
the expanded  mixed MsFEMs   can also  be applied to non-periodic separable scales.  In this section
we will discuss the convergence for the  case of separable scales described in   $G-$convergence (ref. \cite{jko94}).
$G-$convergence is more general than the periodic homogenization described in Subsection \ref{peroidc-case}.

A sequence of matrices $\ke$  is $G-$convergent to $k^*$
if for any open set $\omega\subset \Omega$ and any right-hand side  $f\in H^{-1}(\omega)$ in (\ref{target-elliptic}), if the sequence of the solutions
$\pe$ in (\ref{target-elliptic}) satisfies
\[
\pe \rightharpoonup p^* \ \  \text{weakly in} \ \ H^1(\omega) \ \ \text{as} \ \ \epsilon\rightarrow 0,
\]
where $p^*$ is the solution of the equation (\ref{homog-elliptic}), in which $k^*$ is the homogenized matrix in the sense of $G-$convergence.
The $G-$convergence implies that
\[
\ke \nabla \pe \rightharpoonup k^* \nabla p^* \ \ \text{weakly in} \ \ L^2(\omega) \ \ \text{as} \ \ \epsilon\rightarrow 0.
\]
There is no explicit formula for the matrix  $k^*$, which is defined as a limit in the distributional sense, i.e.,
\[
\ke\nabla \mathcal{N}^i_{\epsilon} \rightharpoonup k^* e_i \ \ \text{in} \ \ \mathcal{D}'(\omega; \mathbb{R}^d),
\]
where the auxiliary functions $\mathcal{N}^i_{\epsilon}$ ($i=1,\cdots, d$) satisfy
\[
\mathcal{N}^i_{\epsilon}\rightharpoonup x_i \ \  \text{weakly in} \ \ H^1(\omega) \ \ \text{as} \ \ \epsilon\rightarrow 0.
\]
The  auxiliary functions are not explicit.
We define the corrector matrix $\nabla \mathcal{N}_{\epsilon}=(\frac{\partial \mathcal{N}^i_{\epsilon}}{\partial x_j})_{i,j=1,\cdots, d}$.

The following lemma is about corrector in $G-$convergence.
\begin{lemma} \cite{mt97}
\label{corrector-est-G}
Let $\ke$ be a sequence $G-$converging to $k^*$ as $\epsilon\rightarrow 0$. Then
\[
\nabla \pe=\nabla \mathcal{N}_{\epsilon} \cdot \nabla p^* +R_{\epsilon}^{\omega},
\]
where  $R_{\epsilon}^{\omega}\rightarrow 0$ strongly in $L^1(\omega)$
as $\epsilon\rightarrow 0$.
Moreover, if $\nabla \mathcal{N}_{\epsilon}$ is bounded in $L^r(\omega)$ for some $r$ such that $2\leq  r \leq \infty$,
and $\nabla p^*\in L^s(\omega)$ for some $s$ such that $2\leq s< \infty$,  then
$R_{\epsilon}^{\omega}\rightarrow 0$ strongly in $L^t(\omega)$, as $\epsilon\rightarrow 0$, where
$t=\min\{2,\frac{rs}{r+s}\}$.
\end{lemma}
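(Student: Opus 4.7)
Given that this lemma is the classical Murat--Tartar corrector result, my plan is to reduce it to a weighted energy identity and then pass to the limit via the div--curl (compensated compactness) lemma. The decomposition in the statement is definitional: set $R_\epsilon^\omega := \nabla p_\epsilon - \nabla \mathcal{N}_\epsilon \cdot \nabla p^*$, so the whole task is to show $R_\epsilon^\omega$ vanishes in the stated norms.

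For the $L^1(\omega)$ convergence, I would first perform a density reduction: approximate $\nabla p^*\in L^2(\omega)$ by piecewise-constant vector fields $v_k$ satisfying $\|\nabla p^* - v_k\|_{L^2(\omega)}\to 0$. Uniform $H^1_{loc}$-boundedness of the correctors gives $\|\nabla \mathcal{N}_\epsilon\cdot(\nabla p^* - v_k)\|_{L^1(\omega)} \leq C\|\nabla p^* - v_k\|_{L^2(\omega)}$ uniformly in $\epsilon$, reducing matters to the case $\nabla p^*\equiv c$ constant on subregions. On such a region, cut off by $\phi\in C_c^\infty$ and expand the weighted energy
\[
\int_\omega \ke\bigl(\nabla p_\epsilon - \nabla \mathcal{N}_\epsilon c\bigr)\cdot\bigl(\nabla p_\epsilon - \nabla \mathcal{N}_\epsilon c\bigr)\,\phi\,dx.
\]
Each of the four terms admits a weak limit: the linear pieces use $\ke\nabla p_\epsilon \rightharpoonup k^*\nabla p^*$ and $\ke\nabla \mathcal{N}_\epsilon^i\rightharpoonup k^*e_i$ directly, while the quadratic cross-products pair a (controlled-divergence) flux with a curl-free gradient, so the div--curl lemma forces the product to converge to the product of the weak limits. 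The net limit collapses to $\int k^*(\nabla p^* - c)\cdot(\nabla p^* - c)\phi\,dx = O(\|\nabla p^* - v_k\|_{L^2}^2)$, and ellipticity of $\ke$ (uniform in $\epsilon$, since $G$-limits preserve the ellipticity class) converts this back to a bound on $\|R_\epsilon^\omega\|_{L^2(\omega')}$, giving $L^2$ and hence $L^1$ convergence after taking $k\to\infty$.

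The main obstacle in this plan is the passage to the limit in the quadratic quantity $\int \ke \nabla \mathcal{N}_\epsilon^i \cdot \nabla \mathcal{N}_\epsilon^j\,\phi\,dx$, since both factors converge only weakly and $\ke$ itself oscillates, so naive weak--weak products do not pass. The div--curl lemma is what saves the argument: $\ke\nabla \mathcal{N}_\epsilon^i$ is (essentially) divergence-free by construction of the auxiliary functions, and $\nabla \mathcal{N}_\epsilon^j$ is a pure gradient; verifying these hypotheses on the localized cut-off domain and checking that the multiplication by $\phi$ does not destroy the divergence control is where the real work lies. Everything else is bilinear bookkeeping and density.

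For the improved $L^t$ statement, a Hölder estimate gives $\nabla \mathcal{N}_\epsilon\cdot \nabla p^*$ bounded in $L^{rs/(r+s)}(\omega)$ uniformly in $\epsilon$, and $\nabla p_\epsilon$ is always in $L^2$, so $R_\epsilon^\omega$ is uniformly bounded in $L^t$ with $t=\min\{2,rs/(r+s)\}$. Combining this uniform $L^t$-bound with the already-established $L^1$ convergence (which yields a.e. convergence along a subsequence) and invoking Vitali's convergence theorem (uniform integrability up to scale $L^t$) upgrades the convergence to strong $L^t(\omega)$, as claimed.
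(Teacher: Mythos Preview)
The paper does not prove this lemma at all; it is quoted directly from Murat--Tartar \cite{mt97} and used as a black box in the proof of Theorem~\ref{converg-G-thm}. Your outline is essentially the classical compensated-compactness proof from that reference, and the $L^1$ part is correctly sketched: the density reduction to piecewise-constant targets, the cut-off energy expansion, and the identification of each cross term via the div--curl lemma are exactly the standard ingredients.

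There is, however, a genuine gap in your upgrade to $L^t$. A uniform bound on $\|R_\epsilon^\omega\|_{L^t}$ together with $L^1$ convergence does \emph{not} yield the uniform integrability of $|R_\epsilon^\omega|^t$ that Vitali's theorem requires; boundedness in $L^t$ only gives equi-integrability at exponents strictly below $t$. (A standard counterexample is $n^{1/t}\chi_{[0,1/n]}$, which is bounded in $L^t$, tends to zero in $L^1$, yet does not converge in $L^t$.) The correct route is to rerun the density argument in the $L^t$ topology: approximate $\nabla p^*$ in $L^s$ by step functions $v_k$, so that $\|\nabla\mathcal{N}_\epsilon\cdot(\nabla p^*-v_k)\|_{L^{rs/(r+s)}}\le C\|\nabla p^*-v_k\|_{L^s}$ uniformly in $\epsilon$ by H\"older and the $L^r$ bound on $\nabla\mathcal{N}_\epsilon$; then for each fixed $k$ the remaining piece $\nabla p_\epsilon-\nabla\mathcal{N}_\epsilon\cdot v_k$ converges strongly in $L^2_{\mathrm{loc}}$ (hence in $L^t$, since $t\le 2$) by the div--curl argument you already carried out. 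Taking $\limsup_{\epsilon\to 0}$ followed by $k\to\infty$ gives the claim without any appeal to Vitali.
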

We have the following convergence theorem for the case of  $G-$convergence.
\begin{theorem}
\label{converg-G-thm}
Let $\big((\se, \ue), \pe\big)$ be the solution of (\ref{exp-fem-cont}) and  $(\theta_h^l, u_h^l, p_h^l, \lambda_h^l )$ be the solution of (\ref{exp-fem-hybrid-local}).
If $\nabla \mathcal{N}_{\epsilon}\in L^{\infty} (K)$ for all $K$, then
\begin{eqnarray}
\label{error-fgp-G}
\begin{split}
 &\lim_{\epsilon \rightarrow 0}\big \{ \|\big((\se, \ue), \pe\big)-\big((\theta_h^l, u_h^l), p_h^l\big)\|_{X\times Q} + \|\mathcal{P}_{\partial} \pe -\lambda_h^l\|_{-{1\over 2}, h} \big\}\\
 &\leq Ch(|u^*|_{0,\Omega} +\|f\|_{1, \Omega}).
\end{split}
\end{eqnarray}
\end{theorem}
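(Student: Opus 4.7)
The plan is to mimic the structure of the proof of Theorem \ref{converg-per-thm}, replacing the periodic two-scale expansion (Lemma \ref{p-w-lem}) with the $G$-convergence corrector from Lemma \ref{corrector-est-G}. By Theorem \ref{equi-thm-local}, the hybrid and non-hybrid solutions coincide, so it suffices to bound the error in the non-hybrid solution by invoking the C\'ea-type estimate of Theorem \ref{inf-sup-thm}. I will then exhibit a good trial triple $\bigl((\xi_h,v_h),q_h\bigr)\in X_h^l\times Q_h$: namely $q_h=\mathcal{P}_h \pe$, $\xi_h|_K=\nabla \we$ (which lies in $X_{1,h}^l(K)$ by Lemma \ref{gradw-lem}), and $v_h=\mathcal{I}_h u^*$. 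The pressure bound $\|\pe-q_h\|_{0,\Omega}\le Ch|\pe|_{1,\Omega}$ and the divergence bound $\|\mathrm{div}(\ue-v_h)\|_{0,\Omega}\le Ch|f|_{1,\Omega}$ are identical to the periodic proof, since they do not invoke any periodic structure.

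The new ingredient is the estimate on $\|\se-\xi_h\|_{0,K}$ and $\|\ue-v_h\|_{0,K}$. By Lemma \ref{vh-eq}, $v_h|_K=-\ke\nabla\we$, whence $\ue-v_h|_K=-\ke(\nabla\pe-\nabla\we)$, so everything reduces to estimating $\nabla\pe-\nabla\we$ in $L^2(K)$. Apply Lemma \ref{corrector-est-G} once on $\Omega$ for $\pe$, and once on each $K$ for $\we$, to obtain
\begin{equation*}
\nabla\pe-\nabla\we=\nabla\mathcal{N}_\epsilon\,\bigl(\nabla p^*-\nabla \wh\bigr)+R_\epsilon^\Omega-R_\epsilon^K,
\end{equation*}
where $R_\epsilon^\Omega,R_\epsilon^K\to 0$ strongly in $L^2$ under the hypothesis $\nabla\mathcal{N}_\epsilon\in L^\infty$ (the exponent analysis in Lemma \ref{corrector-est-G} with $r=\infty$, $s=2$ gives $t=2$). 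The leading piece is controlled using the homogenized identities from Lemma \ref{vh-eq}, which give $\nabla p^*=-(k^*)^{-1}u^*$ and $\nabla \wh=-(k^*)^{-1}\mathcal{I}_h^* u^*|_K$, so $\nabla p^*-\nabla \wh=-(k^*)^{-1}\bigl(u^*-\mathcal{I}_h^* u^*|_K\bigr)$, which is $O(h)$ by the standard $RT_0$ interpolation estimate (combined with the uniform bound on $\|\nabla\mathcal{N}_\epsilon\|_{L^\infty(K)}$). Passing to $\liminf_{\epsilon\to 0}$ kills the remainders and yields the desired $O(h)$ bound on $\|\se-\xi_h\|_{0,\Omega}$ and $\|\ue-v_h\|_{0,\Omega}$.

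Combining these four pieces through the C\'ea inequality delivers $\lim_{\epsilon\to 0}\|\bigl((\se,\ue),\pe\bigr)-\bigl((\theta_h^l,u_h^l),p_h^l\bigr)\|_{X\times Q}\le Ch(|u^*|+\|f\|_{1,\Omega})$ in the appropriate norm for $u^*$. For the Lagrange multiplier piece $\|\mathcal{P}_\partial\pe-\lambda_h^l\|_{-1/2,h}$, I simply repeat the local duality argument used in the periodic case: test the second hybrid equation against the local function $\tilde\tau_h=|e|(\mathcal{P}_\partial\pe-\lambda_h^l)\psi_{\chi_e}^K$, apply Green's formula on the continuous side, use the basis-norm bound (\ref{basis-est}) and the auxiliary pressure estimate (Theorem 4.6 of \cite{ga99}), and finally invoke the $L^2$ error bounds just established to close.

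The main obstacle I anticipate is the uniform-in-$\epsilon$ control of the corrector remainders across all coarse cells simultaneously. Because the quantitative decay rate of $R_\epsilon$ in Lemma \ref{corrector-est-G} is qualitative (only convergence, not a rate), we cannot hope for an $\epsilon$-explicit bound and must take $\epsilon\to 0$ with $h$ fixed; this is precisely why the theorem is stated as a $\lim_{\epsilon\to 0}$. A secondary technicality is that Lemma \ref{corrector-est-G} is applied to the local problem (\ref{w-eq}), whose data $\mathcal{I}_h^* u^*|_K$ depends on $h$, so one must verify that the hypothesis $\nabla\mathcal{N}_\epsilon\in L^\infty$ is sufficient to ensure that the local remainder $R_\epsilon^K$ is bounded uniformly in $K$ as well, which is the reason the assumption is stated pointwise on each cell.
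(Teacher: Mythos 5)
Your proposal is correct and follows essentially the same route as the paper: the same trial triple $q_h=\mathcal{P}_h\pe$, $\xi_h|_K=\nabla \we$, $v_h=\mathcal{I}_h u^*$ fed into the C\'ea estimate of Theorem \ref{inf-sup-thm}, the same corrector decomposition $\nabla\pe-\nabla\we=\nabla\mathcal{N}_\epsilon(\nabla p^*-\nabla\wh)+R_\epsilon^\Omega-R_\epsilon^K$ with the middle term reduced to the $RT_0$ interpolation error of $u^*$ via Lemma \ref{vh-eq}, and the same reuse of the periodic-case Lagrange-multiplier argument. Your closing remark about needing to control the cell-wise remainders $R_\epsilon^K$ (finitely many for fixed $h$, hence harmless in the $\epsilon\to 0$ limit) is a point the paper passes over silently, and is worth having made explicit.
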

\begin{proof}
The proof of (\ref{error-fgp-G}) is similar to the proof of (\ref{error-fgp}).
We set $q_h=\mathcal{P}_h \pe$, $\xi_h|_K=\nabla \we$ and $v_h=\mathcal{I}_h u^*$ and then use Theorem \ref{inf-sup-thm}.
By the proof of (\ref{error-fgp}), it immediately follows
\begin{eqnarray}
\label{ineq-1-G}
\|div(\ue-v_h)\|_{0, \Omega} +\|\pe-q_h\|_{0, \Omega}\leq Ch(|\pe|_{1, \Omega}+|f|_{1,\Omega})\leq Ch \|f\|_{1, \Omega}.
\end{eqnarray}
By  Lemma \ref{vh-eq} and Lemma \ref{corrector-est-G}
\begin{eqnarray}
\begin{split}
&\|\nabla \pe-\nabla \we\|_{0,K}\\
&\leq \|\nabla \pe-\nabla \mathcal{N}_{\epsilon} \nabla p^*\|_{0,K} + \| \nabla \mathcal{N}_{\epsilon}(\nabla p^*-\nabla \wh)\|_{0,K}+\|\nabla \mathcal{N}_{\epsilon}\nabla \wh-\nabla \we\|_{0.K}\\
&\leq \|\nabla \pe-\nabla \mathcal{N}_{\epsilon} \nabla p^*\|_{0,K}+  \|\nabla \mathcal{N}_{\epsilon}\big( (k^*)^{-1}(u^*-\mathcal{I}_h^* u^*)\big )\|_{0, K} +\|R_{\epsilon}^K\|_{0, K}\\
&\leq \|\nabla \pe-\nabla \mathcal{N}_{\epsilon} \nabla p^*\|_{0,K} +C h |u^*|_{0,K} +\|R_{\epsilon}^K\|_{0, K},
\end{split}
\end{eqnarray}
which gives
\begin{eqnarray}
\label{ineq-2-G}
\|\nabla \pe-\nabla \xi_h\|_{0,\Omega}\leq C\|R_{\epsilon}^{\Omega}\|_{0, \Omega} +C h |u^*|_{0,\Omega}+C\sum_K \|R_{\epsilon}^K\|_{0, K}.
\end{eqnarray}
Similarly we have
\begin{eqnarray}
\label{ineq-3-G}
\| \ue-v_h\|_{0,\Omega}\leq C\|R_{\epsilon}^{\Omega}\|_{0, \Omega} +C h |u^*|_{0,\Omega}+C\sum_K \|R_{\epsilon}^K\|_{0, K}.
\end{eqnarray}
By combining (\ref{ineq-1-G}), (\ref{ineq-2-G}), (\ref{ineq-3-G}) and Lemma \ref{corrector-est-G} and letting $\epsilon \rightarrow 0$, we have
\begin{eqnarray}
\label{ineq-4-G}
 \lim_{\epsilon \rightarrow 0} \|\big((\se, \ue), \pe\big)-\big((\theta_h^l, u_h^l), p_h^l\big)\|_{X\times Q}
 \leq Ch(|u^*|_{0,\Omega} +\|f\|_{1, \Omega}).
\end{eqnarray}
By following the proof of Theorem \ref{converg-per-thm} and
using (\ref{ineq-4-G}), we can immediately obtain that
\begin{eqnarray}
\label{ineq-5-G}
\lim_{\epsilon \rightarrow 0}\|\mathcal{P}_{\partial} \pe -\lambda_h^l\|_{-{1\over 2}, h}
 \leq Ch(|u^*|_{0,\Omega} +\|f\|_{1, \Omega}).
 \end{eqnarray}
 The proof of (\ref{error-fgp-G}) is completed by combining (\ref{ineq-4-G}) and (\ref{ineq-5-G}).
\end{proof}

Because no explicit formula is available for homogenization matrix $k^*$,  we are not able to obtain an explicit convergence rate in $G-$convergence.
The approximation rate is only presented in terms of limit as $\epsilon \rightarrow 0$. We can also obtain the convergence of oversampling  expanded mixed MsFEM in terms
of $G-$convergence, and the convergence result is the same as in Theorem \ref{converg-G-thm}.

%%%%%%%%%%%%%%%%%%%%%%%%%%%%%%%%%%%%%%

\subsection{Error analysis for global expanded mixed MsFEM}

In this subsection, we  present the convergence results for the expanded mixed MsFEM using global information.
 We consider the continuum scales for the global expanded mixed MsFEM.

For analysis, we make the following assumption for global information.
\begin{assumption}
\label{assum-G}
There exist global fields $\{u_1, \cdots, u_N\}$ and continuous functions $A_i(x)$ ($i=1,\cdots N$)  such that
\[
u=\sum_{i=1}^N A_i(x) u_i, \quad  A_i(x)\in C^{\alpha}.
\]
\end{assumption}

The existence of these global fields has been discussed in the literature recently~(see e.g., \cite{aej08, bo09, jem10, oz07}).
In particular, Owhadi and Zhang \cite{oz07} develop global fields $(u_1, \dots, u_d)$ that are solutions of the
following equations
\begin{eqnarray}
\label{harmonic-eq}
\begin{cases}
\begin{split}
u_i+k\theta_i &=0 \ \   \text{in} \ \ \Omega\\
\theta_i-\nabla p_i &=0  \ \   \text{in} \ \ \Omega\\
div (u_i) &= 0 \ \ \text{in} \ \ \Omega \\
p_i&= x_i \ \ \text{on} \ \ \partial \Omega \, ,
\end{split}
\end{cases}
\end{eqnarray}
where $x=(x_1,\dots, x_d)$.  Using these global fields, and Theorem 1.3
in \cite{oz07}, we can prove the following proposition.
\begin{proposition}
\label{global-fields-prop}
Let $(\theta, u)$ be the solution  of (\ref{1st-order-target}) and
$(\theta_i, u_i, p_i)$ solve equation (\ref{harmonic-eq})
for $i=1, \dots, d$.
Let $\mathbb{P}=(p_1, \dots, p_d)$. If $(\nabla \mathbb{P})^T k \nabla\mathbb{P}$ satisfies the anisotropic condition
described in Theorem 1.3 in  \cite{oz07}, then
\[
u=\sum_{i=1}^d A_i(x)u_i, \quad \theta=\sum_{i=1}^d A_i(x) \theta_i,
\]
where $A_i(x)$ are continuous functions.  Moreover, for any $s>d$, $i=1, \dots, d$,
\[
\|A_i\|_{C^{1-{d\over s}}(\Omega)}^2\leq C \|f\|_{0,s,\Omega}^2,
\]
where the constant $C$ depends on the anisotropic condition and $s$.
\end{proposition}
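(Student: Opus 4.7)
The plan is to express the pressure $p$ solving~(\ref{target-elliptic}) as a smooth function of the $k$-harmonic coordinates $\mathbb{P}=(p_1,\dots,p_d)$ and then read off the coefficients $A_i$ via the chain rule. The existence and regularity of that smooth function will come from Theorem~1.3 of~\cite{oz07}, and everything else will be a routine composition argument.

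First I would use the fact that each component $p_i$ is $k$-harmonic on $\Omega$ with identity boundary data $p_i|_{\partial\Omega}=x_i$, so $\mathbb{P}\colon\Omega\to\mathbb{R}^d$ is precisely the coordinate map studied in Owhadi--Zhang. Under the stated anisotropic hypothesis on $(\nabla \mathbb{P})^T k \nabla \mathbb{P}$, Theorem~1.3 of~\cite{oz07} yields a function $F\in C^{1,1-d/s}(\mathbb{P}(\Omega))$ such that $p=F\circ \mathbb{P}$ and
\[
\|F\|_{C^{1,1-d/s}(\mathbb{P}(\Omega))}\leq C\|f\|_{0,s,\Omega},
\]
with $C$ depending only on $s$, $\Omega$ and the anisotropic constant (not on $f$).

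Next I would differentiate this identity. Since $\theta_i=\nabla p_i$, the chain rule gives
\[
\theta=\nabla p=\sum_{i=1}^d (\partial_i F)(\mathbb{P}(x))\,\nabla p_i=\sum_{i=1}^d A_i(x)\,\theta_i,
\]
where I set $A_i(x):=(\partial_i F)(\mathbb{P}(x))$. Multiplying by $-k$ and invoking $u=-k\theta$ together with $u_i=-k\theta_i$ immediately gives the matching identity $u=\sum_i A_i u_i$. Each $A_i$ is the composition of the H\"older continuous function $\partial_i F$ with the Lipschitz map $\mathbb{P}$, and the Lipschitz constant of $\mathbb{P}$ depends only on $k$ and $\Omega$. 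A standard H\"older composition estimate therefore delivers
\[
\|A_i\|_{C^{1-d/s}(\Omega)}\leq C\,\|\partial_i F\|_{C^{0,1-d/s}(\mathbb{P}(\Omega))}\leq C\|f\|_{0,s,\Omega},
\]
and squaring produces the claimed bound.

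The main obstacle is the precise translation to the setting of Theorem~1.3 of~\cite{oz07}: one has to check that the hypothesis stated here on $(\nabla\mathbb{P})^T k\nabla\mathbb{P}$ matches the anisotropic condition of that theorem, that the boundary data $p_i=x_i$ produce exactly the harmonic coordinates covered there, and that the right-hand-side dependence comes out as the $L^s$ norm of $f$ rather than some other functional of the data. Once this reduction is secured, the decomposition of both $u$ and $\theta$ is forced simultaneously by the chain rule and the constitutive relation $u=-k\theta$, so no independent argument is needed to align the two expansions.
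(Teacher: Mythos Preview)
Your proposal is correct and matches the paper's approach: the paper does not give a detailed proof but simply states that the proposition follows from the global fields defined in (\ref{harmonic-eq}) together with Theorem~1.3 of \cite{oz07}. Your sketch---writing $p=F\circ\mathbb{P}$ via Owhadi--Zhang, applying the chain rule to get $A_i=(\partial_i F)\circ\mathbb{P}$, and then multiplying by $-k$ to pass from $\theta$ to $u$---is exactly the argument the paper has in mind.
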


\begin{remark}
  To simplify computation of global fields and the coupled
  coarse-scale system by expanded mixed MsFEMs, we may use a single
  global field ($N=1$) to construct the basis functions. This simplification
  has been shown to be effective for relevant two-phase flow simulations
  \cite{aej08, jem10}.
\end{remark}

Let $\mathcal{I}_h^g u$ be the interpolation of $u$, with its restriction on $K$ defined by
\[
\mathcal{I}_h^g u|_K=\sum_{i=1}^N \sum_{e\subset \pK}(\int_{e}A_i u_i \cdot n_e ds) \psi_{i, \chi_e}^K,
\]
where the velocity basis function $\psi_{i, \chi_e}^K$ is defined in  (\ref{global-basis}).
Then it follows that
\[
(div(u-\mathcal{I}_h^g u), q_h)=0  \quad \text{for} \quad \forall q_h\in Q_h.
\]
By Theorem 3.4 in \cite{aej08}, there exists $0<\mu\leq 1$, which depends on $\alpha$ and $d$, such that
\begin{equation}
\label{interp-global}
\|u-\mathcal{I}_h^g u\|_{0, \Omega}\leq Ch^{\mu}(\sum_{i} \|A_i\|_{C^{\alpha}(\Omega)}).
\end{equation}

Let $w(x)$  be the solution (up to a constant)  of the following equation
\begin{eqnarray}
\label{w-eq-global}
\begin{split}
\begin{cases}
-div (k\nabla w) &=div (\mathcal{I}_h^g u|_K) \ \  \text{in} \ \ K\\
-k\nabla w \cdot n_{\pK} &=(\mathcal{I}_h^g u|_K)\cdot n_{\pK} \ \ \text{on} \ \ \partial K.
\end{cases}
\end{split}
\end{eqnarray}
Then we have the following lemma.
\begin{lemma}
\label{w-lem-g}
Let $w$ be solution of (\ref{w-eq-global}). Then
\[
-k(x)\nabla w=\mathcal{I}_h^g u|_K \quad \text{on $K$}.
\]
Moreover, $\nabla w\in X_{1,h}^g(K)$.
\end{lemma}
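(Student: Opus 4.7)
The plan is to construct an explicit candidate $w^*$ for the solution of (\ref{w-eq-global}), built as a linear combination of the multiscale scalar basis functions $\phi_{i,\chi_e}^K$ appearing in (\ref{global-basis}), and then to use the uniqueness (up to a constant) of the Neumann problem to identify $\nabla w = \nabla w^*$. This mirrors exactly the argument used for Lemma \ref{vh-eq} and Lemma \ref{gradw-lem} in the periodic case; only the basis has been replaced by the global-information basis.

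First, I would expand the interpolant using its definition:
\[
\mathcal{I}_h^g u|_K \;=\; \sum_{i=1}^N \sum_{e\subset \partial K} c_{i,e}\, \psi_{i,\chi_e}^K, \qquad c_{i,e}:=\int_e A_i u_i\cdot n_e\,ds,
\]
and set $w^* := \sum_i\sum_{e\subset \partial K} c_{i,e}\, \phi_{i,\chi_e}^K$. By the first and second lines of the basis system (\ref{global-basis}), each pair satisfies $\psi_{i,\chi_e}^K = -k\,\eta_{i,\chi_e}^K = -k\,\nabla \phi_{i,\chi_e}^K$, so by linearity
\[
-k\nabla w^* \;=\; \sum_{i,e} c_{i,e}\,\psi_{i,\chi_e}^K \;=\; \mathcal{I}_h^g u|_K.
\]

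Next, taking the divergence of both sides and using the third line of (\ref{global-basis}) together with the corresponding boundary relation $\psi_{i,\chi_e}^K\cdot n_{\partial K} = b_i^{\partial K}$, I would verify that $w^*$ satisfies the same Neumann problem as $w$, namely $-\mathrm{div}(k\nabla w^*) = \mathrm{div}(\mathcal{I}_h^g u|_K)$ in $K$ with $-k\nabla w^*\cdot n_{\partial K} = (\mathcal{I}_h^g u|_K)\cdot n_{\partial K}$ on $\partial K$. The compatibility condition is automatic since both sides have the same prescribed flux. Uniqueness of the Neumann problem up to an additive constant then yields $\nabla w = \nabla w^*$, and hence $-k\nabla w = \mathcal{I}_h^g u|_K$.

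For the membership $\nabla w \in X_{1,h}^g(K)$, I would simply note that
\[
\nabla w \;=\; \nabla w^* \;=\; \sum_{i=1}^N \sum_{e\subset \partial K} c_{i,e}\,\eta_{i,\chi_e}^K,
\]
using the second equation in (\ref{global-basis}). Since by definition $X_{1,h}^g(K) = \mathrm{span}\{\eta_{i,\chi_e}^K\}$, the result follows.

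There is really no substantive obstacle here: the lemma is essentially a direct bookkeeping consequence of the construction of the global multiscale basis together with the elementary uniqueness theory for Neumann problems. The only subtlety worth flagging is that $w$ is defined only up to an additive constant, so identifications must be made at the level of gradients, not of the scalar potentials themselves; the statement of the lemma is written in precisely this form, so this is not an issue.
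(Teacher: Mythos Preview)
Your proposal is correct and follows essentially the same approach as the paper's proof: build the explicit linear combination $\sum_{i,e}(\int_e A_i u_i\cdot n_e\,ds)\,\phi_{i,\chi_e}^K$, observe via the basis equations (\ref{global-basis}) that its associated flux satisfies the same Neumann problem (\ref{w-eq-global}), and invoke uniqueness (up to a constant) to identify the gradients and read off the membership $\nabla w\in X_{1,h}^g(K)$. Your remark about working at the level of gradients rather than scalar potentials is apt and makes the uniqueness step slightly cleaner than in the paper's terse presentation.
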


The proof of Lemma \ref{w-lem-g} is given  in Appendix \ref{app3}.
By using Lemma \ref{w-lem-g}, the interpolation estimate (\ref{interp-global}) and  the techniques in
the proof of Theorem \ref{converg-per-thm}, we can derive the convergence result for the global expanded mixed MsFEM.
\begin{theorem}
\label{converg-global-thm}
Let $\big((\se, \ue), \pe\big)$ be the solution of (\ref{exp-fem-cont}) and  $(\theta_h^g, u_h^g, p_h^g, \lambda_h^g)$ be the solution of (\ref{exp-fem-hybrid-global}).
Under Assumption \ref{assum-G},  there exists $\mu$ ($0<\mu\leq 1$) such that
\begin{eqnarray}
\|\big((\theta, u), p\big)-\big((\theta_h^g, u_h^g), p_h^g\big)\|_{X\times Q} + \|\mathcal{P}_{\partial} p -\lambda_h^g\|_{-{1\over 2}, h}  &\leq & Ch^{\mu} \label{error-fgp-global},
\end{eqnarray}
where $C=C(\|f\|_{1, \Omega})$.
\end{theorem}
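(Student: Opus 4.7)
The plan is to mirror the strategy of Theorem~\ref{converg-per-thm}, using the C\'ea-type estimate for the global expanded mixed MsFEM together with Theorem~\ref{equi-thm-global} that identifies the hybrid solution with the non-hybrid one. First I would invoke Theorem~\ref{equi-thm-global} to identify $\big((\bar{\theta}_h^g,\bar{u}_h^g),\bar{p}_h^g\big)$ with $\big((\theta_h^g,u_h^g),p_h^g\big)$, so that the error in the $X\times Q$ norm can be controlled via the C\'ea-type inequality available (under the conditions of \cite{aej08}) for the global formulation, namely
\begin{equation*}
\|((\theta,u),p)-((\theta_h^g,u_h^g),p_h^g)\|_{X\times Q} \le C \inf_{((\xi_h,v_h),q_h)\in X_h^g\times Q_h}\|((\theta,u),p)-((\xi_h,v_h),q_h)\|_{X\times Q}.
\end{equation*}

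Next I would construct explicit competitors in the three spaces. I would take $q_h=\mathcal{P}_h p$ (giving the standard $\|p-q_h\|_{0,\Omega}\le Ch|p|_{1,\Omega}$), and $v_h=\mathcal{I}_h^g u$, the global multiscale velocity interpolant defined in the excerpt. The commutativity of divergence with this interpolant against piecewise constants plus the fact that $\mathrm{div}\,u=f$ yields $\|\mathrm{div}(u-v_h)\|_{0,\Omega}\le Ch|f|_{1,\Omega}$, while the approximation estimate (\ref{interp-global}) gives $\|u-v_h\|_{0,\Omega}\le Ch^\mu$. For the gradient variable I would choose $\xi_h|_K=\nabla w$, where $w$ solves (\ref{w-eq-global}). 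By Lemma~\ref{w-lem-g} this lies in $X_{1,h}^g(K)$ and satisfies $-k\nabla w=\mathcal{I}_h^g u|_K$. Since $\theta=-k^{-1}u$ pointwise, the uniform bound on $k^{-1}$ from Assumption~(\ref{k-assum}) reduces $\|\theta-\xi_h\|_{0,\Omega}$ to $\|u-\mathcal{I}_h^g u\|_{0,\Omega}$, again of order $h^\mu$. Combining these four pieces gives the desired $h^\mu$ bound on $\|((\theta,u),p)-((\theta_h^g,u_h^g),p_h^g)\|_{X\times Q}$.

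For the multiplier estimate, I would repeat the duality argument used in (\ref{mult-key}): pick $\tilde\tau_h=|e|(\mathcal{P}_\partial p-\lambda_h^g)\psi_{i,\chi_e}^K$ extended by zero outside $K$, apply the second equation of (\ref{exp-fem-hybrid-global}) together with Green's formula for the continuous problem, and bound $\|\tilde\tau_h\|_{0,K}+h_K\|\mathrm{div}\,\tilde\tau_h\|_{0,K}$ by $Ch_K^{1/2}\|\mathcal{P}_\partial p-\lambda_h^g\|_{0,e}$. This reduces the multiplier error to the already-controlled $X\times Q$ error plus $\|p-p_h^g\|_{0,\Omega}$, which in turn is bounded via the analog of (\ref{cea-est-p}) (Theorem~4.6 of \cite{ga99}). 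Putting the estimates together produces the unified bound $Ch^\mu$ with $C=C(\|f\|_{1,\Omega})$, where the dependence on $f$ enters through Proposition~\ref{global-fields-prop} controlling $\sum_i\|A_i\|_{C^\alpha}$.

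The main obstacle I foresee is establishing the analogue of (\ref{basis-est}) for the global basis $\psi_{i,\chi_e}^K$, which is required to justify the test function $\tilde\tau_h=|e|(\mathcal{P}_\partial p-\lambda_h^g)\psi_{i,\chi_e}^K$ in the multiplier duality step. Unlike the local basis, the boundary data $b_i^{\pK}$ is a rescaling of $u_i\cdot n_{\pK}$, so the trace argument of Lemma~\ref{basis-est} must be adapted to incorporate the normalization $\int_{\pK} u_i\cdot n_{\pK}$; under Assumption~\ref{assum-G} this normalization is bounded away from zero on each face (so that the rescaled Neumann data has the correct magnitude), and then a scaling/trace-inequality argument entirely parallel to Lemma~\ref{basis-est} delivers $\|\psi_{i,\chi_e}^K\|_{0,K}\le C|e|^{-1/2}h_K^{1/2}$. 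Once this local basis estimate is in hand, the remainder of the proof is a routine transcription of the periodic-case proof with (\ref{interp-global}) replacing the periodic correctors.
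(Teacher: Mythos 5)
Your proposal is correct and follows essentially the same route the paper takes: the paper only sketches this proof, stating that it follows from Lemma \ref{w-lem-g}, the interpolation estimate (\ref{interp-global}), and the techniques of Theorem \ref{converg-per-thm}, which is exactly the combination you spell out (C\'ea estimate via Theorem \ref{equi-thm-global}, competitors $q_h=\mathcal{P}_h p$, $v_h=\mathcal{I}_h^g u$, $\xi_h|_K=\nabla w$, and the Arnold--Brezzi duality argument for the multiplier). The one issue you flag --- the need for an analogue of (\ref{basis-est}) for the global basis $\psi_{i,\chi_e}^K$, which hinges on a non-degeneracy condition for $\int_e u_i\cdot n\,ds$ --- is a genuine ingredient that the paper leaves implicit under the ``suitable conditions described in \cite{aej08},'' so identifying it is a point in your favor rather than a gap in your argument.
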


Theorem \ref{converg-global-thm} shows  that the convergence of global expanded  mixed MsFEM is independent of
small scales, and  the resonance error is removed using the global information.

%%%%%%%%%%%%%%
%%%%%%%%%%%%%

\section{Numerical results}

In this section, we present numerical results that highlight the
advantages of the proposed expanded mixed MsFEMs.  In particular, we
show the improvement in the accuracy of the gradient unknown $\theta$
and the velocity unknown $u$ obtained with this new family of methods.
In addition, we confirm that if the media has strong non-local
features (e.g., channels and fractures), then the use of global
information in the construction of the basis functions significantly
improves the accuracy of the multiscale solution

In all of the numerical experiments reported below, we use $RT_0$ in
three dimensions, both as the fine-scale velocity space and as
fine-scale shape functions for the space approximating $\nabla p$ in
$L^2(\Omega)^3$ (not conforming in $H(div, \Omega)$) . The domain
$\Omega$ is discretized by a uniform hexahedral fine mesh of size $h$
and a uniform hexahedral coarse mesh of size $H>h$.  The fine mesh
is nested in the coarse mesh. The reference solutions for pressure,
velocity, and $\nabla p$ are computed by solving the fine-scale
expanded MFEM.

Our primary focus in this section is on the behavior of the error of
the MsFEM solutions for the velocity and pressure gradient.  These
errors are significantly affected by features of the multiscale models
and by the different boundary conditions that are used in the
construction of the multiscale basis functions.  The velocity is
particularly important and required for multi-phase simulations.  All
errors reported in this section are measured in $L^2$.  We are also
interested in the practical application of simulating multi-phase
flow. Therefore, in addition to velocity error data, we present
solutions obtained by the IMPES (implicit pressure, explicit
saturation) method for the coupled saturation and pressure equations
of two-phase flow (incompressible and immiscible).

\subsection{Low permeability channel}
\label{sec:ex-channel}

In the first experiment, the domain $\Omega$ is the unit cube.  We
take $h=1/24$ and $H=1/8$, so that each coarse element contains
$3\times 3\times 3$ fine elements.  The source function is as follows:
\begin{equation}\label{source}
f(x,y,z) = \left\{ \begin{array}{rl} 1 & \text{if }(x,y,z)\text{ is in } (0,H)^3, \\
-1 &  \text{if }(x,y,z)\text{ is in } (1-H,1)^3, \\
0 & \text{otherwise.}
\end{array} \right.
\end{equation}
This source function represents injection and production wells in
opposite corners of the domain, occupying coarse elements. It is a
practical example that demonstrates corner to corner flow across the
domain, allowing us to test different numerical methods in simulating
flow for various permeability fields.

In the case that the permeability is positive and bounded well above
zero everywhere, the expanded
mixed MsFEM gives comparable (almost identical) results to the standard
mixed MsFEM. However, if the permeability is not positive
everywhere, the standard mixed MsFEM is not applicable. Moreover, if the
permeability is not bounded well above zero everywhere, the finite
precision may significantly degrade the its performance.
Table \ref{lowpermchannel} demonstrates this fact, where a test is reported
using a channel of low permeability. This model problem does not have well separated
scales.  The channel has one fine cell's width in the $y-$ and $z-$directions,
and it extends across the entire domain in the $x-$direction. More
specifically, the channel occupies $(0,1)\times (\frac{4}{24},
\frac{5}{24})\times (\frac{7}{24}, \frac{8}{24})$.  Inside the
channel, $k=10^{-4}$, and elsewhere $k=1$.

The label ``MsFEM'' in Table \ref{lowpermchannel} represents the
standard mixed MsFEM, with variables for pressure and velocity
only. This method makes no direct approximation of the pressure
gradient, but we approximate $\nabla p$ with velocity divided by
permeability, when the permeability is positive everywhere. ``Expanded
MsFEM'' is the expanded mixed MsFEM, which approximates the pressure
gradient $\nabla p$ locally by non-conforming Raviart-Thomas
elements. ``GMsFEM'' means that global information, specifically the
fine-scale velocity obtained from the expanded mixed FEM, was used in
constructing the coarse multiscale basis functions. We observe that
using global information reduces the error by more than a factor of 20
in both the velocity and the pressure gradient, while the difference
between standard MsFEM and expanded MsFEM is negligible.
% in agreement with the theoretical equivalence of the two methods.
In the remainder
of this section, we do not report results for standard MsFEM.

\begin{table}
\caption{Low permeability channel}
\label{lowpermchannel}
\begin{center}
\begin{tabular}{|c|c|c|c|c|}
\hline
& MsFEM & GMsFEM & Expanded MsFEM & Expanded GMsFEM \\
\hline \hline
Velocity error & $1.173\times 10^{-3}$ & $5.005\times 10^{-5}$ & $1.173\times 10^{-3}$ & $5.005\times 10^{-5}$  \\ \hline
Velocity norm & $5.649\times 10^{-3}$ & $5.649\times 10^{-3}$ & $5.649\times 10^{-3}$ & $5.649 \times 10^{-3}$\\ \hline
$\nabla p$ error & $1.739\times 10^{-3}$ & $5.284\times 10^{-5}$ & $1.858\times 10^{-3}$  & $5.7812\times 10^{-5}$  \\ \hline
$\nabla p$ norm & $5.670\times 10^{-3}$ & $5.670\times 10^{-3}$ & $5.670\times 10^{-3}$ & $5.670\times 10^{-3}$ \\ \hline
\end{tabular}
\end{center}
\end{table}

\begin{table}
\caption{Channel with vanishing permeability}
\label{channelvanish}
\begin{center}
\begin{tabular}{|c|c|c|}
\hline
& Expanded MsFEM & Expanded GMsFEM \\
\hline \hline
Velocity error & $1.140\times 10^{-3}$ & $1.122\times 10^{-4}$ \\ \hline
Velocity norm & $5.642\times 10^{-3}$ & $5.642\times 10^{-3}$ \\ \hline
$\nabla p$ error & $1.461\times 10^{-3}$ & $9.338\times 10^{-4}$ \\ \hline
$\nabla p$ norm & $5.782\times 10^{-3}$ & $5.782\times 10^{-3}$ \\ \hline
\end{tabular}
\end{center}
\end{table}

\begin{figure}
  \begin{center}
    \includegraphics[height=50mm]{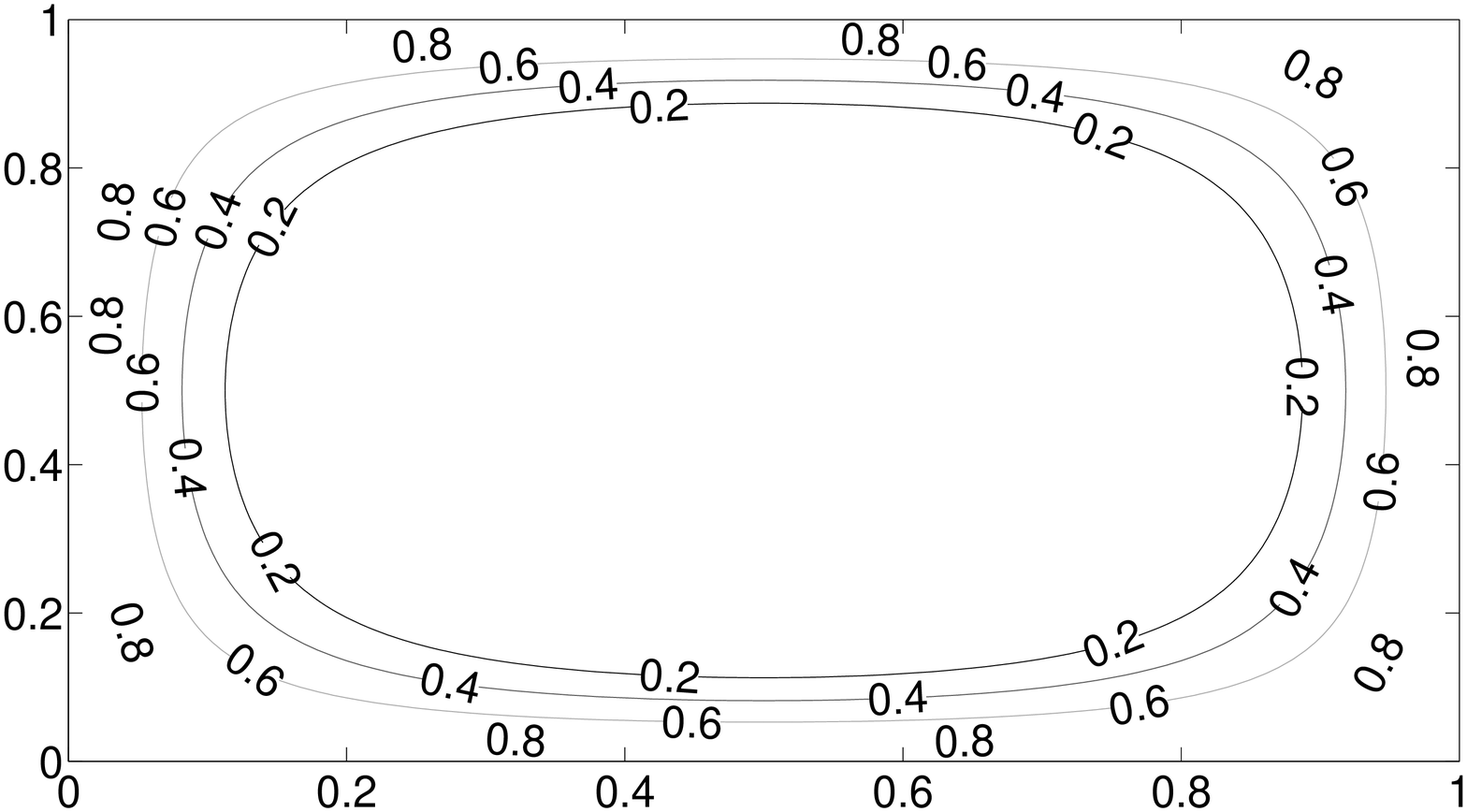}
  \end{center}
  \caption{Partially vanishing permeability within a reference fine cell}
  \label{vanishingpermcontour}
\end{figure}

Next, we test the more interesting case of a permeability that
vanishes in the channel. Standard mixed MsFEMs cannot be used in this
case. In each cell within the channel, we define the
permeability to be the vanishing, non-negative function
\begin{equation}
  \label{vanishingperm}
  k=\max (0,1-32x(1-x)y(1-y)),\qquad \text{for }(x,y)\in (0,1)^2,
\end{equation}
defined on the reference unit square (see
Figure~\ref{vanishingpermcontour} for the contour plot).  Outside the
channel, $k=1$.  This permeability has no scale separation, and it
ensures a positive definite mass matrix (weighted by $k$) for the
discontinuous space approximating $\nabla p$ in $L^2(\Omega)^3$. Note
that expanded mixed MsFEMs allow the permeability to vanish locally
within fine cells, unlike standard mixed MsFEMs which involve
$k^{-1}$, but the mass matrix weighted by $k$ must be invertible. The
results for the expanded methods are shown in Table
\ref{channelvanish}. Global information reduces the relative velocity error
from approximately $20\%$ to $2\%$ and the relative pressure gradient error
from approximately $25\%$ to $16\%$.

%%%%%%%%%%%%%%%%%%

\subsection{Oscillatory permeability}
\label{sec:ex-oscillatory}

The next experiment demonstrates the resonance error that appears
in the velocity and the pressure gradient that are obtained using the
local expanded MsFEM.  The permeability is taken to be the oscillatory function
\begin{equation}
  k = (\sin(20\pi x)+1.5)(\sin(20\pi y)+1.5)
  \label{eq:oscillatory-perm}
\end{equation}
on the domain $\Omega=(0,1)\times (0,1)\times (0,\frac{1}{8})$, and
the mesh has $100\times 100\times 8$ fine cells.  The source function
is the two-dimensional analog of \eqref{source} (constant in the
$z$-direction), namely
\begin{equation}\label{source2d}
f(x,y,z) = \left\{ \begin{array}{rl} 1 & \text{if }(x,y)\text{ is in } (0,H)^2, \\
-1 &  \text{if }(x,y)\text{ is in } (1-H,1)^2, \\
0 & \text{otherwise.}
\end{array} \right.
\end{equation}
Thus the problem is essentially two-dimensional, although we solve in
a three-dimensional domain.  The solution will be approximately
constant in the $z$-direction, so we simply plot an $x$-$y$ slice.
We tested both local and global expanded mixed MsFEMs with
coarse meshes of size $20\times20 \times 1$ (each coarse cell having
$5\times5\times 8$ fine cells) and $10\times 10\times 1$ (each coarse
cell having $10\times10\times 8$ fine cells). In the $20\times
20\times 1$ case, the permeability has one period per coarse edge
length.  Table \ref{oscillatory100} lists the error for both methods
on the two choices of coarse grids.

\begin{table}
\caption{Oscillatory permeability without scale separation, $100\times 100\times 8$ fine mesh}
\label{oscillatory100}

\begin{center}
\begin{tabular}{|c|c|c|c|c|}
\hline
& \multicolumn{2}{|c|}{$20\times 20\times 1$ coarse mesh} & \multicolumn{2}{|c|}{$10\times 10\times 1$ coarse mesh} \\
\hline
& Exp. MsFEM & Exp. GMsFEM & Exp. MsFEM & Exp. GMsFEM \\
\hline \hline
Velocity error & $2.066\times 10^{-3}$ & $1.050\times 10^{-4}$ & $1.827\times 10^{-3}$ & $7.606\times 10^{-4}$ \\ \hline
Velocity norm & $5.143\times 10^{-3}$ & $5.143\times 10^{-3}$ & $5.143\times 10^{-3}$ & $5.143\times 10^{-3}$ \\ \hline
$\nabla p$ error & $1.370\times 10^{-3}$ & $1.452\times 10^{-4}$ & $1.552\times 10^{-3}$ & $5.360\times 10^{-4}$ \\ \hline
$\nabla p$ norm & $3.254\times 10^{-3}$ & $3.254\times 10^{-3}$ & $3.254\times 10^{-3}$ & $3.254\times 10^{-3}$ \\ \hline
\end{tabular}
\end{center}
\end{table}

The local expanded mixed MsFEM clearly suffers from the resonance errors in
the velocity and the pressure gradient.  In this example, refinement
of the coarse mesh does not change the error significantly because
the oscillations in $k$ are not resolved, even by the $20\times 20 \times 1$ coarse
mesh. In fact, the error of the velocity actually increases slightly when
the coarse grid is refined.
Figure \ref{lvx10and20} shows the $x$-component of the reference
velocity on the fine grid (top left), and the excellent agreement with
the $x$-component of the velocity obtained with the global expanded
mixed MsFEM on the $20\times 20 \times 1$ coarse grid (top right).  In
contrast, the impact of the resonance errors on the $x$-component of the
velocity obtained with the local expanded mixed MsFEM is evident
in Figure \ref{lvx10and20} for both the $10\times 10\times 1$
(bottom-left) and $20\times 20\times 1$ (bottom-right) coarse grids.
Note that these plots are in an $x$-$y$ slice, as the solutions
are constant in the $z$-direction.  On the other hand, the global
expanded mixed MsFEM performs well in this example, and the refinement of the coarse mesh
reduces the error by more than half.

\begin{figure}
  \centering
  \includegraphics[height=2in]{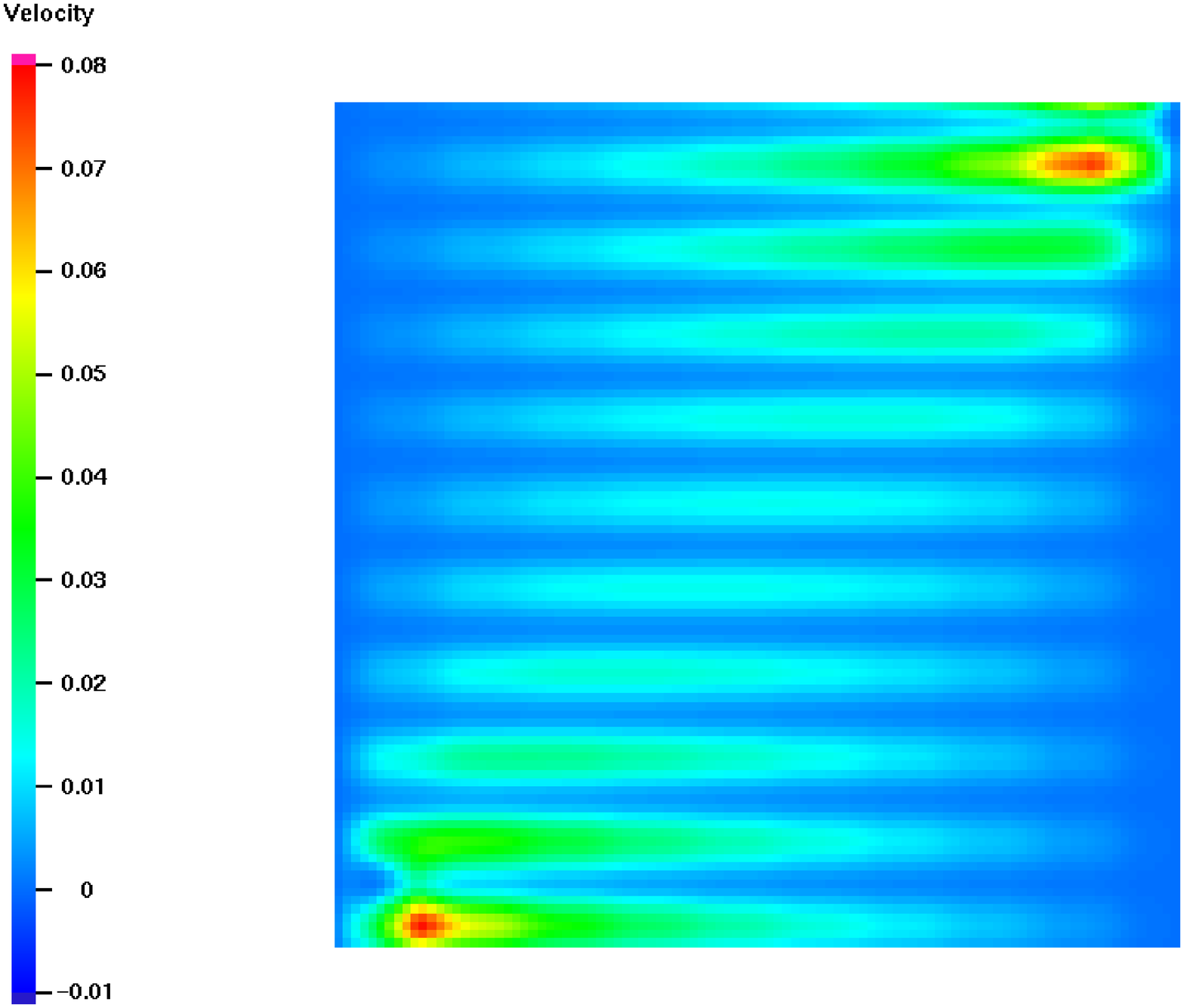}  \hspace*{12pt}
  \includegraphics[height=2in]{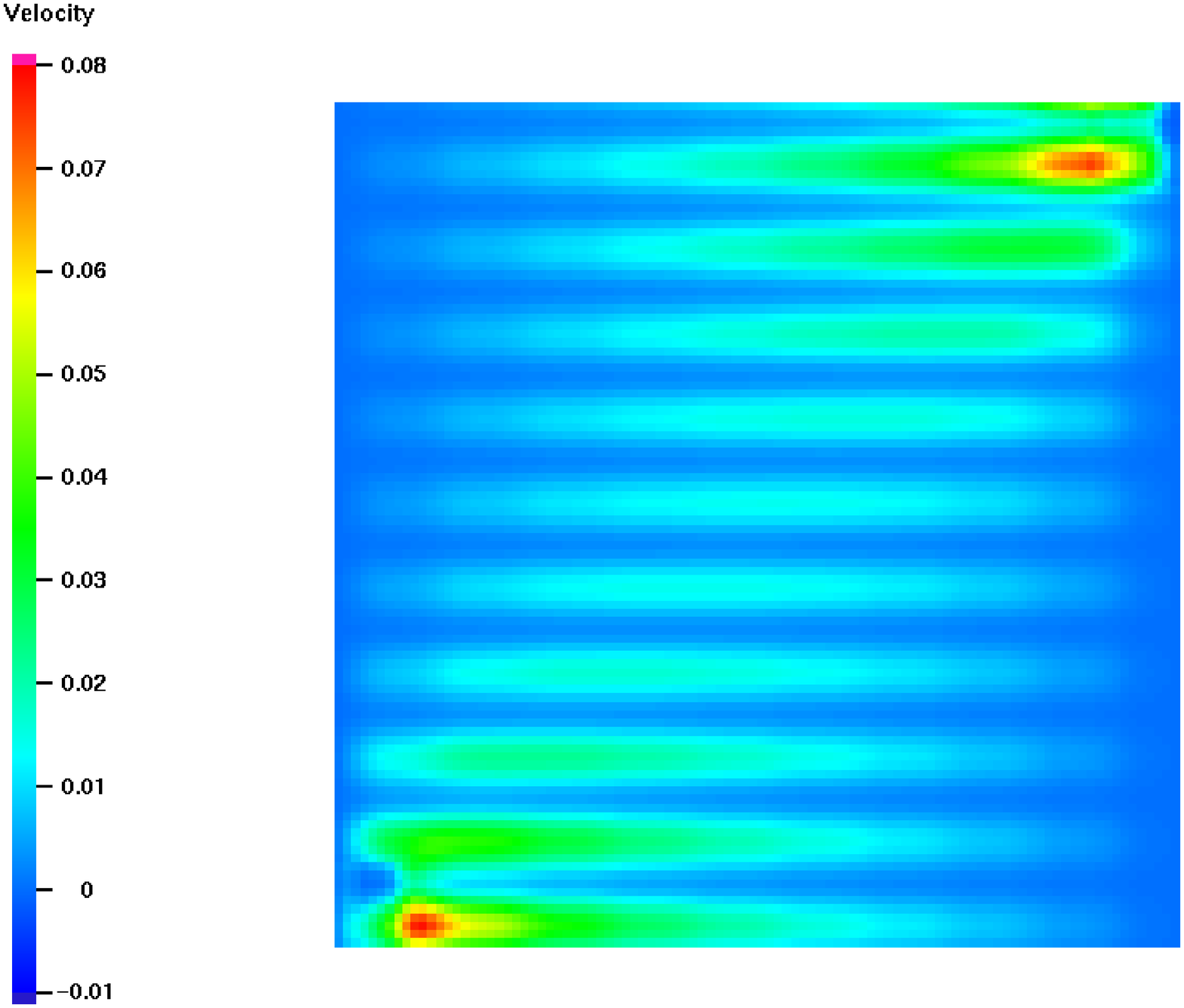} \\[12pt]
  \includegraphics[height=2in]{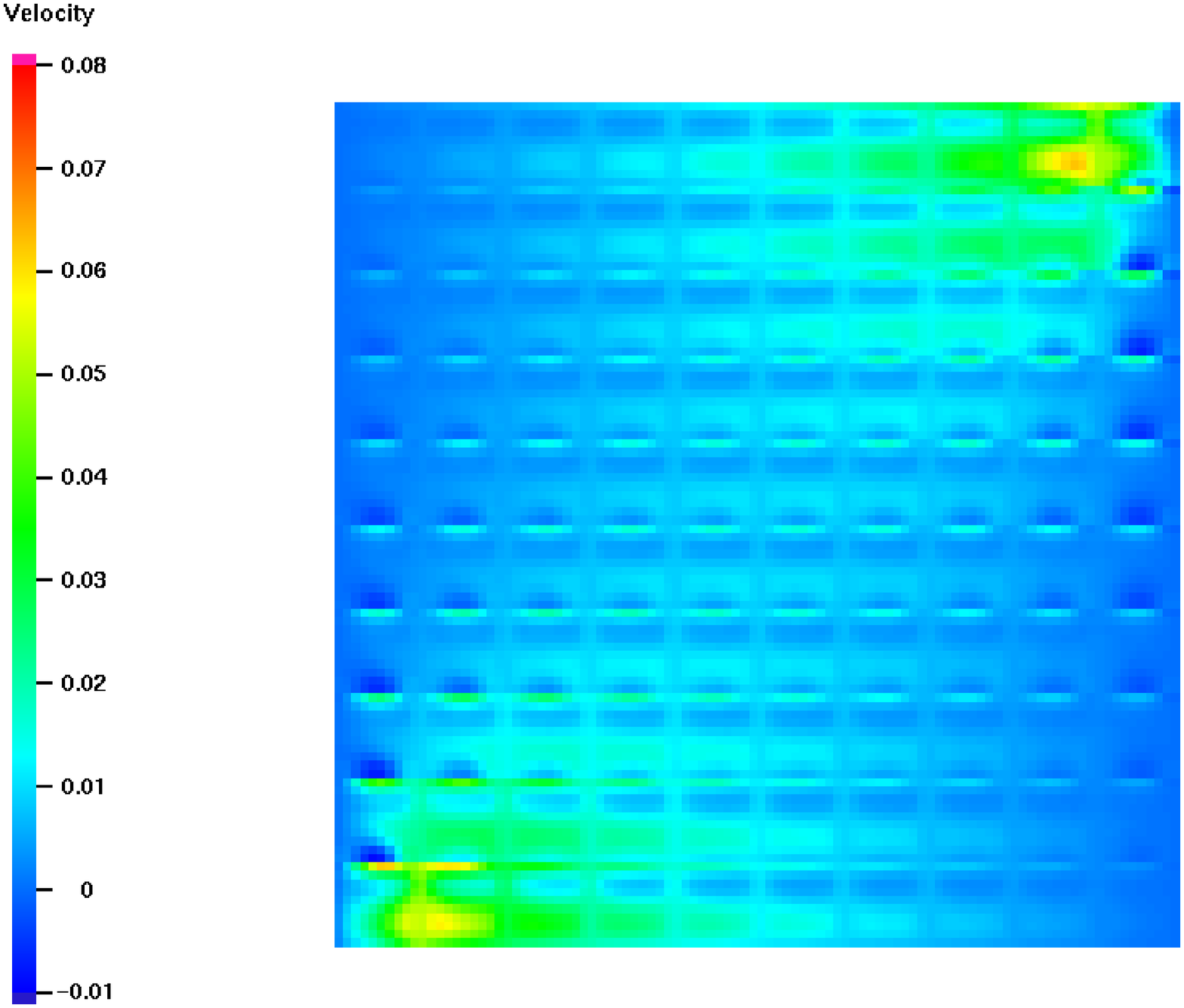}      \hspace*{12pt}
  \includegraphics[height=2in]{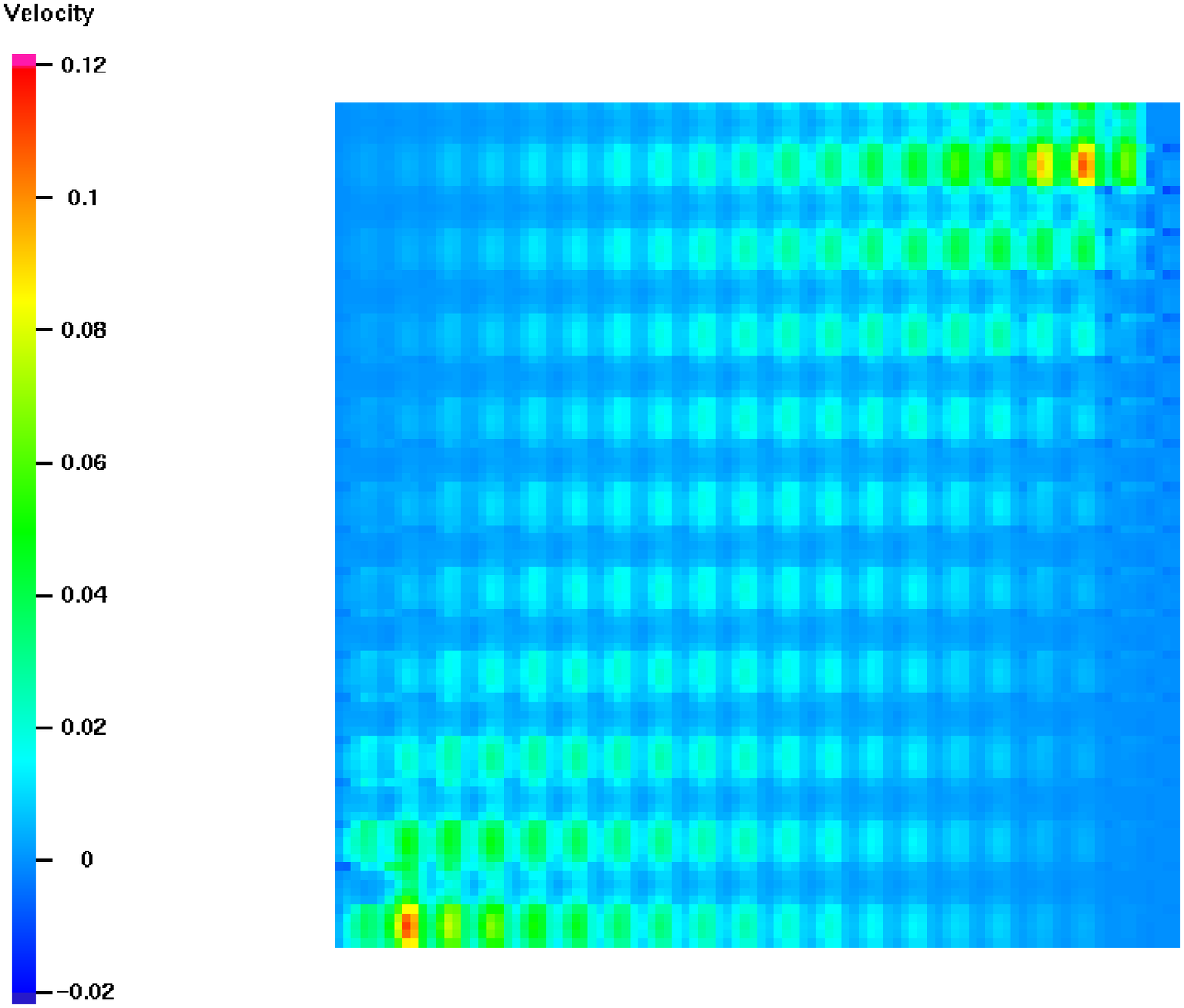}
  \caption{The $x$-component of the velocity for the oscillatory
    permeability field \protect\eqref{eq:oscillatory-perm} is plotted
    for the fine-scale reference solution (top-left), and for three
    expanded mixed MsFEMs.  The solution obtained with the global
    expanded mixed MsFEM on the $20\times 20\times 1$ coarse grid (top
    right) shows excellent agreement with the reference solution.  The
    solutions obtained with the local expanded mixed MsFEM on the $10
    \times 10 \times 1$ coarse grid (bottom left) and the
    $20\times20\times 1$ coarse grid (bottom right) show the impact of
    the resonance errors.}
  \label{lvx10and20}
\end{figure}

\subsection{IMPES with random shales}
\label{sec:ex-impes}

To compare the performance of the local and global expanded mixed MsFEMs for
a two-phase flow problem, we use an Implicit Pressure Explicit
Saturation (IMPES) formulation.  The boundary conditions that we use
to define the global expanded MsFEM basis functions are obtained from
the fine-scale expanded MFEM solution at the initial time.  We note
that in very complex highly varying flows it may be advantageous to
update these basis functions by updating the fine-scale expanded MFEM
solution that is used in their construction.  However, our current
study is focused on the effect of the two different velocity
approximations, and hence, we fix the multiscale bases throughout the
simulation.

\begin{figure}
  \centering
  \includegraphics[height=2.5in, width=5in]{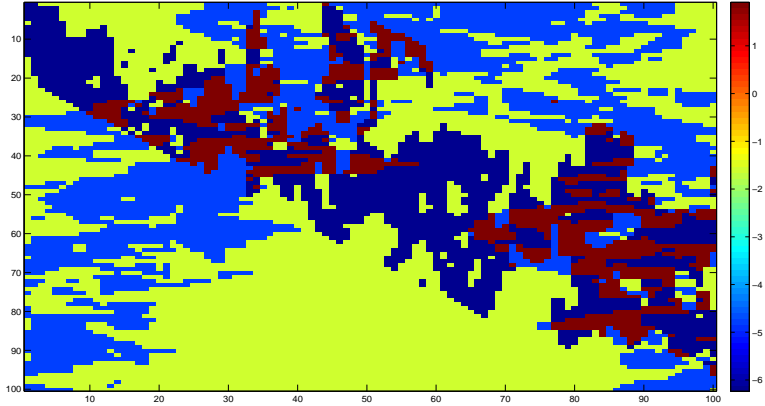}
  \caption{Logarithm of permeability field used in the two-phase flow two-spot problem.}
  \label{logperm}
\end{figure}

\begin{figure}
  \begin{center}
    \includegraphics[width=0.7\linewidth]{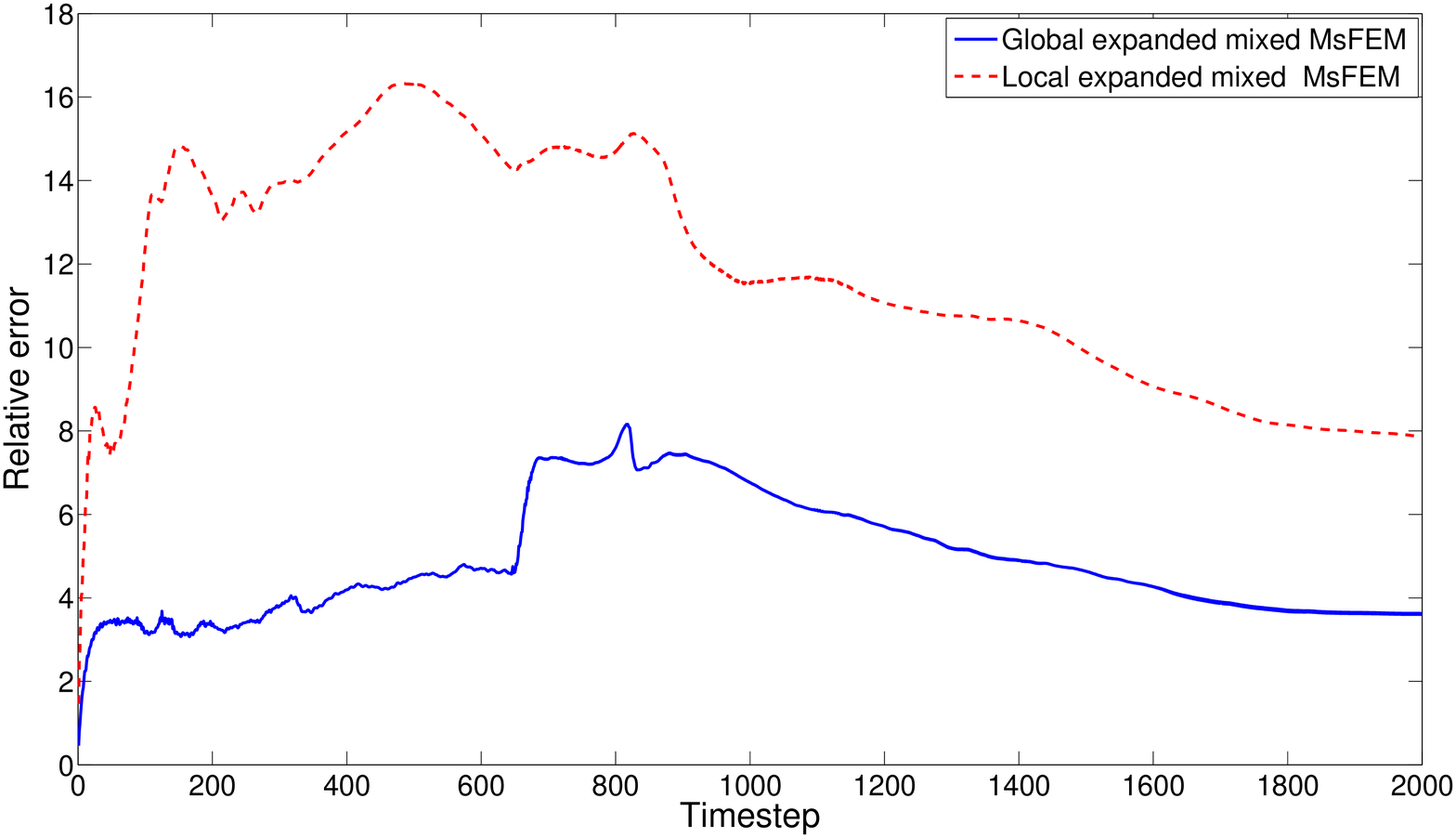}
  \end{center}
  \caption{Relative saturation errors (percentage) at each timestep of
    the IMPES simulation, for global and local expanded mixed MsFEM}
  \label{saterr}
\end{figure}

\begin{figure}
  \begin{center}
  \hspace*{3pt}
  \includegraphics[height=1.48in]{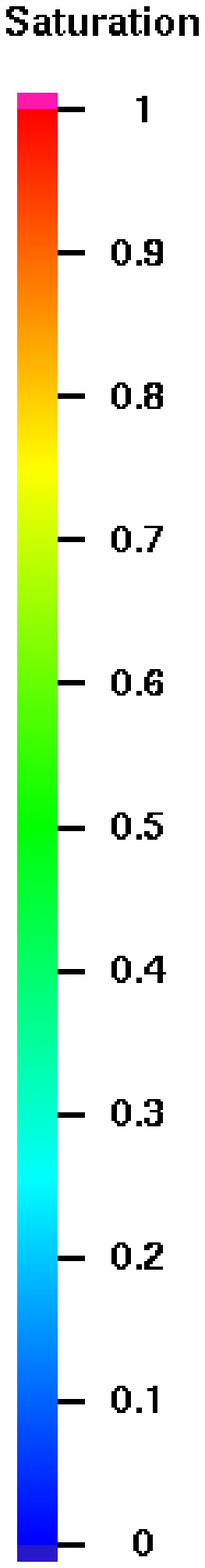}
  \includegraphics[height=1.48in]{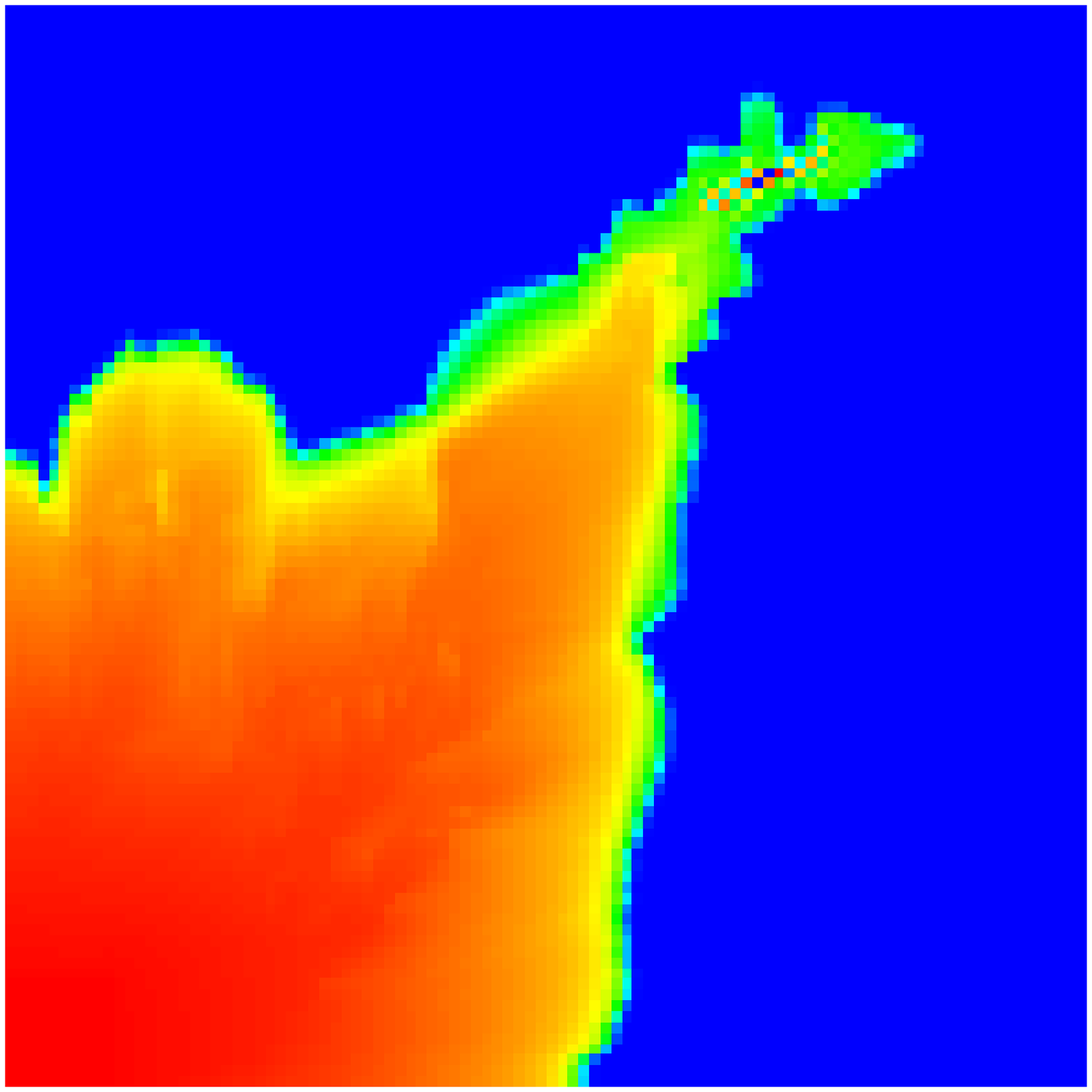}
  \includegraphics[height=1.48in]{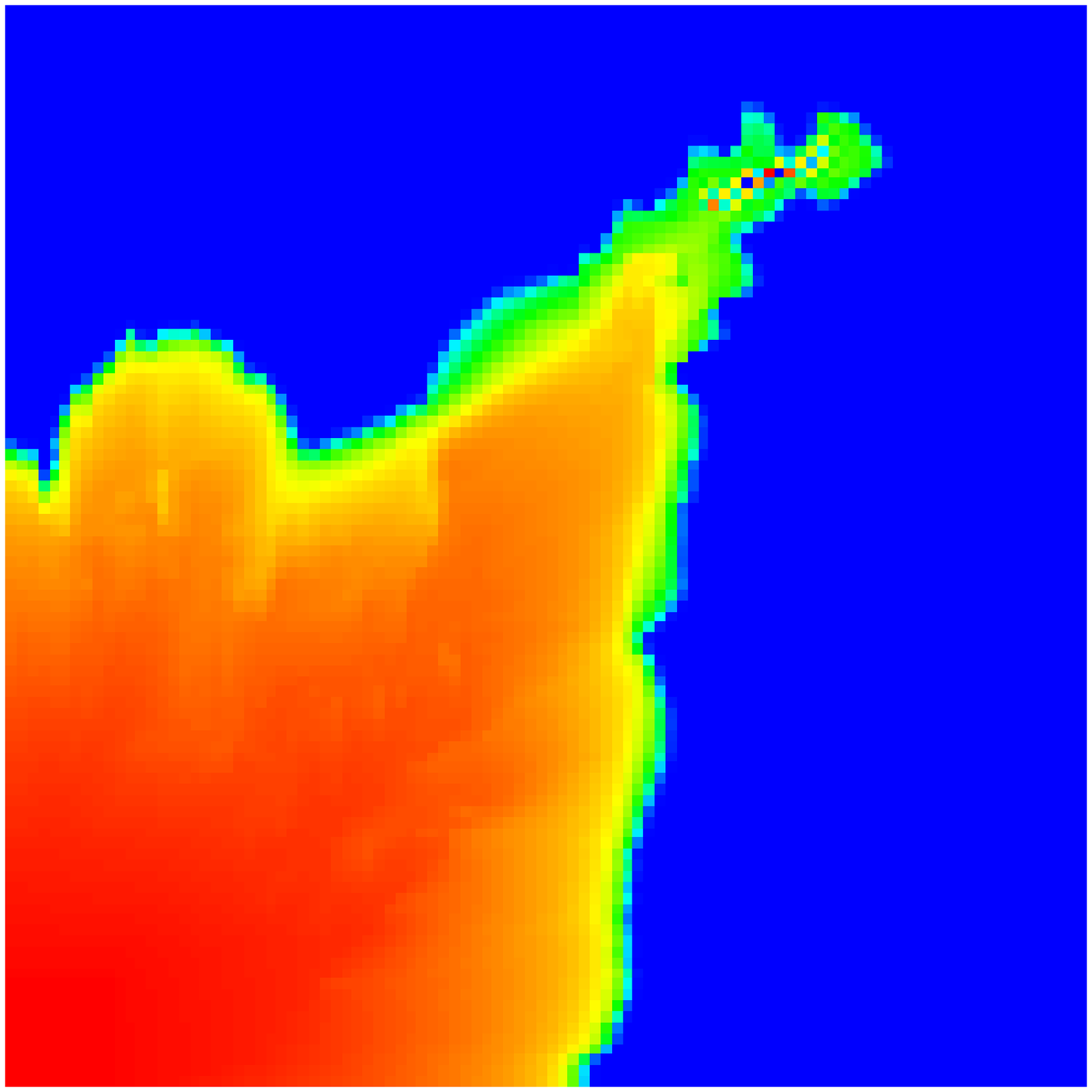}
  \hspace*{3pt}
  \includegraphics[height=1.48in]{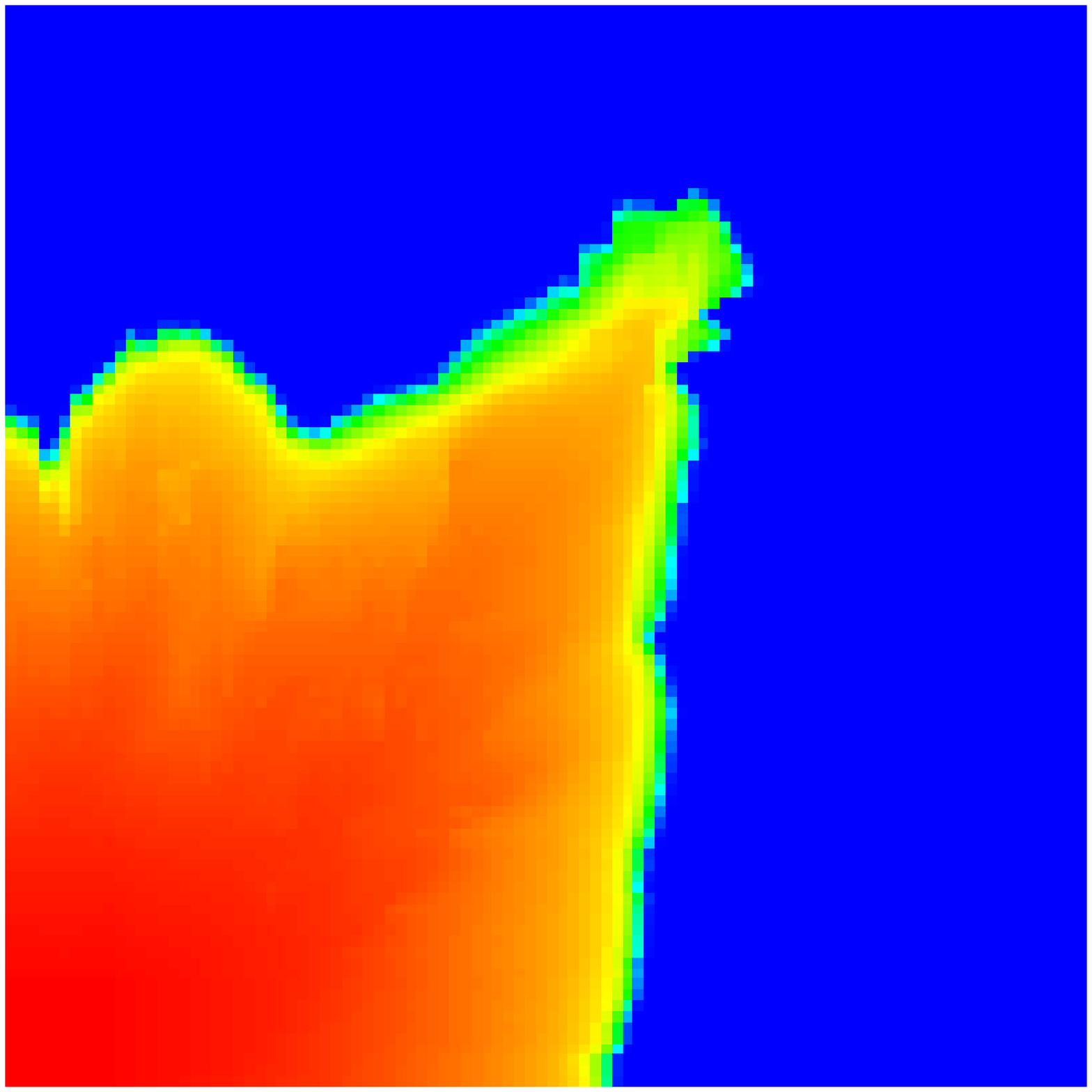}
  \end{center}
  \caption{Water saturation at the 700th timestep computed with the
    fine-scale reference velocity (left); with the coarse global
    expanded mixed MsFEM velocity (middle); and with the coarse local
    expanded mixed MsFEM velocity (right).}
  \label{fsat700}
\end{figure}

The two-phase flow scenario that we consider is the traditional
two-spot problem on a square domain (e.g., \cite{aej08, jennylt03}),
in which the water is injected at the bottom left corner and oil is
produced at the top right corner. Here, we use simple fixed rate wells
that are constant over a coarse cell.  We note that it is possible to
define a well on a fine cell, and capture the influence of the
near-well heterogeneity, by modifying the source term in the
construction of the basis functions.  Similar modifications were
developed to address this issue in the subgrid upscaling method
\cite{arbogast02} and the standard mixed MsFEM \cite{akl06}.
However, fine-scale wells are beyond the scope of this paper.

%For the well models, we note that using the well phase
%pressure and Peaceman coefficient \cite{pea78} can correct the source
%term.  The correction technique allows for consistent handling of the
%source term and remedies the deficiency caused by the coarse wells.
%One can refer to \cite{arbogast02} for the details of the local
%correction technique. For the coarse cells containing a well, a source
%distribution function was used in \cite{akl06} for the associated
%multiscale basis equations to ensure a conservative approximation on
%the fine grid.

This two-spot problem is essentially two dimensional,
with the fine-grid discretization having $h=1/100$ ($100\times 100\times 8$
mesh) and $H=1/10$ or $1/20$.  The logarithm of the permeability
field is plotted in Figure \ref{logperm} and includes
features that are representative of shale barriers and sandy
deposits or channels.  Specifically, the darkest blue cells
have a  permeability of $10^{-6}$, but are masked by the
vanishing function given in \eqref{vanishingperm} on the reference cell
to represent a shale barrier.   In contrast, the red cells have
a permeability of $100$  and represent sandy deposits or channels.
The remainder of the fine-scale cells are assigned a moderate or low
permeability, and are shown as two distinct colors.

The water and oil mobilities are defined simply as the
quadratic functions $\lambda_w (S_w) = \frac{S_w^2}{\mu_w}$ and
$\lambda_o (S_w) = \frac{(1-S_w)^2}{\mu_o}$, respectively. Here,
$\mu_\alpha$ denotes the viscosity of phase $\alpha$, and we take
$\mu_w =0.5$, $\mu_o=1$. The total mobility $\lambda(S_w):=\lambda_w
(S_w)+\lambda_o (S_w)$. Then the pressure equation is given (in the
absence of gravity and capillary effects) by
\[
-div\big(\lambda(S_w)k\nabla p\big)=q.
\]
At each timestep, we use the velocity $u$ computed from the pressure
equation to advance the water saturation $S_w$ by an explicit
discretization of the saturation equation
\begin{equation*}
\frac{\partial S_w}{\partial t}+ div(f_w(S_w)u) = 0, \qquad\text{with }f_w(S_w)=
\frac{\lambda_w(S_w)}{\lambda_w(S_w)+\lambda_o(S_o)}.
\end{equation*}

Figure \ref{saterr} shows a plot of the saturation error in the
$L^2(\Omega)$ norm (relative to the reference IMPES saturation
computed with the fine-scale expanded mixed FEM velocity), at all
$2000$ time steps.  The water saturation, $S_w$, at the $700$th
timestep is plotted in Figure \ref{fsat700} by using the fine-scale
velocity and the global and local expanded mixed MsFEM velocities.
Figure \ref{fsat700} shows that the IMPES solution based on the global
expanded mixed MsFEM velocity remains close to the reference solution
throughout the long simulation, whereas the local expanded mixed MsFEM
velocity results in substantial error very early in the
simulation. The error naturally decreases later in the simulation, when
most of the domain becomes highly saturated with water.

The IMPES experiments discussed above used coarse velocity fields on a
$10\times 10\times 1$ coarse mesh ($H=1/10$), with each coarse cell
discretized by $10\times 10\times 8$ fine cells.  To study the coarse
velocity error behavior, we also computed the coarse velocity on a
$20\times 20\times 1$ coarse mesh, with each coarse cell discretized
by $5\times 5\times 8 $ fine cells. The results are reported in Table
\ref{randomvanish100}. The global expanded mixed MsFEM velocity is
about $25\%$ more accurate on the $20\times 20\times 1$ coarse mesh
than on the $10\times 10\times 1$, while the local expanded mixed
MsFEM velocity improves by less than $2\%$. Without global
information, non-local effects result in large errors even after
refinement.

\begin{table}
\caption{Randomly vanishing permeability without scale separation, $100\times 100\times 8$ fine mesh}
\label{randomvanish100}
\vspace*{-12pt}
\begin{center}
\begin{tabular}{|c|c|c|c|c|}
\hline
& \multicolumn{2}{|c|}{$20\times 20\times 1$ coarse mesh} & \multicolumn{2}{|c|}{$10\times 10\times 1$ coarse mesh} \\
\hline
& Exp. MsFEM & Exp. GMsFEM & Exp. MsFEM & Exp. GMsFEM \\
\hline \hline
Velocity error & $2.184\times 10^{-3}$ & $5.438\times 10^{-4}$ & $2.218\times 10^{-3}$ & $7.137\times 10^{-4}$ \\ \hline
Velocity norm & $6.202\times 10^{-3}$ & $6.202\times 10^{-3}$ & $6.202\times 10^{-3}$ & $6.202\times 10^{-3}$ \\ \hline
$\nabla p$ error & $3.529\times 10^{-3}$ & $1.999\times 10^{-3}$ & $3.636\times 10^{-3}$ & $2.283\times 10^{-3}$ \\ \hline
$\nabla p$ norm & $1.471\times 10^{-2}$ & $1.471\times 10^{-2}$ & $1.471\times 10^{-2}$ & $1.471\times 10^{-2}$ \\ \hline
\end{tabular}
\end{center}
\end{table}

We conclude that the expanded mixed MsFEM method is very effective in
approximating both the pressure gradient and velocity when a locally
vanishing permeability field makes standard mixed MsFEM methods
infeasible.  Practical simulations of two-phase flow demonstrate the
high degree of accuracy attained by using global information in
constructing the coarse basis functions. Thus the expanded mixed
formulation with global information is the most robust and accurate
method considered in this study for applications involving a locally
vanishing permeability field.

\section{Conclusions}

We developed a family of expanded mixed MsFEMs for elliptic equations
and considered their hybrid formulation.  In this formulation, the
four unknowns were solved simultaneously, namely, the pressure,
gradient of pressure, Lagrange multipliers, and velocity.  The
expanded mixed MsFEMs work well for the case that the coefficient
(e.g., permeability) is very small or locally vanishing in some
regions of the domain.  This case has important applications in
reservoir simulation.  In contrast the standard mixed MsFEMs require
the coefficient to be uniformly positive everywhere.  In addition,
expanded mixed MsFEMs provide the gradient of pressure without
significantly increasing the computational cost relative to the standard
method.

We analyzed the expanded mixed MsFEMs for separable scales and
non-separable scales, and established a priori error estimates.
We showed that the convergence rates of the local expanded MsFEMs
depend on both the small physical scales and on the coarse-mesh size.  In
contrast, we showed that the global expanded mixed MsFEMs achieves a
convergence rate that only depends on the coarse-mesh size.  To
support this analysis we applied the expanded mixed MsFEMs to various
models of flow in heterogeneous porous media.  For the models with
non-separable spatial scales, significantly better accuracy was
achieved by using global information to construct the multiscale basis
functions.  Moreover, for the periodic example of separable scales
using global information eliminated the resonance errors observed in
the the local expanded mixed MsFEM.
These numerical results showed the efficiency and robustness of the
developed methods.

\section*{Acknowledgments}
We would like to thank the referees for their comments and suggestions
to improve the paper.

\appendix
\section{proof of Theorem \ref{converg-os-thm}}
\label{app2}

Due to Theorem \ref{equi-thm-os}, problem (\ref{exp-fem-hybrid-os})
is equivalent to problem (\ref{exp-fem-num-os}), and we can use
problem (\ref{exp-fem-num-os}) to estimate the errors for
$((\theta_h^{os}, u_h^{os}), p_h^{os} )$.  By the estimate
(\ref{os-ineq0}), we have
\begin{eqnarray}
\label{os-ineq1}
\begin{split}
&\sum_K \|\big((\se, \ue), \pe\big)-\big((\theta_h^{os}, u_h^{os}), p_h^{os}\big)\|_{X(K)\times Q(K)}\\
&\leq C\big\{ \inf_{(\xi_h, v_h, q_h)\in X_{1,h}^{os}\times X_{2,h}^{os} \times Q_h} \sum_{K} \|((\se, \ue), \pe)-((\xi_h, v_h), q_h)\|_{X(K)\times Q(K)}  \big\}\\
& +\sum_{v_h\in X_{2,h}^{os}\setminus \{0\}} \frac{-(\se, v_h)-\sum_{K} (div(v_h), \pe)}{\sum_K \|v_h\|_{div, K}}\\
&:=I_1 +I_2.
   \end{split}
\end{eqnarray}
We set $q_h=\mathcal{P}_h \pe$, $\xi_h|_K=\nabla \wes|_K$ and
$v_h=\mathcal{I}_h^{os} u^*$ in (\ref{os-ineq1}).
Lemma~\ref{vh-eq-os} shows that $\xi_h\in X_{1,h}^{os}$ and $v_h\in
X_{2,h}^{os}$.  By the Lemma 4.6 in \cite{ch03}, we obtain
\begin{eqnarray}
\label{os-ineq2}
|I_1| \leq C(h+\epsilon) (\|f\|_{1,\Omega}+\|p^*\|_{2,\Omega}) +C({\epsilon \over h}+\sqrt{\epsilon})\|p^*\|_{1,\infty, \Omega}.
\end{eqnarray}
By using the proof of Theorem 2.2 in \cite{ch03}, we estimate the
consistence error $I_2$ by
\begin{eqnarray}
\label{os-ineq3}
|I_2| \leq C(h+\epsilon) (\|f\|_{1,\Omega}+\|p^*\|_{2,\Omega}) +C({\epsilon \over h}+\sqrt{\epsilon})(\|p^*\|_{1,\infty, \Omega}+\|f\|_{0, \Omega}).
\end{eqnarray}

Next we estimate $\|\mathcal{P}_{\partial} \pe
-\lambda_h^{os}\|_{-{1\over 2}, h}$.  Let
$\tilde{\tau}_h:=|e|(\mathcal{P}_{\partial} \pe
-\lambda_h^{os})\bar{\psi}_{\chi_e}^K\in \tilde{X}_{2,h}^{os}(K)$ and
the operator $\mathcal{M}_h$ be defined in (\ref{def-Mh-os}). Then
\[
\mathcal{M}_h (\tilde{\tau}_h)\cdot n_e=\mathcal{P}_{\partial} \pe -\lambda_h^{os} \quad  \text{on $e$ and $0$ otherwise}.
\]
By direct calculation or scaling argument, it follows
\begin{equation}
\label{mult-00-os}
\|\mathcal{M}_h(\tilde{\tau}_h)\|_{0,K} +h_K \|div \mathcal{M}_h (\tilde{\tau}_h)\|_{0, K} \leq C h_K^{1\over 2} \|\mathcal{P}_{\partial} \pe -\lambda_h^{os}\|_{0,e}.
\end{equation}
Define $\tau_h=\tilde{\tau}_h$ in K and $\tau_h=0$ in
$\Omega\setminus$ K.  By the second equation in
(\ref{exp-fem-hybrid-os}), we have
\begin{equation*}
(\theta_h^{os}, \tilde{\tau}_h)_K +(p_h^{os}, div\tilde{\tau}_h)_K=(\mathcal{P}_{\partial} \pe -\lambda_h^{os}, \lambda_h^{os})_e.
\end{equation*}
Since $div\mathcal{M}_h(\tilde{\tau}_h)=div\tilde{\tau}_h$, it follows
that
\begin{equation}
\label{mult-01-os}
(\theta_h^{os}, \tilde{\tau}_h)_K +(p_h^{os}, div\mathcal{M}_h(\tilde{\tau}_h))_K=(\mathcal{P}_{\partial} \pe -\lambda_h^{os}, \lambda_h^{os})_e .
\end{equation}

Since $\se=\nabla \pe$, Green's function gives
\begin{equation}
\label{mult-02-os}
(\se, \mathcal{M}_h(\tilde{\tau}_h))_K +(\pe, div\mathcal{M}_h(\tilde{\tau}_h))_K=(\mathcal{P}_{\partial} \pe -\lambda_h^{os}, \pe)_e.
\end{equation}
By a similar argument to (\ref{inf-sup-eq2}), it follows that
\begin{equation}
  \label{inverse-Mh-bd}
  \|v_h\|_{0,K} \leq C\|\mathcal{M}_h v_h\|_{0,K} \quad \text{for} \quad \forall v_h\in\tilde{X}_{2,h}^{os}(K).
\end{equation}
By using (\ref{mult-01-os}), (\ref{mult-02-os}), (\ref{mult-00-os})
and (\ref{inverse-Mh-bd}), we get
\begin{eqnarray}
\begin{split}
&\|\mathcal{P}_{\partial} \pe -\lambda_h^{os}\|_{0,e}^2 =(\mathcal{P}_{\partial} \pe -\lambda_h^{os}, \mathcal{P}_{\partial} \pe -\lambda_h^{os})_e=( \pe -\lambda_h^{os}, \mathcal{P}_{\partial} \pe -\lambda_h^{os})_e\\
&=(\se-\theta_h^{os}, \tilde{\tau}_h)_K+ (\se, \mathcal{M}_h(\tilde{\tau}_h)-\tilde{\tau}_h)_K   +   (\pe-p_h^{os}, div\mathcal{M}_h(\tilde{\tau}_h))_K\\
&\leq \|\se-\theta_h^{os}\|_{0,K} \|\tilde{\tau}_h\|_{0,K} + \|\se\|_{0,K}( \|\mathcal{M}_h(\tilde{\tau}_h)\|_{0,K} + \|\tilde{\tau}_h\|_{0,K})\\
&+  \|\pe-p_h^{os}\|_{0,K} \|div\mathcal{M}_h(\tilde{\tau}_h)\|_{0,K}\\
&\leq C\|\se-\theta_h^{os}\|_{0,K}  \|\mathcal{M}_h(\tilde{\tau}_h)\|_{0,K}+C\|\se\|_{0,K}\|\mathcal{M}_h(\tilde{\tau}_h)\|_{0,K}+ \|\pe-p_h^{os}\|_{0,K} \|div\mathcal{M}_h(\tilde{\tau}_h)\|_{0,K}\\
&\leq C\big( h_K^{-{1\over 2}}\|\pe-p_h^{os}\|_{0,K}+h_K^{1\over 2} \|\se-\theta_h^{os}\|_{0,K}+h_K^{1\over 2}\|\se\|_{0,K} \big) \|\mathcal{P}_{\partial} \pe -\lambda_h^{os}\|_{0,e},
\end{split}
\end{eqnarray}
which gives
\[
\|\mathcal{P}_{\partial} \pe -\lambda_h^{os}\|_{0,e}\leq C\big( h_K^{-{1\over 2}}\|\pe-p_h^{os}\|_{0,K}+h_K^{1\over 2} \|\se-\theta_h^{os}\|_{0,K} +h_K^{1\over 2}\|\se\|_{0,K}  \big).
\]
Consequently,
\begin{eqnarray}
\label{mult-03-os}
\begin{split}
&\|\mathcal{P}_{\partial} \pe -\lambda_h^{os}\|_{-{1\over 2}, h}^2=\sum_{K\in  \mathfrak{T}_h}  \sum_{e\in \subset \pK} h_K \|\mathcal{P}_{\partial} \pe -\lambda_h^{os}\|_{0,e}^2\\
&\leq C \|\pe-p_h^{os}\|_{0, \Omega}^2 +Ch^2 \|\se-\theta_h^{os}\|_{0, \Omega}^2+Ch^2 \|\se\|_{0,K}.
\end{split}
\end{eqnarray}
Combining (\ref{os-ineq1}), (\ref{os-ineq2}), (\ref{os-ineq3}) and (\ref{mult-03-os}) proves  (\ref{error-fgp-os}).

\section{proof of Theorem \ref{w-lem-g}}
\label{app3}

By straightforward calculation, it follows that
\begin{eqnarray}
\label{eq01-g}
\begin{split}
\begin{cases}
div[\sum_{i=1}^N  \sum_{e\subset \pK}(\int_e A_iu_i\cdot n_eds) (-k\nabla \phi_{i,\chi_e}^K)\big]&=div (\mathcal{I}_h^g u|_K) \quad \text{in $K$}\\
\sum_{i=1}^N \sum_{e\subset \pK}(\int_e A_iu_i\cdot n_eds) (-k\nabla \phi_{i,\chi_e}^K)\cdot n&=(\mathcal{I}_h^g u|_K)\cdot n  \quad \text{on} \quad \pK.
\end{cases}
\end{split}
\end{eqnarray}
Comparing equation (\ref{w-eq-global}) with equation (\ref{eq01-g}), we find that
\[
-k\nabla w=\sum_{i=1}^N  \sum_{e\subset \pK}(\int_e A_iu_i\cdot n_eds) (-k\nabla \phi_{i,\chi_e}^K)=\mathcal{I}_h^g u|_K.
\]
By the uniqueness of solution for (\ref{w-eq-global}), it follows that
\[
\nabla w=\sum_{i=1}^N  \sum_{e\subset \pK}(\int_e A_iu_i\cdot n_eds)\eta_{i, \chi_e}^K\in X_{1,h}^g(K) .
\]

%%%%%%%%%%%%%%%%%%%%%%%%%%

\end{document}